\documentclass[11pt,letterpaper]{article}
\usepackage[utf8]{inputenc}
\usepackage{tikz}
\usepackage{bm}
\usepackage{mathtools}
\usepackage{tikz-cd}
\usepackage{mathrsfs}
\usepackage{cite}
\usepackage{amsmath}
\numberwithin{equation}{section}
\usepackage{hyperref}
\usepackage{lineno}
\usepackage{mathrsfs}
\usepackage{amsfonts}
\usepackage{amsthm}
\usepackage{amssymb}
\theoremstyle{definition}
\newtheorem{define}{Definition}[section]
\newtheorem{example}[define]{Example}
\newtheorem{construction}{Construction}[section]
\theoremstyle{remark}
\newtheorem{remark}[define]{Remark}
\theoremstyle{plain}
\newtheorem{theo}[define]{Theorem}
\newtheorem{lemma}[define]{Lemma}
\newtheorem{prop}[define]{Proposition}
\newtheorem{cor}[define]{Corollary}
\newcommand{\X}{\mathscr X}
\newcommand{\C}{\mathscr C}
\newcommand{\E}{\mathscr E}
\newcommand{\G}{\mathscr G}
\newcommand{\A}{\mathscr A}
\newcommand{\B}{\mathscr B}
\newcommand{\F}{\mathscr F}
\newcommand{\D}{\mathscr D}

\newcommand{\K}{\mathfrak K}

\newcommand{\cat}{\mathbf{Cat}}
\newcommand{\twocat}{\mathbf{2Cat}}
\newcommand{\elt}{\mathbf{Elt}}

\newcommand{\dom}{\mathrm{dom}}
\newcommand{\cod}{\mathrm{cod}}
\newcommand{\src}{\mathrm{src}}
\newcommand{\tgt}{\mathrm{tgt}}
\newcommand{\sq}{\mathbf{Sq}}
\newcommand{\lax}{\mathbf{Lax}}

\makeatletter
\newcommand{\laxlim@}[2]{%
  \vtop{\m@th\ialign{##\cr
    \hfil$#1\operator@font lax$\hfil\cr
    \noalign{\nointerlineskip\kern1.5\ex@}#2\cr
    \noalign{\nointerlineskip\kern-\ex@}\cr}}%
}
\newcommand{\laxlim}{%
  \mathop{\mathpalette\laxlim@{\leftarrowfill@\textstyle}}\nmlimits@
}
\makeatother
\makeatletter
\newcommand{\laxcolim@}[2]{%
  \vtop{\m@th\ialign{##\cr
    \hfil$#1\operator@font lax$\hfil\cr
    \noalign{\nointerlineskip\kern1.5\ex@}#2\cr
    \noalign{\nointerlineskip\kern-\ex@}\cr}}%
}
\newcommand{\laxcolim}{%
  \mathop{\mathpalette\laxcolim@{\rightarrowfill@\textstyle}}\nmlimits@
}
\makeatother
\usepackage{amssymb}
\usepackage[left=2cm,right=2cm,top=2cm,bottom=2cm]{geometry}
\author{Michael Lambert}
\title{Discrete 2-Fibrations}
\begin{document}
\maketitle

\begin{abstract}
This paper is concerned with developing a 2-dimensional analogue of the notion of an ordinary discrete fibration.  A definition is proposed, and it is shown that such discrete 2-fibrations correspond via a 2-equivalence to certain category-valued 2-functors.  The ultimate goal of the paper is to show that discrete 2-fibrations are 2-monadic over a slice of the 2-category of categories.
\end{abstract}

\tableofcontents

\section{Introduction}

This opening section starts off by summarizing some motivation and background on categorifying the notion of a discrete fibration.  Some specifics and notation are then recalled on fibrations, cleavages, discrete fibrations and the representation theorems relating fibrations to certain category-valued functors.  Finally, a brief breakdown on the various sections of the paper is given.

\subsection{Introductory Discussion}

This paper is concerned with identifying a notion of ``discrete 2-fibration," categorifying in dimension 2 the idea of a discrete fibration.  Roughly, a discrete fibration is a functor having a lifting property with respect to morphisms in the target category whose codomains are strictly in the image of the functor.  The main representation theorem for discrete fibrations over a fixed base category is that they correspond via an equivalence of categories to set-valued contravariant functors on the base category.  In some ways the point is that the uniqueness clause of the lifting property causes the ``fibers" of any discrete fibration to be sets rather than categories with potentially non-trivial morphisms.  Taking these sets as the values of the corresponding functor gives one direction of the equivalence, whereas the other direction is given by a canonical ``category of elements" construction.  It is a version of these correspondences that this paper seeks to work out in dimension 2.  The only immediate clue for a starting point is that the category of sets is the base structure for category theory.  Thus, roughly speaking, insofar as the 2-category of categories is viewed as a base structure for 2-categories, the idea is to identify the fibration concept corresponding to contravariant category-valued 2-functors indexed by a base 2-category. 

One important question, however, concerns the way in which such a 2-fibration concept would be considered discrete.  For recall that discrete fibrations are, from one point of view, a fragment of a theory of fibrations.  Fibrations are functors with lifing properties via special ``cartesian morphisms" whose fibers are not necessarily just sets.  The representation theorem is that certain fibrations with specified or canonically chosen cartesian morphisms over a fixed base correspond via an equivalence of 2-categories to certian contravariant category-valued functors on the base.  Every discrete fibration is a (split) fibration and the representation theorem for discrete fibrations is a fragment of that for (split) fibrations.  Thus, just as discrete fibrations are fibrations and the representation theorem for the latter is a special case of the result for the former, we should plan to isolate a notion of discrete 2-fibration such that at least: every discrete 2-fibration is some kind of 2-fibration; and any representation result for the former is canonically a special case of one for the former.  

Thankfully the literature on 2-fibrations gives a well-established starting place that turns out to fit into a higher-dimension version of the pattern suggested above.  The notion of 2-fibration considered here was introduced in a form in \cite{Hermida} and reworked in \cite{Buckley} to give a representation theorem of the following form: namely, that 2-fibrations over a fixed base 2-category, so defined, correspond via an equivalence of 3-categories to contravariant 2-category-valued 2-functors on the base 2-category.  Thus, our initial idea that discrete 2-fibrations should correspond to category-valued functors will fit the pattern suggested at the lower level.  For at the low level (1) sets are locally discrete with respect to categories; (2) the category of sets is a base structure for categories; and (3) the 2-category of categories is an ambient forum for category theory.  The categorification will have (1)' categories are locally discrete with respect to 2-categories: (2)' the category of categories is a base structure for 2-categories; and (3)' the 3-category of 2-categories is an ambient forum for 2-category theory.  Thus, given that discrete 2-fibrations should correspond to category-valued functors on a 2-category, the question is: what aspect of the definition of 2-fibration needs to be adjusted to ensure this discreteness relative to 2-categories?

Roughly speaking, the definition of a 2-fibration from the references is that it is a 2-functor with fibration-like lifting properties of both arrows and 2-cells via certain specified cartesian morphisms, but that is also a split (op)fibration locally.  The correspondence for a representation theorem is achieved, as in the lower-dimensional case, by a category of elements construction and a suitable pseudo-inverse, explicitly constructed in \cite{Buckley}.  Now, our examination of the fibration properties of the category of elements construction, when applied to category-valued 2-functors, reveals that the canonical projection from the category of elements is essentially a 2-fibration which is locally a discrete (op)fibration.  It turns out that asking a 2-functor to be a split fibration and locally a discrete (op)fibration is precisely what gives the discrete 2-fibration concept and the restricted equivalence that it is the first object of this paper to exhibit.  Exhibiting this correspondence directly and showing how the discreteness assumption is used throughout is the content of \S 3 of the paper.

One technical point should be noted at this stage.  This is that the representation correspondences, as developed here, treat the notion of contravariance in dimension 2 as the dual on 1-cells, but not necessarily on 2-cells.  The reference \cite{Buckley} considers functors on the ``$coop$" dual to correspond to the notion of 2-fibration, resulting in a definition of 2-fibration as having the fibration-like lifting property globally and being a split fibration locally.  Owing to the fact that the canonical representable functors for any object in a fixed 2-category are not defined on the 2-dimensional `$co$' dual, we view the notion of contravariance as defined only on the ``$op$" dual, resulting in the definition of 2-fibration as being a split opfibration locally instead.  Perhaps this convention is ultimately unjustified.  However, the importance of the canonical representable functors is, to us, highlighted by the fact that they act as units for what appears to be a tensor product of category-valued 2-functors as constructed in a previous paper \cite{Me2}.  This construction boosts the 1-categorical tensor product of presheaves into the 2-categorical setting by exhibiting a category computing the colimit of a category-valued functor on a 2-category with a given weight 2-functor that formally resembles a  tensor-hom adjunction.  Just as the canonical representables are units for the tensor product in dimension 1, the canonical representable 2-functors are  units for the tensor in dimension 2.  Just as 1-dimsional representables correspond to certain (op)fibrations via the representation theorems for discrete (op)fibrations, we think the 2-dimensional representables should correspond to 2-(op)fibration in the higher version of this duality.

These considerations bring us to discuss the second overall objective of the paper.  Discrete fibrations over a fixed base category are well-known to be monadic over a certain slice of the category of sets.  The importance of this result shows up in the following development.  That is, the tensor product extension of a presheaf along the Yoneda embedding is left exact if, and only if, the associated category of elements is filtered (see Ch VII of \cite{MM} for a textbook account).  R. Diaconescu gave an elementary version of this result in \cite{DiaconescuThesis} and \cite{DiaconescuChangeOfBase} replacing the category of sets by an arbitrary topos and working in internal category theory.  Of course the idea of a ``base-valued functor" really has no strict internal analogue.  However, presheaf data can be captured by looking at the corresponding algebras, which turn out to be just morphisms of the topos over a fixed base admitting a certain action from the arrows of the base category.  In this way, Diaconescu gave an elementary version of the filtering axioms and showed that a given algebra is filtered if, and only if, its internal tensor product extension is left exact.  Given that there is known to be a tensor product of category-valued 2-functors, the question is about the nature of filtering conditions on the corresponding 2-category of elements that are equivalent to the exactness of the tensor product extension.  This question was answered in \cite{DDS} where a notion of 2-filteredness was proposed.  Any ``elementary" version of the results obtained there in some kind of 2-topos along the pattern of Diaconescu's work would first require an repackaging of the base-valued 2-functor as some kind of algebra, ready for elementary generalization.  It is the second overall goal of this paper to show that the concept of discrete 2-fibration proposed here is 2-monadic over a slice of the 2-category of categories.  This notion appears in \S 4 and could be appropriate for elementary axiomatization.

Some subtleties are encountered in this development, however.  Fibrations over a fixed base category were shown \cite{GrayFiberedCofibered} to be algebras for a monad given by an action of the cotensor of the base category with the two-element ordinal category, that is, an action of the ``arrow category" of the base category.  Since a discrete 2-fibration is at least a split fibration, we thus expect the action to be from some kind of 2-dimensional cotensor or arrow 2-category constructed from the base.  There is of course such an arrow 2-category, consisting of arrows, commutative squares and pairs of certain 2-cells satisfying a compatibility condition, directly adding higher structure to the ordinary arrow category in dimension 1.  This is the cotensor in the 3-category of 2-categories, 2-functors, 2-natural transformations and modifications.  However, because a discrete 2-fibration is also locally a discrete opfibration, it admits an action from a more general structure, namely a ``lax arrow category" where the commutative squares are replaced by squares with not just an isomorphism but instead a mere 2-cell pointing in the correct direction.  And indeed it turns out that to obtain a discrete 2-fibration from a functor admitting an action of some kind of 2-categorical arrow structure, both the commutative squares and the globular structure of the base 2-category are required.  For the squares give the transition functors and the globular structure gives the transition 2-cells.  It appears that the only way to pack all this information into a 1-category is to take the underlying 1-category of this ``lax arrow category."  One expects, then, that 2-fibrations are precisely the algebras for actions the whole 2-category and not just the underlying structure.

The peculiarity of this development is that the lax arrow category is not a cotensor in 2-categories.  For it is universal not among 2-natural transformations but among certain \textit{lax} natural transformations.  Accordingly, a good portion of the paper is devoted to building up a 3-dimensional ambient categorical structure to describe this universality, namely, a setting whose objects are 2-categories, whose morphisms are 2-functors, whose 2-cells are lax natural transformations, and whose 3-cells are modifications.  This can be obtianed as a category enriched in the 1-category of 2-categories and lax functors.  Then the lax arrow category enjoys a universal property in this setting very much like the ordinary  cotensor in more familiar 2- and 3-categories.  It is thus this 3-categorical setting that we anticipate is an appropriate ambient forum for studying 2-fibrations and discrete 2-fibrations.  In fact, this is already suggested by the theory of ordinary fibrations.  For the category of elements construction associated to a category-valued functor fits into a certain lax comma square; moreover taking the strict comma square in this situation would result in asking for strict equalities instead of arbitrary vertical morphisms, which is first of all not the usual construction and second generally regarded as ``evil" since equality in categories is not invariant under equivalence.  The question, then, is whether there is a more general 2-fibration concept to be obtained in this setting as a result of considering the lax structure.  For now, however, we merely introduce the setting and stick to the notions compatible with the existing literature.

\subsection{Fibrations and Discrete Fibrations}

Here are recalled some specifics and notation on ordinary fibrations and discrete fibrations.  From these specifics, we can set out the desiderata guiding the results of the paper.  Throughout let $\C$ denote a small category.  Recall first the following standard definition.

\begin{define} \label{discr fibn defn} A \textbf{discrete fibration} over $\C$ is a functor $F\colon \F\to\C$ such that for each morphism $f\colon C\to FX$ with $ X\in \F$, there is a unique morphism $Y\to X$ of $\F$ above $f$.  A functor $E\colon \E\to\C$ is a discrete opfibration if $E^{op}$ is a discrete fibration.  A morphism of discrete fibrations $F\colon \F\to\C$ and $G\colon \G\to\C$ is a functor $H\colon \F\to\G$ such that $GH=F$ holds.  Let $\mathbf{DFib}(\C)$ denote the category of discrete fibrations over $\C$ and $\mathbf{DOpf}(\C)$ denote the category of discrete opfibrations over $\C$.
\end{define}

For each set-valued functor $F\colon \C^{op}\to\mathbf{Set}$, there is an associated category of elements, or ``Grothendieck semi-direct product," detailed for example in \S II.6 and \S III.7 of \cite{MacLane}, yielding a discrete fibration
\[ \Pi\colon \elt(E)\to \C.
\]
The source category has as objects pairs $(C,x)$ with $C\in \C_0$ and $x\in FC$ and as morphisms $(C,x) \to (D,y)$ those morphisms $f\colon C\to D$ of $\C$ with $x =Ff(y)$.

\begin{theo}[Representation Theorem I] \label{disc fib set-functor duality thm} The category of elements construction is one half of an equivalence of categories
\[ \mathbf{DFib}(\C)\simeq [\C^{op},\mathbf{Set}]. 
\]
between discrete fibrations and presheaves on $\C$.
\end{theo}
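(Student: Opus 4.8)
The plan is to construct an explicit functor in each direction and verify they are mutually quasi-inverse. The excerpt has already given us the category of elements construction $\elt(-)$, so one direction is fixed: a presheaf $F\colon \C^{op}\to\mathbf{Set}$ is sent to the projection $\Pi\colon \elt(F)\to\C$, which I must first confirm is genuinely a discrete fibration. This is a direct check: given a morphism $f\colon C\to \Pi(D,y)=D$ in $\C$ with $(D,y)\in\elt(F)$, the unique lift is forced to be the morphism $f\colon (C, Ff(y))\to (D,y)$, since any object in $\elt(F)$ over $C$ is a pair $(C,x)$ and the defining condition $x=Ff(y)$ pins down $x$ uniquely. On morphisms of presheaves, a natural transformation $\alpha\colon F\Rightarrow G$ induces $H\colon\elt(F)\to\elt(G)$ by $(C,x)\mapsto(C,\alpha_C(x))$, and one checks $\Pi_G H=\Pi_F$ and functoriality in $\alpha$.

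For the reverse direction, I would send a discrete fibration $F\colon\F\to\C$ to the presheaf $\Phi(F)$ whose value at $C$ is the fiber $\Phi(F)(C) := F^{-1}(C)$, the set of objects of $\F$ lying strictly over $C$ (discreteness guarantees this fiber is a mere set, with no nontrivial endomorphisms, because any vertical morphism must be the unique lift of an identity and hence an identity). The action on a morphism $f\colon C\to D$ is the transition map $F^{-1}(D)\to F^{-1}(C)$ sending an object $X$ over $D$ to the domain of the unique lift of $f$ with codomain $X$; functoriality (that $\Phi(F)(g\circ f)=\Phi(F)(f)\circ\Phi(F)(g)$ and that identities go to identities) follows from uniqueness of lifts, since both composite lifts solve the same lifting problem. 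A morphism $H\colon\F\to\G$ over $\C$ restricts to each fiber and thereby induces a natural transformation $\Phi(F)\Rightarrow\Phi(G)$.

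The remaining work is to exhibit natural isomorphisms $\elt\circ\Phi\cong\mathrm{id}_{\mathbf{DFib}(\C)}$ and $\Phi\circ\elt\cong\mathrm{id}_{[\C^{op},\mathbf{Set}]}$. The second composite is essentially an identity up to canonical iso: $\Phi(\elt(F))(C)=\Pi^{-1}(C)=\{(C,x):x\in FC\}$, which is canonically bijective with $FC$, and this bijection is natural in both $C$ and $F$. For the first composite, given a discrete fibration $F\colon\F\to\C$, I would build a comparison functor $\elt(\Phi(F))\to\F$ sending $(C,X)$ (where $X\in F^{-1}(C)$) to $X\in\F$, and a morphism $(C,X)\to(D,Y)$ over $f$ to the unique lift of $f$ ending at $Y$. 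Checking this is a well-defined functor, compatible with the projections to $\C$, fully faithful, and essentially surjective is where the bulk of the bookkeeping lies; I expect fullness and faithfulness to be the main obstacle, since it requires reconstructing an arbitrary morphism of $\F$ from its image $f$ in $\C$ together with its codomain, which again leans entirely on the uniqueness clause of the lifting property. Once the comparison is shown to be an isomorphism of categories natural in $F$, the equivalence follows.
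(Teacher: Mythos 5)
Your proposal is correct and follows the same route as the paper, which simply notes that the pseudo-inverse sends a discrete fibration to its fiber presheaf $C\mapsto F^{-1}(C)$ (discreteness of the fibers coming from the uniqueness clause) and leaves the verifications implicit; you have supplied exactly those omitted verifications. One minor reassurance: the comparison $\elt(\Phi(F))\to\F$ you worry about is in fact bijective on objects and on morphisms by the uniqueness of lifts, so fullness and faithfulness pose no real obstacle.
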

\begin{proof}  The pseudo-inverse sends a discrete fibration $F\colon \F\to \C$ to the functor $\C^{op} \to\mathbf{Set}$ whose action on $C\in \C_0$ is to take the fiber of $F$ above $C$.  Notice that these fibers must be discrete categories by the uniqueness assumption.  \end{proof}

\begin{remark}[Desiderata 1] \label{desiderata disc fib duality theorem} It is the result of Theorem \ref{disc fib set-functor duality thm} that this paper seeks to categorify.  Thus, needed is a category of elements construction and an appropriate pseudo-inverse construction.  The developments of \S 2 in \cite{Buckley} will be adapted for this purpose.  The main result is given as Theorem \ref{theorem duality for disc 2-fibrations}.  Needed is the half-step categorification of the idea of a discrete fibration, namely, that of a fibration.
\end{remark}

\begin{define}  A functor $F\colon \F \to \C$ is a \textbf{fibration} if for each $x\colon X\to FA$ there is an arrow $f\colon B\to A$ of $\F$ such that $Ff=x$ and having the property that whenever $h\colon C\to A$ makes a commutative triangle $xu=Fh$ as below there is a unique $F$-lift $\hat u\colon C\to B$ over $u$ making a commutative triangle in $\F$ as indicated in the following picture
$$\begin{tikzpicture}
\node(1){$C$};
\node(2)[node distance=.8in, below of=1]{$B$};
\node(3)[node distance=1in, right of=2]{$A$};
\node(4)[node distance=2.4in, right of=1]{$FC$};
\node(5)[node distance=.8in, below of=4]{$X$};
\node(6)[node distance=1in, right of=5]{$FA$};
\draw[->,dashed](1) to node[left]{$\hat u$}(2);
\draw[->,bend left](1) to node[above]{$h$}(3);
\draw[->](2) to node[below]{$f$}(3);
\draw[->](4) to node[left]{$u$}(5);
\draw[->,bend left](4) to node[above]{$Fh$}(6);
\draw[->](5) to node[below]{$Ff=x$}(6);
\end{tikzpicture}$$
Such a morphism $f$ is \textbf{cartesian} over $x$.  A morphism of $\F$ is $F$-vertical if its image under $F$ is an identity.  The \textbf{fiber} of $F$ over an object $C\in \C$ is the subcategory of $\F$ of objects and vertical morphisms over $C$ via $F$.  A functor $E\colon \E\to\C$ is an \textbf{opfibration} if $E^{op}$ is a fibration; in this case the morphisms of $\E$ having the special lifting property are called ``opcartesian."
\end{define}

A cleavage $\phi$ for a fibration specifies a cartesian morphism in $\F$ for each such $f\colon X\to F A$ in $\C$.  Denote the chosen cartesian morphism by $\phi(f, A)\colon f^*A\to A$.  A fibration with a cleavage is said to be ``cloven.''  Notice that each discrete fibration is a cloven fibration.  An opfibration with chosen opcartesian morphisms is said to be ``opcloven" or to be equipped with an ``opcleavage."  A chosen opcartesian morphism above $f\colon FA\to X$ is denoted by $\phi(f,A)\colon A\to f_!A$.

\begin{remark} \label{cleavage not functorial}  In general a cleavage $\phi$ for a fibration $F\colon \F\to\C$ need not be functorial.  That is, given composable arrows $f\colon X\to Y$ and $g\colon Y \to FB$ of $\C$, there is a diagram of chosen cartesian arrows in $\F$ of the form
$$\begin{tikzpicture}
\node(1){$f^*g^*B$};
\node(2)[node distance=1.4in, right of=1]{$g^*B$};
\node(3)[node distance=.8in, below of=1]{$(gf)^*B$};
\node(4)[node distance=.8in, below of=2]{$B.$};
\node(5)[node distance=.7in, right of=1]{$$};
\node(6)[node distance=.4in, below of=5]{$=$};
\draw[->](1) to node [above]{$\phi(f,g^*B)$}(2);
\draw[->,dashed](1) to node [left]{$\cong$}(3);
\draw[->](2) to node [right]{$\phi(g,B)$}(4);
\draw[->](3) to node [below]{$\phi(gf,B)$}(4);
\end{tikzpicture}$$
The dashed arrow exists since a composition of cartesian morphisms is again cartesian.  It is an isomorphism by the uniqueness aspect of the definition.  But in general this isomorphism is not an identity.  When every such isomorphism is an identity, the fibration $F\colon \F\to\C$ is said to be split.  The difference between cloven and split fibrations over a base category is essentially the difference between category-valued pseudo-functors and 2-functors indexed by the base.
\end{remark}

In this paper, only split fibrations will be considered.  Thus, let $\mathbf{Fib}(\C)$ denote the 2-category of split fibrations over $\C$ with splitting-preserving functors as morphisms and natural transformations with vertical components as the 2-cells.  Dually, $\mathbf{Opf}(\C)$ is the 2-category of split opfibrations over $\C$ with appropriate morphisms and 2-cells.

Now, start with a 2-functor $F\colon \C^{op}\to\mathbf{Cat}$.  Denote the image of $f\colon C\to D$ in $\C$ by $f^*\colon ED\to EC$.  As in the discrete case, there is an associated fibration arising as a category of elements construction
\[ \Pi \colon \elt(F) \to \C.
\]
The source category has objects pairs $(C,X)$ with $X\in FC$ and as morphisms $(C,X) \to (D,Y)$ pairs $(f,u)$ where $f\colon C\to D$ and $u\colon X\to f^*Y$ is a morphism of $FC$.  Units and composition are well-known, but described for example in \S B1.3 of \cite{Elephant}.

\begin{theo}[Representation Theorem II] \label{opfibration psdfunctor correspondence} The category of elements construction is one-half of an equivalence of 2-categories
\[ \mathbf{Fib}(\C) \simeq [\C^{op},\mathbf{Cat}]
\]
between split fibrations over $\C$ and contravariant category-valued 2-functors on $\C$.
\end{theo}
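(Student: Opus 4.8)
The plan is to exhibit the two 2-functors witnessing the equivalence and then the comparison isomorphisms. In one direction, the category of elements gives $\elt\colon [\C^{op},\mathbf{Cat}]\to\mathbf{Fib}(\C)$; in the other, a fiber-taking construction $\Phi\colon\mathbf{Fib}(\C)\to[\C^{op},\mathbf{Cat}]$. First I would verify that $\Pi\colon\elt(F)\to\C$ really is a \emph{split} fibration. Given $(D,Y)\in\elt(F)$ and $f\colon C\to D$ in $\C$, the cartesian lift is the pair $(f,1_{f^*Y})\colon (C,f^*Y)\to(D,Y)$; its universal property follows directly from the construction of morphisms in $\elt(F)$ as pairs $(f,u)$ with $u\colon X\to f^*Y$. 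Because $F$ is a \emph{strict} 2-functor, one has $f^*g^*=(gf)^*$ and $1^*=1$ on the nose, so the chosen cartesian lifts compose strictly and the resulting cleavage is split (this is exactly the content of Remark~\ref{cleavage not functorial}: strictness of $F$ kills the coherence isomorphisms).

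Next I would define the pseudo-inverse $\Phi$. For a split fibration $F\colon\F\to\C$, set $\Phi(F)(C)$ to be the fiber $\F_C$, and send $f\colon C\to D$ to the transition functor $f^*\colon\F_D\to\F_C$ determined by the splitting. Splitting-preservation of morphisms and the strict compatibility $(gf)^*=f^*g^*$ of the chosen lifts guarantee that $\Phi(F)$ is a genuine 2-functor $\C^{op}\to\mathbf{Cat}$ rather than merely a pseudofunctor. The round-trip $\Phi\circ\elt\cong 1$ is essentially immediate: the fiber of $\Pi\colon\elt(F)\to\C$ over $C$ has objects $(C,X)$ with $X\in FC$ and vertical morphisms $(1_C,u)$, which is precisely $FC$, and the transition functors agree, so this composite is (naturally isomorphic to) the identity. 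The substantive round-trip is $\elt\circ\Phi\cong 1$: I would build a comparison functor $\elt(\Phi(F))\to\F$ over $\C$ sending $(C,X)\mapsto X$ and sending a morphism $(f,u)\colon(C,X)\to(D,Y)$, where $u\colon X\to f^*Y$ is vertical, to the composite $X\xrightarrow{u}f^*Y\xrightarrow{\phi(f,Y)}Y$ lying over $f$. That this is an isomorphism of split fibrations rests on the canonical factorization of every arrow of $\F$ as a vertical arrow followed by a chosen cartesian arrow, with the factorization unique by the universal property of cartesian morphisms.

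Finally I would extend both constructions over the 1- and 2-cells and check 2-functoriality. A 2-natural transformation $\alpha\colon F\Rightarrow G$ induces $\elt(\alpha)(C,X)=(C,\alpha_C X)$ and $\elt(\alpha)(f,u)=(f,\alpha_C u)$; here strict 2-naturality $\alpha_C f^*=f^*\alpha_D$ is exactly what makes $\alpha_C u\colon\alpha_C X\to f^*\alpha_D Y$ a legitimate morphism of $\elt(G)$, and the image functor is splitting-preserving by construction. A modification $\Xi\colon\alpha\Rrightarrow\beta$ yields a vertical natural transformation with component $(1_C,(\Xi_C)_X)$ at $(C,X)$, which lands among the admissible 2-cells of $\mathbf{Fib}(\C)$ precisely because its components are $\Pi$-vertical. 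The assignments in the reverse direction, reading off fiberwise components and whiskering by the splitting, are strictly analogous.

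I expect the main obstacle to be bookkeeping at the 2-cell level together with splitting-preservation: verifying that $\Phi$ of a split fibration is \emph{strictly} 2-functorial, that the comparison functor $\elt(\Phi(F))\to\F$ is an isomorphism of split fibrations (full faithfulness and bijectivity on objects both reduce to the unique vertical-then-cartesian factorization), and that the two families of comparison isomorphisms are themselves 2-natural, so that the stated equivalence is a genuine equivalence of 2-categories and not merely a pointwise one. These verifications are the 2-dimensional refinement of the classical Grothendieck correspondence, and can be carried out following the pattern of \S2 of \cite{Buckley}.
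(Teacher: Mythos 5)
Your proposal is correct and follows exactly the approach the paper takes: the paper's proof is a one-line sketch naming the fiber-taking pseudo-inverse and deferring the details to Theorem B1.3.5 of \cite{Elephant}, and your argument is precisely that standard Grothendieck-correspondence verification written out in full. The details you supply — splitness of $\Pi$ via the lifts $(f,1_{f^*Y})$, strict 2-functoriality of the fiber construction, and the unique vertical-then-cartesian factorization driving the comparison isomorphism $\elt(\Phi(F))\cong\F$ — are all sound.
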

\begin{proof}  Again the pseudo-inverse sends a split fibration $F$ to the 2-functor that associates to each $C\in \C_0$ the fiber of $F$ over it.  For more see Theorem B1.3.5 of \cite{Elephant} for example.  \end{proof}

\begin{remark}[Desiderata 2]\label{desiderata restrict equiv}  Every discrete fibration is a split fibration.  Additionally, the category of elements construction for a category-valued functor on $\C^{op}$ applied to one taking discrete categories as values reduces to the category of elements construction for presheaves.  Thus, the equivalence in Theorem \ref{opfibration psdfunctor correspondence} restricts to that of Theorem \ref{disc fib set-functor duality thm} as in  the commutative diagram
$$\begin{tikzpicture}
\node(1){$\mathbf{Fib}(\C)$};
\node(2)[node distance=1.2in, right of=1]{$[\C^{op},\mathbf{Cat}]$};
\node(3)[node distance=.7in, below of=1]{$\mathbf{DFib}(\C)$};
\node(4)[node distance=.7in, below of=2]{$[\C^{op},\mathbf{Set}].$};
\node(5)[node distance=.5in, right of=1]{$$};
\node(6)[node distance=.4in, below of=5]{$$};
\draw[->](1) to node [above]{$\simeq$}(2);
\draw[->](3) to node [left]{$\mathrm{incl}$}(1);
\draw[->](4) to node [right]{$\mathrm{incl}$}(2);
\draw[->](3) to node [below]{$\simeq$}(4);
\end{tikzpicture}$$
The notion of a 2-fibration has been studied by Hermida \cite{Hermida} and Buckley \cite{Buckley}.  In particular Buckley gives a categorification of Theorem \ref{opfibration psdfunctor correspondence} as a 3-equivalence between 2-fibrations and 2-category-valued functors indexed by the base 2-category.  Thus, we have the following requirement on development of subsequent theory: that any equivalence between discrete 2-fibrations and category-valued functors should be a restriction of that between 2-fibrations and 2-category-valued functors.
\end{remark}

\subsection{Overview of Contents}

The outline of the sections of the paper is as follows.  As a preliminary, \S \ref{section: 3-categories} sets out the required background in 2- and 3-dimensional categories.  The real point of the section is to summarize the universal properties of various 2-categorical comma objects in suitable 3-categorical settings.  The section also introduces the appropriate ambient 3-categorical setting for the work of the paper.  Following this, \S \ref{section: disc 2-fibs} introduces the main concept of the paper, namely, so-called ``discrete 2-fibrations" and proves the representation result in Theorem \ref{theorem duality for disc 2-fibrations} as a higher analogue of Theorem \ref{disc fib set-functor duality thm}.  In \S \ref{section: monadicity}, the monadicity results are presented.  That is, it is proved in Theorem \ref{MAIN THM2 Monadicity} that 2-categories of discrete 2-fibrations are 2-monadic over appropriate slices of 2-categories of categories.  A summary of the contents of various subsection appears at the opening of each section.

\section{The 3-Categorical Setting}

This section is mostly background on 2- and 2-dimensional categories, culminating in the last subsection, which gives our initial proposal for the 3-dimensional forum for studying the universal constructions relating to 2-fibrations.

\label{section: 3-categories}

\subsection{Higher Categories and Size Issues}

Throughout 2- and certain 3-dimensional categories play an essential role.  For the most part, conventions on 2-categories in the form required here are summarized in a previous paper, namely, \cite{Me2}.  Some notational subtleties discussed below will differ.  In any case, roughly, the required material on 2-categories is Chapters I,2 and I,3 of Gray's \cite{GrayFormalCats}.  Other references are Chapter 7 of \cite{Handbook1}, Chapter B1 of \cite{Elephant}, and the paper of \cite{KS}.  Here we deal with (1) some conventions concerning foundational set-theoretic issues arising in examples; and (2) with stage-setting with particular 2- and 3-dimensional categories that will recur throughout the work.

\subsubsection{Universe Assumption}

Some care concerning matters of size is required in several of the examples later in the paper.  Firstly, by the term `set' is meant a set or collection, naively construed.  The most important assumption for the paper is that there is a sufficient number of so-called ``universes."  By the term is meant a universe in the sense of \cite{SGA4}, that is, a set of sets that is transitive under membership and closed under pairing, powersets, and unions indexed by elements of the universe.  The \textbf{universe assumption} is that every set belongs to some universe.

Throughout $\mathbf{Set}$ denotes a fixed category of distinguished ``small" sets, that is, sets that are members of some given, fixed universe.  Thus, a given arbitrary set is potentially large, i.e., not in $\mathbf{Set}$.  By the axiom of universes, there is a 2-category $\mathbf{Cat}$ of categories containing $\mathbf{Set}$ as a member and admitting an inclusion $\mathbf{Set}\to\mathbf{Cat}$ viewing a set as a (locally) discrete category.  Such a 2-category $\mathbf{Cat}$ can be constructed from any 2-category of $\mathbf{Set}$-small categories by expanding its set of elements by the universe axiom to include $\mathbf{Set}$ and thus all small sets by transitivity.  Similarly, there is a 3-category $\mathbf{2Cat}$ of 2-categories containing $\mathbf{Cat}$ as a member and admitting an inclusion $\mathbf{Cat}\to \mathbf{2Cat}$ viewing a category as a locally discrete 2-category.  Again this is constructed by the universe axiom.  Whenever it is important to distinguish between, for example, a 2-category of $\mathbf{Set}$-small categories on the one hand and a 2-category of categories containing $\mathbf{Set}$ as a member, write $\mathbf{Cat}$ for the former and $\mathbf{CAT}$ for the latter.  Adopt a similar convention for 3-categories of 2-categories.

\subsubsection{Specific 2- and 3-Categories}

Throughout we shall assume the basic machinery of enriched categories as in the early chapters of \cite{Enriched}.  Thus, a 2-category $\K$ is a $|\mathbf{Cat}|$-enriched category where $|\mathbf{Cat}|$ is the 1-category of (small) categories and functors.  In general the notation `$|\cdot |$' as in `$|\cat|$' will mean the $n-1$ dimensional structure obtained by discarding the top-dimensional cells of the structure enclosed in the `$|\cdot|$'.  So, for example, `$||\mathbf{2Cat}||$' denotes the 1-category of 2-categories and 2-functors obtained from the 3-category $\mathbf{2Cat}$ by forgetting the 3-cells and the 2-cells.  Consider the following further examples.

\begin{example}  Let $\mathbf{DFib}$ denote the 2-category of discrete fibrations $F\colon \F\to\C$, whose morphisms $F\to G$ are pairs of functors $(H,K)$ making commutative squares
$$\begin{tikzpicture}
\node(1){$\F$};
\node(2)[node distance=1in, right of=1]{$\G$};
\node(3)[node distance=.7in, below of=1]{$\C$};
\node(4)[node distance=.7in, below of=2]{$\B$};
\node(5)[node distance=.5in, right of=1]{$$};
\node(6)[node distance=.4in, below of=5]{$$};
\draw[->](1) to node [above]{$H$}(2);
\draw[->](1) to node [left]{$F$}(3);
\draw[->](2) to node [right]{$G$}(4);
\draw[->](3) to node [below]{$K$}(4);
\end{tikzpicture}$$
and whose 2-cells $(H,K) \Rightarrow (L,M)$ are pairs of natural transformations $(\alpha,\beta)$ with $\alpha\colon H\Rightarrow L$ and $\beta\colon K\Rightarrow M$ satisfying the compatibility condition $G\ast \alpha = \beta\ast F$.  Notice that there is thus a projection 2-functor 
\begin{equation}
\cod\colon \mathbf{DFib}\to\mathbf{Cat}
\end{equation}
taking a discrete fibration to its codomain category and extended suitably to morphisms and 2-cells.  Identify the fiber of $\cod$ over $\C\in \mathbf{Cat}_0$ with the category $\mathbf{DFib}(\C)$ of Definition \ref{discr fibn defn}.  The development for a 2-category $\mathbf{DOpf}$ of discrete opfibrations is analogous but dual.
\end{example}

\begin{example}[``Lax Arrow Category," or 2-Category of Squares] \label{2cat of squares example} Given a 2-category $\mathfrak C$, the 2-category of squares $\mathbf{Sq}(\mathfrak C)$ has as objects morphisms $f\colon A\to B$ of $\mathfrak C$; as arrows $f\to g$ those 2-cells 
$$\begin{tikzpicture}
\node(1){$A$};
\node(2)[node distance=1in, right of=1]{$C$};
\node(3)[node distance=.7in, below of=1]{$B$};
\node(4)[node distance=.7in, below of=2]{$D$};
\node(5)[node distance=.5in, right of=1]{$$};
\node(6)[node distance=.3in, below of=5]{$\alpha$};
\node(7)[node distance=.43in, below of=5]{$\Rightarrow$};
\draw[->](1) to node [above]{$h$}(2);
\draw[->](1) to node [left]{$f$}(3);
\draw[->](2) to node [right]{$g$}(4);
\draw[->](3) to node [below]{$k$}(4);
\end{tikzpicture}$$
of $\mathfrak C$; and finally as 2-cells pairs of 2-cells $(\gamma,\delta)$ for which there is an equality of composite 2-cells as in the diagram
$$\begin{tikzpicture}
\node(1){$A$};
\node(2)[node distance=1in, right of=1]{$C$};
\node(3)[node distance=.8in, below of=1]{$B$};
\node(4)[node distance=.8in, below of=2]{$D$};
\node(5)[node distance=.5in, right of=1]{$$};
\node(6)[node distance=.34in, below of=5]{$\alpha$};
\node(7)[node distance=.47in, below of=5]{$\Rightarrow$};
\node(8)[node distance=.25in, above of=5]{$\gamma \Uparrow$};
\node(9)[node distance=2.5in, right of=8]{$A$};
\node(10)[node distance=1in, right of=9]{$C$};
\node(11)[node distance=.8in, below of=9]{$B$};
\node(12)[node distance=.8in, below of=10]{$D.$};
\node(13)[node distance=.5in, right of=9]{$$};
\node(14)[node distance=.32in, below of=13]{$\beta$};
\node(15)[node distance=.45in, below of=13]{$\Rightarrow$};
\node(16)[node distance=1in, below of=13]{$\Uparrow\delta$};
\node(17)[node distance=1.5in, right of=7]{$=$};
\draw[->](1) to node [below]{$h$}(2);
\draw[->](1) to node [left]{$f$}(3);
\draw[->](2) to node [right]{$g$}(4);
\draw[->](3) to node [below]{$k$}(4);
\draw[->,bend left=100](1) to node [above]{$m$}(2);
\draw[->](9) to node [above]{$m$}(10);
\draw[->,bend right=100](11) to node [below]{$k$}(12);
\draw[->](9) to node [left]{$f$}(11);
\draw[->](10) to node [right]{$g$}(12);
\draw[->](11) to node [above]{$n$}(12);
\end{tikzpicture}$$
The domain of the arrow $\alpha$ above is thus $f$, while the codomain is $g$.  Denote ``source" and ``target" 2-functors by $\src \colon \mathbf{Sq}(\mathfrak C)\to\mathfrak C$ and $\tgt\colon \mathbf{Sq}(\mathfrak C)\to\mathfrak C$, respectively.  The source 2-functor takes an object (i.e. a morphism) to its domain, takes a morphism (i.e. a square as above) to the morphism $h$, and a 2-cell pair $(\gamma, \delta)$ to $\gamma$.  Target is defined analogously.  The 2-category $\mathbf{Sq}(\mathfrak C)$ is basically the 2-comma category $1_{\mathfrak C}/1_{\mathfrak C}$ in \S I,2.5 of \cite{GrayFormalCats}.  It is also a fragment of the double category of quintets associated to a 2-category of \cite{Ehresmann}.  Neither of these names will be used, however.
\end{example}

\begin{example} \label{algebra for monad in 2cat eg} Let $\K$ denote a 2-category and $t\colon B\to B$ a monad in $\K$ as in \S 3.1 of \cite{KS}.  Define a 2-category $t\mathbf{Alg}$ of $t$-algebras in $\K$.  The objects are $t$-algebras $(s,\nu)$ where $s\colon A\to B$ and $\nu\colon ts\Rightarrow s$ satisfying (3.2) of the reference.  A morphism $(s,\nu)\to (r,\lambda)$ is a pair $(g,\sigma)$ where $g\colon A\to C$ is a $\K$-morphism and $\sigma\colon s\Rightarrow rg$ is a 2-cells satisfying the equation
\[ (\lambda\ast g)( t\ast \sigma) = \sigma \nu,
\]
basically the appropriate adaptation of (3.3) of the reference allowing the domain of the $t$-algebras to vary.  The equation says that $\sigma$ is a morphism of the $t$-algebras $s$ and $gr$.  A 2-cell $(g,\sigma)\Rightarrow (h,\tau)$ is one $\alpha \colon g\Rightarrow h$ such that $(r\ast \alpha)\sigma = \tau$ holds.  Notice that by fixing the domain object in the $t$-algebra $s\colon A\to B$, this defines a category $t\mathbf{Alg}(A)$ of algebras with domain $A$.  Since algebra structure is preserved by precomposition with any morphism and homomorphisms are preserved by precomposition with arbitrary 2-cells, these categories yield a functor $t\mathbf{Alg}(-)\colon \mathfrak K^{op}\to\mathbf{Cat}$.
\end{example}

\begin{example} \label{lax slice example} For any 2-category $\mathfrak C$, the \textbf{lax slice} over an object $C\in\mathfrak C_0$ consists of arrows $f\colon B\to C$ as its objects, with morphisms $f\to g$ those pairs consisting of $h\colon B\to D$ and a 2-cell $\alpha\colon f\Rightarrow gh$, and finally with 2-cells those $\theta\colon h\Rightarrow k$ satisfying the compatibility condition 
$$\begin{tikzpicture}
\node(1){$B$};
\node(2)[node distance=.5in, right of=1]{$$};
\node(3)[node distance=.5in, right of=2]{$D$};
\node(4)[node distance=1in, below of=2]{$C$};
\node(5)[node distance=.2in, above of=2]{$\Uparrow\theta$};
\node(6)[node distance=.35in, below of=2]{$\alpha$};
\node(7)[node distance=.5in, below of=2]{$\Rightarrow$};
\node(8)[node distance=3in, right of=1]{$B$};
\node(9)[node distance=3in, right of=3]{$D$};
\node(10)[node distance=3in, right of=4]{$C$};
\node(11)[node distance=3in, right of=6]{$\beta$};
\node(12)[node distance=3in, right of=7]{$\Rightarrow$};
\node(13)[node distance=1.5in, right of=6]{$=$};
\draw[->,bend left=100](1) to node [above]{$k$}(3);
\draw[->](1) to node [below]{$h$}(3);
\draw[->](1) to node [left]{$f$}(4);
\draw[->](3) to node [right]{$g$}(4);
\draw[->](8) to node [above]{$k$}(9);
\draw[->](8) to node [left]{$f$}(10);
\draw[->](9) to node [right]{$g$}(10);
\end{tikzpicture}$$
The lax slice will be denoted by `$\mathfrak C/C$'.  Of course, there are various candidates for 2-categorical slices.  That is, the morphisms could alternatively have the 2-cell $\alpha$ be an isomorphism or an equality.  The latter would perhaps be called the ``strict slice" 2-category.  Whereas the lax slice will occur in a number of examples, the strict slice will be used throughout \S 4 in considering monadicity.  The two notions will not be used in the same context, so the same notation `$\mathfrak C/C$' will stand for each and the meaning will be explicitly stated whenever it arises.
\end{example}

A final generic example we now treat in more detail.

\begin{construction}[Lax Comma Category; Cf. \S I,2.5 of \cite{GrayFormalCats}] \label{lax comma cat construction} The \textbf{lax comma category} of a 2-functor $F\colon \mathfrak A\to\mathfrak B$ over another $G\colon\mathfrak C\to\mathfrak B$, denoted by $F/G$, has as objects triples $(A,f,C)$ with $f\colon FA\to GC$ an arrow of $\mathfrak B$; and with morphisms $(A,f,C)\to (B,g,D)$ those triples $(h,k,\alpha)$ making a cell in $\mathfrak B$ of the form
$$\begin{tikzpicture}
\node(1){$FA$};
\node(2)[node distance=1in, right of=1]{$FB$};
\node(3)[node distance=.8in, below of=1]{$GC$};
\node(4)[node distance=.8in, below of=2]{$GD$};
\node(5)[node distance=.5in, right of=1]{$$};
\node(6)[node distance=.34in, below of=5]{$\alpha$};
\node(7)[node distance=.47in, below of=5]{$\Rightarrow$};
\draw[->](1) to node [above]{$Fh$}(2);
\draw[->](1) to node [left]{$f$}(3);
\draw[->](2) to node [right]{$g$}(4);
\draw[->](3) to node [below]{$Gk$}(4);
\end{tikzpicture}$$
and whose 2-cells $(h,k,\alpha) \Rightarrow (m,n,\beta)$ are pairs $(\gamma,\delta)$ satisfying the 2-cell ``cylinder" equality
$$\begin{tikzpicture}
\node(1){$FA$};
\node(2)[node distance=1in, right of=1]{$FB$};
\node(3)[node distance=.8in, below of=1]{$GC$};
\node(4)[node distance=.8in, below of=2]{$GD$};
\node(5)[node distance=.5in, right of=1]{$$};
\node(6)[node distance=.34in, below of=5]{$\alpha$};
\node(7)[node distance=.47in, below of=5]{$\Rightarrow$};
\node(8)[node distance=.25in, above of=5]{$\gamma \Uparrow$};
\node(9)[node distance=2.5in, right of=8]{$FA$};
\node(10)[node distance=1in, right of=9]{$FB$};
\node(11)[node distance=.8in, below of=9]{$GC$};
\node(12)[node distance=.8in, below of=10]{$GD.$};
\node(13)[node distance=.5in, right of=9]{$$};
\node(14)[node distance=.32in, below of=13]{$\beta$};
\node(15)[node distance=.45in, below of=13]{$\Rightarrow$};
\node(16)[node distance=1in, below of=13]{$\Uparrow\delta$};
\node(17)[node distance=1.5in, right of=7]{$=$};
\draw[->](1) to node [below]{$Fh$}(2);
\draw[->](1) to node [left]{$f$}(3);
\draw[->](2) to node [right]{$g$}(4);
\draw[->](3) to node [below]{$Gk$}(4);
\draw[->,bend left=100](1) to node [above]{$Fm$}(2);
\draw[->](9) to node [above]{$Fm$}(10);
\draw[->,bend right=100](11) to node [below]{$Gk$}(12);
\draw[->](9) to node [left]{$f$}(11);
\draw[->](10) to node [right]{$g$}(12);
\draw[->](11) to node [above]{$Gn$}(12);
\end{tikzpicture}$$
Somewhat paradoxically take the target of the square $\alpha$ in the first display as the arrow $k$ and the source as $h$.  Ordinary composition of squares is by pasting those with matching domain and codomain; a further ``horizontal" composition of squares is by pasting squares with matching source and target.  Now, vertical composition of 2-cells is given by vertical  composition of 2-cells in $\mathfrak B$ whereas horizontal composition is given by horizontal composition in $\mathfrak B$.  A further composition of 2-cells is given by stacking cylinders.  Source and target again define 2-functors $\src \colon F/G\to \mathfrak A$ and $\tgt\colon F/G\to \mathfrak B$.  Notice as a special case with $F=G=1_{\mathfrak B}$, the 2-category of squares $\mathbf{Sq}(\mathfrak B)$ of Example \ref{2cat of squares example} is recovered.  In the special case that only commutative squares in $\mathfrak B$ are taken in the definition of the morphisms of $F/G$, we have the \textbf{2-comma category}, which in general is given by the same notation, but will be accompanied by clarification in context whenever it is used.  Notice that the 2-comma category is accompanied by a canonically given 2-natural transformation $\beta\colon F\circ \src\Rightarrow G\circ\tgt$ defined on a component $(A,f,C)$ by $\beta_{(A,f,C)}=\beta_f$.  This is universal in the following sense.
\end{construction}

\begin{prop}[Universal Property of 2-Comma Category; Cf. \S 1 p.108 of \cite{StreetFibrations}] \label{up of 2-comma} Given 2-functors $F\colon \mathfrak A\to \mathfrak B$ and $G\colon \mathfrak C\to\mathfrak B$, the 2-comma category $F/G$ of Construction \ref{lax comma cat construction} is 1-, 2- and 3-dimensionally universal in the following sense.
\begin{enumerate}
\item  Given 2-functors $H\colon \mathfrak D\to\mathfrak A$ and $K\colon \mathfrak D\to \mathfrak C$ and any 2-cell $\theta\colon FH\Rightarrow GK$, there is a unique 2-functor $U\colon \mathfrak D\to F/G$ such that $\theta = \lambda \ast U$.
\item Given 2-natural transformations $\xi$ and $\zeta$ satisfying the equality of 2-cells in the diagram
$$\begin{tikzpicture}
\node(1){$\mathfrak D$};
\node(2)[node distance=.6in, right of=1]{$\Uparrow \xi$};
\node(3)[node distance=.6in, right of=2]{$\mathfrak C$};
\node(4)[node distance=.5in, above of=2]{$F/G$};
\node(5)[node distance=.5in, below of=2]{$F/G$};
\node(6)[node distance=.6in, right of=5]{$\Uparrow\lambda$};
\node(7)[node distance=.6in, right of=6]{$\mathfrak B$};
\node(8)[node distance=.5in, below of=6]{$\mathfrak A$};
\node(9)[node distance=1in, right of=7]{$\mathfrak D$};
\node(10)[node distance=.6in, right of=9]{$\Uparrow\zeta$};
\node(11)[node distance=.6in, right of=10]{$\mathfrak A$};
\node(12)[node distance=.5in, above of=10]{$F/G$};
\node(13)[node distance=.5in, below of=10]{$F/G$};
\node(14)[node distance=.6in, right of=12]{$\Uparrow\lambda$};
\node(15)[node distance=.6in, right of=14]{$\mathfrak A$};
\node(16)[node distance=.5in, above of=14]{$\mathfrak C$};
\node(17)[node distance=.5in, right of=7]{$=$};
\draw[->](1) to node [above]{$V\;\;\;$}(4);
\draw[->](1) to node [below]{$U\;\;\;$}(5);
\draw[->](4) to node [above]{$\;\;\;\tgt$}(3);
\draw[->](5) to node [below]{$\;\;\;\tgt$}(3);
\draw[->](5) to node [below]{$\src\;\;\;$}(8);
\draw[->](3) to node [above]{$\;\;\;G$}(7);
\draw[->](8) to node [below]{$\;\;\;F$}(7);
\draw[->](9) to node [above]{$V\;\;\;$}(12);
\draw[->](9) to node [below]{$U\;\;\;$}(13);
\draw[->](12) to node [above]{$\;\,\src$}(11);
\draw[->](13) to node [below]{$\;\;\;\src$}(11);
\draw[->](12) to node [above]{$\tgt\;\;\;\;$}(16);
\draw[->](11) to node [below]{$\;\;\;F$}(15);
\draw[->](16) to node [above]{$\;\;\;G$}(15);
\end{tikzpicture}$$
there is a unique 2-natural transformation $\omega\colon U\Rightarrow V$ such that the equations $\src\ast \omega = \xi$ and $\tgt\ast\omega = \zeta$ each hold.
\item Given 2-natural transformations $\omega \colon U \Rightarrow V$ and $\chi\colon U \Rightarrow V$ between $U,V\colon \mathfrak D\rightrightarrows F/G$ as above with modfications $m\colon \src\ast \omega \Rrightarrow \src\ast \chi$ and $n\colon \tgt\ast \omega \Rrightarrow \tgt\ast \chi$ satisfying 
\begin{equation} \label{compat} (\lambda\ast V)Fm = Gn(\lambda\ast U)
\end{equation}
there is a unique modification $l\colon \omega\Rrightarrow \chi$ satisfying $\tgt\ast l = n$ and $\src\ast l = m$.
\end{enumerate}
These properties characterize $F/G$ up to isomorphism in $\mathbf{2Cat}$.
\end{prop}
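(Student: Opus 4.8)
The plan is to verify the three clauses one dimension at a time, in each case building the required cell of $F/G$ directly out of the given data and reading off uniqueness from a single structural fact: any object, morphism, or 2-cell of $F/G$ is completely determined by its image under $\src$, its image under $\tgt$, and the corresponding component of the canonical 2-natural transformation $\lambda\colon F\circ\src\Rightarrow G\circ\tgt$. Put differently, the triple $(\src,\tgt,\lambda)$ is jointly monic on cells in every dimension, so in each clause existence is the only thing genuinely requiring a construction, while uniqueness is automatic.

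For clause (1) I would set $U(d)=(Hd,\theta_d,Kd)$ on an object $d\in\mathfrak D_0$, where $\theta_d\colon FHd\to GKd$ is the component of $\theta$, and then put $U(\phi)=(H\phi,K\phi)$ on a 1-cell $\phi$ and $U(\psi)=(H\psi,K\psi)$ on a 2-cell $\psi$. The point is that the commutativity condition demanded of $U(\phi)$ in order to be a morphism of $F/G$ is precisely the $1$-dimensional naturality square of $\theta$ at $\phi$, and the cylinder condition demanded of $U(\psi)$ is precisely the $2$-dimensional clause of $2$-naturality of $\theta$; hence $U$ is a well-defined 2-functor. Evaluating $\lambda\ast U$ at $d$ returns $\lambda_{(Hd,\theta_d,Kd)}=\theta_d$, so $\lambda\ast U=\theta$, together with $\src\circ U=H$ and $\tgt\circ U=K$ forced for the whiskering to typecheck. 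Any competing $U'$ subject to these constraints agrees with $U$ on source-component, target-component, and $\mathfrak B$-datum, hence equals $U$.

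Clauses (2) and (3) I would handle by the same construction shifted up one dimension. For (2), the compatibility of $\xi$ and $\zeta$ recorded by the displayed pasting equality is, component-by-component at $d$, exactly the commutativity condition making the pair $(\xi_d,\zeta_d)$ a legitimate morphism $Ud\to Vd$ of $F/G$, so I would define $\omega_d=(\xi_d,\zeta_d)$; its 2-naturality is inherited from that of $\xi$ and $\zeta$, and $\src\ast\omega=\xi$, $\tgt\ast\omega=\zeta$ hold by fiat. For (3), equation \eqref{compat} evaluated at $d$ is exactly the cylinder condition making the pair $(m_d,n_d)$ a 2-cell $\omega_d\Rrightarrow\chi_d$ of $F/G$, so I would set $l_d=(m_d,n_d)$, with the modification axiom following from those for $m$ and $n$ and with $\src\ast l=m$, $\tgt\ast l=n$ by construction. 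Uniqueness in both clauses is again joint monicity of $(\src,\tgt,\lambda)$. The closing assertion then follows by the usual argument: a second 2-category carrying projections and a comparison 2-cell enjoying these three properties receives mutually inverse comparison 2-functors from two applications of (1), and the uniqueness clauses of (1)--(3) force the round-trips to be identities, so $F/G$ is determined up to isomorphism in $\mathbf{2Cat}$.

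The step I expect to demand the most care is matching orientations rather than proving anything substantive. The construction adopts the deliberately inverted convention that the square $\alpha$ has target $k$ and source $h$, so in transcribing the naturality and cylinder conditions I must confirm that the left- and right-hand whiskerings of the pasting diagrams land on the intended sides of each commuting square, and that the direction of \eqref{compat} agrees with the direction in which the components of $\omega$ and $\chi$ compose. Because the relevant $F/G$ here is the 2-comma (strictly commutative) variant rather than the fully lax one, the mediating 2-cell is an identity throughout, which removes any genuine coherence difficulty and collapses each verification to comparing a pasting of 2-cells in $\mathfrak B$ against the defining equation of the appropriate layer of $F/G$.
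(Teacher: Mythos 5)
Your proposal is correct and takes essentially the same approach as the paper: the paper proves clause (3) by exactly your componentwise construction, reading the compatibility equation at each $D\in\mathfrak D_0$ as the cylinder condition exhibiting a 2-cell of $F/G$, and disposes of clauses (1) and (2) by observing that the underlying 1-category of the 2-comma category is the 1-comma category of the underlying functors, which is the same componentwise construction you simply write out explicitly (together with the joint-monicity observation that makes uniqueness automatic). Your closing sketch of the characterization up to isomorphism goes slightly beyond the paper, which asserts that claim without proof.
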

\begin{proof}  The proofs of the first two universality conditions are the same as for the ordinary 1-comma category in $\mathbf{Cat}$, since the underlying 1-category of the 2-comma category is essentially the 1-comma category of the underlying functors.  Thus, we have only to prove the third condition.  But the compatibility condition \ref{compat} just means that for each $D\in\mathfrak D_0$ there is an equality of composite 2-cells in $\mathfrak B$ of the form
$$\begin{tikzpicture}
\node(1){$\cdot$};
\node(2)[node distance=1.2in, right of=1]{$\cdot$};
\node(3)[node distance=.8in, below of=1]{$\cdot$};
\node(4)[node distance=.8in, below of=2]{$\cdot$};
\node(5)[node distance=.6in, right of=1]{$\Downarrow Fm_D$};
\node(6)[node distance=.9in, below of=5]{$$};
\node(7)[node distance=.5in, below of=5]{$$};
\node(8)[node distance=1.6in, right of=7]{$=$};
\node(9)[node distance=2in, right of=2]{$\cdot$};
\node(10)[node distance=1.2in, right of=9]{$\cdot$};
\node(11)[node distance=.8in, below of=9]{$\cdot$};
\node(12)[node distance=.8in, below of=10]{$\cdot$};
\node(13)[node distance=.6in, right of=9]{$$};
\node(14)[node distance=.2in, below of=13]{$$};
\node(15)[node distance=.6in, below of=14]{$\Downarrow Gn_D$};
\draw[->](2) to node [right]{$VD$}(4);
\draw[->](1) to node [left]{$UD$}(3);
\draw[->,bend left](1) to node [above]{$\src\,\omega_D$}(2);
\draw[->,bend right](1) to node [below]{$\src\,\chi_D$}(2);
\draw[->,bend right](3) to node [below]{$\tgt\,\chi_D$}(4);
\draw[->,bend left](9) to node [above]{$\src\,\omega_D$}(10);
\draw[->](9) to node [left]{$UD$}(11);
\draw[->,bend left](11) to node [above]{$\tgt\,\omega_D$}(12);
\draw[->,bend right](11) to node [below]{$\tgt\,\chi_D$}(12);
\draw[->](10) to node [right]{$VD$}(12);
\end{tikzpicture}$$
But this is plainly the form of a 2-cell in $F/G$ and thus gives the $D$-component of the purported modification $l$.  Compatibility follows by naturality and the modification condition.  \end{proof}

\begin{remark}[Non-Elementary Statement of Universal Property] \label{remark on enriched cotensor} Recall from \S 3.7 of \cite{Enriched} that the cotensor of an object $C$ in a $\mathscr V$-category $\mathcal B$ by an object $X\in \mathscr V_0$ is an object $X\pitchfork C$ of $\mathcal B$ for which there is a $\mathscr V$-natural isomorphism with counit
\begin{equation} \label{enriched cotensor defn} \mathcal B(B, X\pitchfork C)\cong [X,\mathcal B(B,C)] \qquad X\to\mathcal B(X\pitchfork C,C)
\end{equation}
for any $B\in \mathcal B_0$.  Viewing $\mathbf{Cat}$ as $\mathbf{Set}$-enriched, for any small category $\C$, the usual arrow category $\mathbf{Arr}(\C) = \mathbf{Cat}(\mathbf 2,\C)$ is the cotensor of $\C$ with the ordinal category $\mathbf 2$.  Similarly, the usual arrow 2-category $\mathbf{2Cat}(\mathbf 2,\mathfrak C)$ is the cotensor of $\mathfrak C$ with $\mathbf 2$ in $\mathbf{2Cat}$.  Now, the universal property of the 2-comma category says, essentially, that $1_{\mathfrak B}/1_{\mathfrak B}$, the comma category of $1_{\mathfrak B}$ with itself, is the cotensor of $\mathfrak B$ with $\mathbf 2$ in the sense that there is an isomorphism
\[ \mathbf{2Cat}(\mathfrak A,1_{\mathfrak B}/1_{\mathfrak B})\cong |\cat|(\mathbf 2,\mathbf{2Cat}(\mathfrak A,\mathfrak B))
\]
induced by composition with the canonical 2-natural transformation $\beta$ of Construction \ref{lax comma cat construction}.  This is probably easiest to see from the definitions using the fact that the hom-category on the right is isomorphic to the arrow 2-category $\mathbf{2Cat}(\mathfrak A,\mathfrak B)^{\mathbf 2}$ as presented in Construction \ref{lax comma cat construction}.  Note in particular that $1_{\mathfrak B}/1_{\mathfrak B}$ is isomorphic to the 2-arrow category $\mathfrak B^{\mathbf 2}$.
\end{remark}

More examples of 2-categories will be known to the reader and others will be introduced in the course of the paper.  For now, $||\mathbf{2Cat}||$ denotes the ordinary category of 2-categories and 2-natural transformations.  It is strict cartesian monoidal.

\begin{define} \label{3cat defn} A \textbf{3-category} is a $||\mathbf{2Cat}||$-enriched category.
\end{define}

\begin{remark}  This means that a 3-category $\mathcal A$ is a set of objects $\mathcal A_0$ together with ``hom 2-categories" $\mathcal A(A,B)$ for any $A,B\in \mathcal A_0$ together with appropriate composition and identity 2-functors satisfying the usual associativity and unit diagrams.
\end{remark}

\begin{example}  2-categories, 2-functors, 2-natural transformations, and their modifications comprise the 3-category, $\mathbf{2Cat}$.
\end{example}

\begin{example}  For any 2-category $\mathfrak A$, 2-functors $\mathfrak A^{op}\to\mathbf{2Cat}$, 2-natural transformations, modifications and ``perturbations" between them (as in \cite{CohTricats}) form a 3-category denoted by $[\mathfrak A^{op},\mathbf{2Cat}]$.
\end{example}

\subsection{Lax Natural Transformations}

The notions of 2-functor, 2-natural transformation, and modification are well-known and together comprise the data for the 3-categorical structure giving $\mathbf{2Cat}$.  Perhaps less well-known is the idea of a lax natural transformation, which for completeness is recalled here.

\begin{define}[Cf. \S I,2.4 of \cite{GrayFormalCats}] \label{LAXNATTRANSF} A \textbf{lax-natural transformation} $\alpha\colon F \Rightarrow G$ of 2-functors $F,G\colon \mathfrak K\rightrightarrows \mathfrak L$ consists of a family of arrows $\alpha_A\colon FA\to GA$ of $\mathfrak L$ indexed over the objects $A\in\mathfrak K_0$ together with, for each arrow $f$ of $\mathfrak K$, a distinguished 2-cell
$$\begin{tikzpicture}
\node(1){$FA$};
\node(2)[node distance=1in, right of=1]{$FB$};
\node(3)[node distance=.8in, below of=1]{$GA$};
\node(4)[node distance=.8in, below of=2]{$GB$};
\node(5)[node distance=.5in, right of=1]{$$};
\node(6)[node distance=.34in, below of=5]{$\alpha_f$};
\node(7)[node distance=.46in, below of=5]{$\Rightarrow$};
\draw[->](1) to node [above]{$Ff$}(2);
\draw[->](1) to node [left]{$\alpha_A$}(3);
\draw[->](2) to node [right]{$\alpha_B$}(4);
\draw[->](3) to node [below]{$Gf$}(4);
\end{tikzpicture}$$ 
satisfying the following two compatibility conditions.  
\begin{enumerate}
\item For any composable arrows $f$ and $g$ of $\mathfrak K$, there is an equality of 2-cells
$$\begin{tikzpicture}
\node(1){$FA$};
\node(2)[node distance=1in, right of=1]{$FB$};
\node(3)[node distance=.8in, below of=1]{$GA$};
\node(4)[node distance=.8in, below of=2]{$GB$};
\node(5)[node distance=.5in, right of=1]{$$};
\node(6)[node distance=.34in, below of=5]{$\alpha_f$};
\node(7)[node distance=.46in, below of=5]{$\Rightarrow$};
\node(8)[node distance=1in, right of=2]{$FC$};
\node(9)[node distance=.8in, below of=8]{$GC$};
\node(10)[node distance=1in, right of=6]{$\alpha_g$};
\node(11)[node distance=1in, right of=7]{$\Rightarrow$};
\node(12)[node distance=2in, right of=8]{$FA$};
\node(13)[node distance=1in, right of=12]{$FB$};
\node(14)[node distance=.8in, below of=12]{$GA$};
\node(15)[node distance=.8in, below of=13]{$GB$};
\node(16)[node distance=.5in, right of=12]{$$};
\node(17)[node distance=.34in, below of=16]{$\alpha_{gf}$};
\node(18)[node distance=.46in, below of=16]{$\Rightarrow$};
\node(19)[node distance=1.5in, right of=11]{$=$};
\draw[->](1) to node [above]{$Ff$}(2);
\draw[->](1) to node [left]{$\alpha_A$}(3);
\draw[->](2) to node [right]{$\alpha_B$}(4);
\draw[->](3) to node [below]{$Gf$}(4);
\draw[->](2) to node [above]{$Fg$}(8);
\draw[->](4) to node [below]{$Gg$}(9);
\draw[->](8) to node [right]{$\alpha_C$}(9);
\draw[->](12) to node [above]{$Ff$}(13);
\draw[->](12) to node [left]{$\alpha_A$}(14);
\draw[->](13) to node [right]{$\alpha_B$}(15);
\draw[->](14) to node [below]{$Gf$}(15);
\end{tikzpicture}$$ 
\item For any 2-cell $\theta \colon f\Rightarrow g$ of $\mathfrak K$, there is an equality of 2-cells as depicted in the diagram
$$\begin{tikzpicture}
\node(1){$FA$};
\node(2)[node distance=1.2in, right of=1]{$FB$};
\node(3)[node distance=.8in, below of=1]{$GA$};
\node(4)[node distance=.8in, below of=2]{$GB$};
\node(5)[node distance=.6in, right of=1]{$\Uparrow F\theta$};
\node(6)[node distance=.74in, below of=5]{$\Rightarrow$};
\node(7)[node distance=.63in, below of=5]{$\alpha_f$};
\node(8)[node distance=1.6in, right of=7]{$=$};
\node(9)[node distance=2in, right of=2]{$FA$};
\node(10)[node distance=1.2in, right of=9]{$FB$};
\node(11)[node distance=.8in, below of=9]{$FA$};
\node(12)[node distance=.8in, below of=10]{$FB$};
\node(13)[node distance=.6in, right of=9]{$\alpha_g$};
\node(14)[node distance=.11in, below of=13]{$\Rightarrow$};
\node(15)[node distance=.8in, below of=13]{$\Uparrow G\theta$};
\draw[->](2) to node [right]{$\alpha_B$}(4);
\draw[->](1) to node [left]{$\alpha_A$}(3);
\draw[->,bend left](1) to node [above]{$Fg$}(2);
\draw[->,bend right](1) to node [below]{$Ff$}(2);
\draw[->,bend right](3) to node [below]{$Gf$}(4);
\draw[->,bend left](9) to node [above]{$Fg$}(10);
\draw[->](9) to node [left]{$\alpha_A$}(11);
\draw[->,bend left](11) to node [above]{$Gg$}(12);
\draw[->,bend right](11) to node [below]{$Gf$}(12);
\draw[->](10) to node [right]{$\alpha_B$}(12);
\end{tikzpicture}$$
\end{enumerate}
A lax-natural transformation is \textbf{pseudo natural} if the cells $\alpha_f$ are invertible.  If they are identities, the transformation is \textbf{2-natural}.
\end{define}

\begin{example} \label{universal lax nat transf for cotensor} Between the source and target 2-functors
\[ \src,\tgt\colon \mathbf{Sq}(\mathfrak B) \rightrightarrows\mathfrak B
\]
from the 2-category of squares from Example \ref{2cat of squares example}, there is a lax natural transformation $\beta\colon \src \Rightarrow \tgt$ given on a component $f\colon B\to C$ by $\beta_f:=f$.  If $\alpha\colon f\to g$ is an arrow of $\mathbf{Sq}(\mathfrak B)$ with source $h$ and target $k$ as in the first display of Example \ref{2cat of squares example}, then take the coherence cell in $\mathfrak B$ to be $\alpha$ itself.  Then the axioms for lax naturality are satisfied by the definitions of composition for 1- and 2-cells in $\mathbf{Sq}(\mathfrak B)$.  Of course this is a special case of a more general situation for the lax comma category of a 2-functor $F$ over another $G$.  That is, $F/G$ and its projections $\src\colon F/G\to \mathfrak A$ and $\tgt\colon F/G\to \mathfrak C$ admit a lax natural transformation $\beta\colon F\circ \src \Rightarrow G\circ \tgt$ defined on components in an analgous way.  This will make $F/G$ into a lax comma object in $\mathbf{Lax}$, as seen in the development below.
\end{example}

\begin{lemma} \label{comma 1-dim universal lemma} The lax natural transformation $\beta\colon F\circ \src \Rightarrow G\circ \tgt$ of Example \ref{universal lax nat transf for cotensor} is 1-dimensionally universal amid lax natural transformations in the sense that for any lax natural transformation 
\[\alpha\colon FH\Rightarrow GK
\]
there is a unique 2-functor depicted as the dashed arrow in the diagram
$$\begin{tikzpicture}
\node(1){$F/G$};
\node(2)[node distance=1in, right of=1]{$\mathfrak C$};
\node(3)[node distance=.8in, below of=1]{$\mathfrak A$};
\node(4)[node distance=.8in, below of=2]{$\mathfrak B$};
\node(5)[node distance=.5in, right of=1]{$$};
\node(6)[node distance=.35in, below of=5]{$\beta$};
\node(7)[node distance=.6in, left of=1]{$$};
\node(8)[node distance=.5in, above of=7]{$\mathfrak D$};
\node(9)[node distance=.5in, below of=5]{$\Rightarrow$};
\draw[->](1) to node [above]{$\tgt$}(2);
\draw[->](1) to node [left]{$\src$}(3);
\draw[->](2) to node [right]{$G$}(4);
\draw[->](3) to node [below]{$F$}(4);
\draw[->,bend right](8) to node [left]{$H$}(3);
\draw[->,bend left](8) to node [above]{$K$}(2);
\draw[->,dashed](8) to node [above]{$\;U$}(1);
\end{tikzpicture}$$
making two commutative triangles and satisfying $\beta\ast U =\alpha$.
\end{lemma}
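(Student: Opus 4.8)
The plan is to read off $U$ directly from the data of $\alpha$ and then argue that this choice is forced, which delivers existence and uniqueness simultaneously. Writing $H\colon\mathfrak D\to\mathfrak A$ and $K\colon\mathfrak D\to\mathfrak C$ for the legs of the cone and $\alpha\colon FH\Rightarrow GK$ for the given lax natural transformation, I would define $U$ on an object $D\in\mathfrak D_0$ by $U(D):=(HD,\alpha_D,KD)$; this is a genuine object of $F/G$ because its middle coordinate $\alpha_D\colon FHD\to GKD$ is an arrow of $\mathfrak B$ from $F(HD)$ to $G(KD)$. On a morphism $d\colon D\to D'$ set $U(d):=(Hd,Kd,\alpha_d)$, where $\alpha_d$ is the distinguished coherence $2$-cell of $\alpha$ at $d$; by Definition \ref{LAXNATTRANSF} this cell fills exactly the square with top $F(Hd)$, left $\alpha_D$, right $\alpha_{D'}$ and bottom $G(Kd)$ demanded of a morphism of $F/G$. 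Finally, on a $2$-cell $\theta\colon d\Rightarrow d'$ set $U(\theta):=(H\theta,K\theta)$.

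The substance of the argument is that $U$ is a $2$-functor, and here the two axioms of a lax natural transformation do all the work. Functoriality of $U$ on composable morphisms reduces in the first two coordinates to functoriality of $H$ and $K$, while agreement of the third coordinate under composition is precisely compatibility condition (1) of Definition \ref{LAXNATTRANSF}, which identifies $\alpha_{gf}$ with the pasting of $\alpha_f$ and $\alpha_g$, i.e. with the composite square computed in $F/G$ as in Construction \ref{lax comma cat construction}. Preservation of identities follows from the unit normalization of the coherence cells (the cell at an identity being an identity). That $U(\theta)=(H\theta,K\theta)$ is a well-defined $2$-cell of $F/G$, meaning the pair satisfies the cylinder equality of Construction \ref{lax comma cat construction}, is exactly compatibility condition (2) of Definition \ref{LAXNATTRANSF}; preservation of vertical and horizontal composites and identities of $2$-cells is then inherited coordinatewise from the $2$-functoriality of $H$ and $K$.

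Next I would check the three required identities. The triangle equations $\src\circ U=H$ and $\tgt\circ U=K$ hold by inspection on objects, morphisms, and $2$-cells, since $\src$ and $\tgt$ simply project onto the first and second coordinates. For $\beta\ast U=\alpha$, recall from Example \ref{universal lax nat transf for cotensor} that $\beta$ sends an object $(A,f,C)$ to the arrow $f$ and a morphism to its defining square; whiskering on the right by $U$ therefore gives $(\beta\ast U)_D=\beta_{U(D)}=\alpha_D$ on components and $(\beta\ast U)_d=\beta_{U(d)}=\alpha_d$ on coherence cells, which is $\alpha$ on the nose.

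Uniqueness then follows from the observation that every cell of $F/G$ is recovered from its image under $\src$, its image under $\tgt$, and the corresponding component of $\beta$: an object is the triple of its source, defining arrow, and target; a morphism is the triple of source, square, and target; and a $2$-cell is merely the pair of its source and target. Hence any $U'$ with $\src U'=H$, $\tgt U'=K$ and $\beta\ast U'=\alpha$ must coincide with $U$ on all cells. The one place demanding genuine attention is not any computation but the orientation bookkeeping: Construction \ref{lax comma cat construction} deliberately takes the \emph{target} of the square as $k$ and the \emph{source} as $h$, so I must confirm that the coherence cells $\alpha_d$ are oriented in agreement with that convention before identifying them with morphisms of $F/G$. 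Once the diagrams are matched, the proof is a direct translation of the two lax-naturality axioms into the two $2$-functoriality clauses, so no separate verification is required.
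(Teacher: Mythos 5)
Your proposal is correct and follows essentially the same route as the paper's proof: define $U$ by sending an object to the component $\alpha_D$ (as the triple $(HD,\alpha_D,KD)$), a morphism to its lax-naturality square, and a $2$-cell to the pair $(H\theta,K\theta)$, with $2$-functoriality falling out of the two compatibility conditions of Definition \ref{LAXNATTRANSF}. You additionally make explicit the uniqueness argument (that every cell of $F/G$ is recovered from its $\src$- and $\tgt$-images together with the relevant component of $\beta$) and the reliance on unit normalization of the coherence cells, both of which the paper leaves implicit but which are consistent with its conventions.
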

\begin{proof}  Define $\mathfrak D\to F/G$ in the following way.  On objects take $C\mapsto \alpha_C\colon FC\to GC$.  On a morphism $f\colon C\to D$, take $Uf$ to be the square
$$\begin{tikzpicture}
\node(1){$FC$};
\node(2)[node distance=1in, right of=1]{$FD$};
\node(3)[node distance=.8in, below of=1]{$GC$};
\node(4)[node distance=.8in, below of=2]{$GD$};
\node(5)[node distance=.5in, right of=1]{$$};
\node(6)[node distance=.34in, below of=5]{$\alpha_f$};
\node(7)[node distance=.46in, below of=5]{$\Rightarrow$};
\draw[->](1) to node [above]{$Ff$}(2);
\draw[->](1) to node [left]{$\alpha_A$}(3);
\draw[->](2) to node [right]{$\alpha_B$}(4);
\draw[->](3) to node [below]{$Gf$}(4);
\end{tikzpicture}$$ 
coming with $\alpha$.  The assignment on a 2-cell $\theta\colon f\Rightarrow g$ of $\mathfrak D$ is then evident and is well-defined by the compatibility condition for $\alpha$.  That $U$ is functor follows also by the compatibility conditions for $\alpha$.  It is clear that by construction $U$ makes the two traingles commute and satisfies the equation $\beta\ast U=\alpha$.  \end{proof}

\begin{remark}  In fact $F/G$ is furthermore 2- and 3-dimensionally universal in a precise sense recalled in the next subsection.  For now, we return to generalities on lax natural transformations.
\end{remark}

\begin{lemma}  For any 2-categories $\mathfrak A$ and $\mathfrak B$, the 2-functors between them, with lax natural transformations and modifications, are the data of a 2-category, denoted here by $\mathbf{Lax}(\mathfrak A,\mathfrak B)$.
\end{lemma}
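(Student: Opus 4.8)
The plan is to supply the two missing layers of compositional structure—composition of lax natural transformations as $1$-cells, and vertical and horizontal composition of modifications as $2$-cells—and then to check that all the requisite axioms hold because they are inherited objectwise from the ambient $2$-category $\mathfrak B$.

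First I would define horizontal composition of $1$-cells. Given lax natural transformations $\alpha\colon F\Rightarrow G$ and $\gamma\colon G\Rightarrow H$, set $(\gamma\alpha)_A := \gamma_A\alpha_A$ on each object $A\in\mathfrak A_0$, and for an arrow $f\colon A\to B$ define the coherence $2$-cell $(\gamma\alpha)_f$ as the evident composite of $\gamma_B\ast\alpha_f$ following $\gamma_f\ast\alpha_A$, that is, the pasting of the naturality cell $\gamma_f$ whiskered by $\alpha_A$ with $\alpha_f$ whiskered by $\gamma_B$. The identity $1$-cell on $F$ has components $1_{FA}$ together with identity coherence cells. I would then verify that $\gamma\alpha$ is again a lax natural transformation: both compatibility conditions of Definition \ref{LAXNATTRANSF} for $\gamma\alpha$ follow by pasting the corresponding conditions for $\alpha$ and for $\gamma$ side by side and invoking the interchange law together with the $2$-functoriality of $F$, $G$, and $H$ in $\mathfrak B$.

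Second, I would treat modifications. A modification $m\colon\alpha\Rrightarrow\alpha'$ between parallel lax transformations $F\Rightarrow G$ consists of $2$-cells $m_A\colon\alpha_A\Rightarrow\alpha'_A$ of $\mathfrak B$ that are compatible with the coherence cells of $\alpha$ and $\alpha'$. Vertical composition is taken componentwise, $(n\,m)_A := n_A\, m_A$, and the identity modification has identity components; these are visibly modifications and assemble the parallel lax transformations $F\Rightarrow G$ into a category, namely the purported hom-category of $\mathbf{Lax}(\mathfrak A,\mathfrak B)$. Horizontal composition of modifications $m\colon\alpha\Rrightarrow\alpha'$ over $F\Rightarrow G$ and $m'\colon\gamma\Rrightarrow\gamma'$ over $G\Rightarrow H$ is defined by horizontal composition in $\mathfrak B$, namely $(m'\ast m)_A := m'_A\ast m_A$, and I would check that the modification compatibility condition survives this pasting, once more by interchange.

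Finally, the $2$-category axioms—associativity and unitality of $1$-cell composition, associativity and unitality of vertical composition of modifications, and the middle-four interchange law relating the two compositions of modifications—each reduce to the corresponding identity of $1$-cells or $2$-cells evaluated objectwise in $\mathfrak B$, and hence hold because $\mathfrak B$ is a $2$-category. The main obstacle is the first verification: confirming that the composite $\gamma\alpha$ satisfies the composable-arrows axiom, which demands a careful pasting argument combining the two hexagonal axioms for $\alpha$ and $\gamma$ with repeated use of interchange; once this is established, the remaining checks are routine bookkeeping in the hom-categories of $\mathfrak B$.
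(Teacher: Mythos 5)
Your proposal is correct and follows essentially the same route as the paper: composition of lax natural transformations is defined componentwise with the coherence cell at $f\colon A\to B$ given by pasting the whiskered cells $\gamma_B\ast\alpha_f$ and $\gamma_f\ast\alpha_A$ (your pasting is the right one, even if the phrase ``following'' momentarily suggests the non-composable order), and vertical and horizontal composition of modifications are taken componentwise in $\mathfrak B$, with all axioms, including interchange, inherited objectwise from $\mathfrak B$. The paper's proof is just a terser version of exactly this argument.
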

\begin{proof}  Composition of lax natural transformations is well-defined.  Taking $\alpha\colon F\Rightarrow G$ and $\beta\colon G\Rightarrow H$, declare $(\beta\alpha)_A:=\beta_A\alpha_A$, as expected, and take the 2-cell for lax naturality at an arrow $f\colon A\to B$ to be the juxtaposition of 2-cells 
$$\begin{tikzpicture}
\node(1){$FA$};
\node(2)[node distance=1.2in, right of=1]{$FB$};
\node(3)[node distance=.8in, below of=1]{$GA$};
\node(4)[node distance=.8in, below of=2]{$GB$};
\node(5)[node distance=.6in, right of=1]{$$};
\node(6)[node distance=.34in, below of=5]{$\alpha_f$};
\node(7)[node distance=.46in, below of=5]{$\Rightarrow$};
\node(8)[node distance=.8in, below of=3]{$HA$};
\node(9)[node distance=.8in, below of=4]{$HB$};
\node(10)[node distance=.8in, below of=6]{$\beta_{f}$};
\node(11)[node distance=.8in, below of=7]{$\Rightarrow$};
\draw[->](1) to node [above]{$Ff$}(2);
\draw[->](1) to node [left]{$\alpha_A$}(3);
\draw[->](2) to node [right]{$\alpha_B$}(4);
\draw[->](3) to node [fill=white]{$Gf$}(4);
\draw[->](3) to node [left]{$\beta_{A}$}(8);
\draw[->](4) to node [right]{$\beta_{B}$}(9);
\draw[->](8) to node [below]{$Hf$}(9);
\end{tikzpicture}$$ 
The lax naturality conditions are satisfied because they are satisfied by $\alpha$ and $\beta$.  Vertical composition of modifications is given by vertical composition of 2-cells in $\mathfrak B$; similarly, horizontal composition is given by horizontal composition in $\mathfrak B$.  The interchange law follows by interchange in $\mathfrak B$.  \end{proof}

\begin{remark}[On Notation]  Perhaps the notation `$\mathbf{Lax}$' is ultimately not well-chosen, given that, upon seeing it out of context, one might suppose it to indicate some $n$-category of $(n-1)$-categories with lax functors, lax transformations, and other higher cells.  Our justification for using it anyway is threefold.  Having defined it in this paper, we intend that it be interpreted in the way defined above.  Second, there is no other lax concept under consideration in the paper, so it is unambiguous in this context.  Finally, the obvious alternative is to subscript the notation `$\mathbf{2Cat}$' with indicators such as `$lax$', or perhaps just `$l$' to indicate the inclusion of lax natural transformations; however, this again could be read out of context for lax functors, so we are back to the original objection.  Moreover, subscripting does not seem to emphasize notationally the seriousness of the shift in perspective initiated by the change of the base category to include lax natural transformations.  Ultimately, our perspective is that `$\mathbf{2Cat}$' should be defined to indicate (some) lax structure, since this appears to us to be more fundamental and moreover in concert with the ``laxification" that occurs with the dimensional jump from $\mathbf{Set}$ to $\mathbf{Cat}$.  However,  the use of `$\mathbf{2Cat}$' is well-established as indicating only strictness, so we do not want to prematurely coopt the notation, potentially raising ire and causing unnescessary additional confusion.
\end{remark}

\subsection{The Ambient 3-Dimensional Setting}

This subsection presents the 3-categorical forum for the work of the paper.  Basically, it seems that it is usual to take $\mathbf{2Cat}$, or some "up-to-isomorphism" weakening, as the base 3-dimensional categorical structure in or over which to do work on 2-categories.  Here we construct an alternative setting, taking lax natural transformations as the primary notion.  The main results of the section show that together with 2-functors and modifications these form a 3-category; and that cotensors with $\mathbf 2$ in this 3-category are given by 2-categories of squares.

\begin{define}[Lax Functor] \label{lax functor} A \textbf{(normalized) lax functor} between 2-categories $F\colon \mathfrak A\to\mathfrak B$ makes object, arrow, and 2-cells assignments $A\mapsto FA$, $f\mapsto Ff$, and $\alpha \mapsto F\alpha$ and comes equipped with coherence 2-cells $\phi_{f,g}\colon FgFf\Rightarrow F(gf)$ for any two composable arrows $f$ and $g$, all satisfying the following conditions.
\begin{enumerate}
\item $F$ strictly preserves domains, codomains, sources and targets, identity morphisms and 2-cells, and vertical composition of 2-cells.
\item For composable arrows $f\colon A\to B$, $g\colon B\to C$ and $h\colon C\to D$ of $\mathfrak K$, there is an equality of composite coherence 2-cells
$$\begin{tikzpicture}
\node(1){$FB$};
\node(2)[node distance=1.2in, right of=1]{$FC$};
\node(3)[node distance=1in, below of=1]{$FA$};
\node(4)[node distance=1in, below of=2]{$FD$};
\node(5)[node distance=.4in, right of=1]{$$};
\node(6)[node distance=.5in, right of=5]{$$};
\node(7)[node distance=.3in, below of=5]{$\Downarrow \phi_{f,g}$};
\node(8)[node distance=.7in, below of=6]{$\Downarrow \phi_{gf,h}$};
\node(9)[node distance=2in, right of=2]{$FB$};
\node(10)[node distance=1.2in, right of=9]{$FC$};
\node(11)[node distance=1in, below of=9]{$FA$};
\node(12)[node distance=1in, below of=10]{$FD.$};
\node(13)[node distance=.35in, right of=9]{$$};
\node(14)[node distance=.7in, below of=13]{$\Downarrow \phi_{f,hg}$};
\node(15)[node distance=.5in, right of=13]{$$};
\node(16)[node distance=.3in, below of=15]{$\Downarrow \phi_{g,h}$};
\node(17)[node distance=1in, right of=2]{$$};
\node(18)[node distance=.5in, below of=17]{$=$};
\draw[->](1) to node [above]{$Fg$}(2);
\draw[->](3) to node [left]{$Ff$}(1);
\draw[->](2) to node [right]{$Fh$}(4);
\draw[->](3) to node [below]{$F(hgf)$}(4);
\draw[->](3) to node [sloped, fill=white]{$F(gf)$}(2);
\draw[->](9) to node [above]{$Fg$}(10);
\draw[->](9) to node [sloped, fill=white]{$F(hg)$}(12);
\draw[->](11) to node [left]{$Ff$}(9);
\draw[->](10) to node [right]{$Fh$}(12);
\draw[->](11) to node [below]{$F(hgf)$}(12);
\end{tikzpicture}$$
\item For horizontally composable 2-cells $\alpha\colon f\Rightarrow g$ and $\beta\colon h\Rightarrow k$ with $f,g\colon A\rightrightarrows B$ and $h,k\colon B\rightrightarrows C$, there is an equality of 2-cells
$$\begin{tikzpicture}
\node(1){$FA$};
\node(2)[node distance=.5in, right of=1]{$\Downarrow F\alpha$};
\node(3)[node distance=.5in, right of=2]{$FB$};
\node(4)[node distance=.5in, right of=3]{$\Downarrow F\beta$};
\node(5)[node distance=.5in, right of=4]{$FC$};
\node(6)[node distance=.4in, below of=3]{$\Downarrow \phi_{g,k}$};
\node(7)[node distance=1.5in, right of=5]{$FA$};
\node(8)[node distance=.75in, right of=7]{$$};
\node(9)[node distance=.75in, right of=8]{$FC$};
\node(10)[node distance=.3in, above of=8]{$\Downarrow \phi_{f,h}$};
\node(10)[node distance=.3in, below of=8]{$\Downarrow F(\beta\ast\alpha)$};
\node(11)[node distance=.75in, right of=5]{$=$};
\draw[->, bend left](1) to node [above]{$Ff$}(3);
\draw[->,bend right](1) to node [below]{$Fg$}(3);
\draw[->,bend left](3) to node [above]{$Fh$}(5);
\draw[->,bend right](3) to node [below]{$Fk$}(5);
\draw[->, bend right=100](1) to node [below]{$F(kg)$}(5);
\draw[->,bend left=100](7) to node [above]{$FgFf$}(9);
\draw[->](7) to node [fill=white]{$F(gf)$}(9);
\draw[->,bend right=100](7) to node [below]{$F(kg)$}(9);
\end{tikzpicture}$$
\end{enumerate}
Let $|\mathbf{2Cat}_{lax}|$ denote the 1-category of 2-categories and lax functors between them.  
\end{define}
\begin{remark}  The notation `$|\mathbf{2Cat}_{lax}|$' is used with out double `$|$' because 2-categories with lax functors cannot be made into a 3-category with any reasonable notion of transformation.  See \cite{ProblemWithLax}.
\end{remark}

\begin{remark}  The category $|\mathbf{2Cat}_{lax}|$ is cartesian monoidal with ordinary cartesian products of 2-categories giving the product.  Thus, the following makes sense.
\end{remark}

\begin{define}  A \textbf{lax 3-category} is a category enriched in $|\mathbf{2Cat}_{lax}|$.
\end{define}

Although certainly every ordinary 3-category is an obvious (and trivial) example, the data for the main example of this paper is given in the following development.

\begin{construction}[Data for 2-Categories with Lax Natural Transformations] \label{constructions for LAX}  The hom-categories for a lax 3-category will be the 2-categories $\mathbf{Lax}(\mathfrak A,\mathfrak B)$.  Required for enrichment are composition and identity morphisms in $|\mathbf{2Cat}_{lax}|$.  First fix 2-categories $\mathfrak A$, $\mathfrak B$, and $\mathfrak C$.  Construct what will be a lax functor 
\begin{equation}
-\otimes-\colon \mathbf{Lax}(\mathfrak B,\mathfrak C)\times \mathbf{Lax}(\mathfrak A,\mathfrak B)\to \mathbf{Lax}(\mathfrak A,\mathfrak C)
\end{equation}
in the following way.  On objects, i.e. composable pairs $(G,F)$ of 2-functors, take $G\otimes F$ to be the ordinary composition $GF$.  For horizontally composable lax natural transformations $\alpha\colon F\Rightarrow H$ and $\beta\colon G\Rightarrow K$ with $F,H\colon \mathfrak A\rightrightarrows \mathfrak B$ and $G,K\colon \mathfrak B\rightrightarrows \mathfrak C$, take $\beta\otimes\alpha$ to have components 
\[ (\beta\otimes\alpha)_A:=\beta_{HA}G\alpha_A
\] 
indexed over $A\in\mathfrak A_0$.  Given a morphism $f\colon A\to B$ of $\mathfrak A$, the 2-cell $(\beta\otimes\alpha)_f$ required for lax naturality is then the composite 2-cell
$$\begin{tikzpicture}
\node(1){$GFA$};
\node(2)[node distance=1.2in, right of=1]{$GFB$};
\node(3)[node distance=.8in, below of=1]{$GHA$};
\node(4)[node distance=.8in, below of=2]{$GHB$};
\node(5)[node distance=.6in, right of=1]{$$};
\node(6)[node distance=.34in, below of=5]{$G\alpha_f$};
\node(7)[node distance=.46in, below of=5]{$\Rightarrow$};
\node(8)[node distance=.8in, below of=3]{$KHA$};
\node(9)[node distance=.8in, below of=4]{$KHB$};
\node(10)[node distance=.8in, below of=6]{$\beta_{Hf}$};
\node(11)[node distance=.8in, below of=7]{$\Rightarrow$};
\draw[->](1) to node [above]{$GFf$}(2);
\draw[->](1) to node [left]{$G\alpha_A$}(3);
\draw[->](2) to node [right]{$G\alpha_B$}(4);
\draw[->](3) to node [fill=white]{$GHf$}(4);
\draw[->](3) to node [left]{$\beta_{HA}$}(8);
\draw[->](4) to node [right]{$\beta_{HB}$}(9);
\draw[->](8) to node [below]{$KHf$}(9);
\end{tikzpicture}$$ 
The conditions for lax naturality are satisfied since they are satisfied by the $\alpha_f$ and $\beta_{Hf}$ over morphisms $f\colon A\to B$.  Given further lax natural transformations $\gamma\colon F\Rightarrow H$ and $\delta\colon G\Rightarrow K$ and two modifications $m\colon \alpha \Rrightarrow \gamma$ and $n\colon \beta\Rrightarrow \delta$, define what will be the component of a modification $n\otimes m$ as the horizontal composite 
\[ (n\otimes m)_A:=n_{HA}\ast Gm_A
\]
over $A\in\mathfrak A_0$.  That this is well-defined, that is, satisfies the modification condition, is just a result of the fact that both $m$ and $n$ are modifications and that $G$ is functorial on 2-cells.
\end{construction}

\begin{lemma}  The assignments of Construction \ref{constructions for LAX} make
\begin{equation}
-\otimes-\colon \mathbf{Lax}(\mathfrak B,\mathfrak C)\times \mathbf{Lax}(\mathfrak A,\mathfrak B)\to \mathbf{Lax}(\mathfrak A,\mathfrak C)
\end{equation}
into a lax functor of 2-categories in the sense of Definition \ref{lax functor}.
\end{lemma}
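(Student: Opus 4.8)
The only piece of data not already fixed in Construction \ref{constructions for LAX} is the family of coherence 2-cells demanded by Definition \ref{lax functor}, so the plan is first to exhibit these and then to check the three axioms. Fix composable 1-cells $(\beta,\alpha)\colon (G,F)\to (K,H)$ and $(\beta',\alpha')\colon (K,H)\to (L,J)$ of the product 2-category $\mathbf{Lax}(\mathfrak B,\mathfrak C)\times\mathbf{Lax}(\mathfrak A,\mathfrak B)$, so that $\alpha\colon F\Rightarrow H$, $\alpha'\colon H\Rightarrow J$, $\beta\colon G\Rightarrow K$ and $\beta'\colon K\Rightarrow L$. Evaluating at $A\in\mathfrak A_0$, the vertical composite $(\beta'\otimes\alpha')\cdot(\beta\otimes\alpha)$ has component $\beta'_{JA}\circ K\alpha'_A\circ\beta_{HA}\circ G\alpha_A$, whereas $\otimes$ applied to the composite $(\beta'\beta)\otimes(\alpha'\alpha)$ has component $\beta'_{JA}\circ\beta_{JA}\circ G\alpha'_A\circ G\alpha_A$, using that $G$ is a 2-functor and the componentwise formula for vertical composition of lax natural transformations. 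These agree except in the middle, where they differ by the two legs $K\alpha'_A\circ\beta_{HA}$ and $\beta_{JA}\circ G\alpha'_A$ of the lax-naturality square of the \emph{lower} transformation $\beta$ at the morphism $\alpha'_A\colon HA\to JA$. I would therefore define the coherence cell $\phi_{(\beta,\alpha),(\beta',\alpha')}$ to have $A$-component the whiskered 2-cell $\beta'_{JA}\ast\beta_{\alpha'_A}\ast G\alpha_A$, with orientation determined by the conventions of Definitions \ref{LAXNATTRANSF} and \ref{lax functor}.

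The first task is to verify that this family of component 2-cells genuinely assembles into a modification, i.e.\ a 2-cell of $\mathbf{Lax}(\mathfrak A,\mathfrak C)$; this is where most of the naturality bookkeeping lives, and it follows from the naturality of the coherence cells $\beta_{(-)}$ together with interchange in $\mathfrak C$. Next I would dispatch the strict-preservation clause (condition 1 of Definition \ref{lax functor}). Preservation of sources, targets, domains and codomains is immediate from the component formulas. The identity 1-cell at $(G,F)$ is $(1_G,1_F)$, whose image has $A$-component $(1_G)_{FA}\circ G(1_F)_A=1_{GFA}$, so identities are preserved; and because lax-naturality cells are trivial at identity morphisms and at identity transformations, the coherence cell $\phi$ is itself an identity whenever either argument is an identity, which gives normalization. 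Strict preservation of vertical composition of 2-cells reduces to the equality $(n'\cdot n)\otimes(m'\cdot m)=(n'\otimes m')\cdot(n\otimes m)$ on components, which unwinds from the formula $(n\otimes m)_A=n_{HA}\ast Gm_A$ by the interchange law in $\mathfrak C$ and the 2-functoriality of $G$.

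It then remains to check the two genuine coherence axioms. For the associativity condition (condition 2 of Definition \ref{lax functor}), I would take three composable 1-cells and compare the two bracketings of the composite coherence cell; componentwise this collapses precisely onto the composition axiom (condition 1 of Definition \ref{LAXNATTRANSF}) for the coherence cells of $\beta$, after whiskering by the appropriate legs of $\beta'$ and $G\alpha$ and invoking the 2-functoriality of $G$ and $K$. For the horizontal-composition compatibility (condition 3 of Definition \ref{lax functor}), I would evaluate both sides on a horizontally composable pair of 2-cells $(n,m)$ and $(n',m')$ of the product; the required equality unwinds, component by component, to the lax-naturality axiom for 2-cells (condition 2 of Definition \ref{LAXNATTRANSF}) for $\beta$, together with the two modification conditions.

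The main obstacle is the associativity axiom together with the verification that $\phi$ is a modification: both are sizeable pasting-diagram computations in $\mathfrak C$, and the key is to recognize that every discrepancy produced by $\otimes$ is traceable to a single failure of strict naturality of the lower transformation, so that each lax-functor axiom collapses onto the corresponding lax-naturality axiom of that transformation once the outer legs are whiskered away. Keeping careful track of which transformation contributes the coherence cell---always the lower one $\beta$, evaluated at components of the upper one $\alpha'$---is what makes the reductions go through.
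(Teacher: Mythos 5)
Your proposal is correct and follows essentially the same route as the paper's proof: you exhibit the identical coherence cell, namely the lax-naturality cell of the outer transformation of the first factor at a component of the inner transformation of the second factor, whiskered on both sides (your $\beta'_{JA}\ast\beta_{\alpha'_A}\ast G\alpha_A$ is the paper's $\delta_{LA}\ast\beta_{\gamma_A}\ast G\alpha_A$), you verify the modification property via the 2-cell lax-naturality axiom, and you reduce associativity to the composition axiom for the coherence cells and the horizontal-composition axiom to the modification conditions plus the 2-cell lax-naturality axiom, exactly as the paper does. The only differences are notational ($\beta',\alpha'$ for the paper's $\delta,\gamma$) and that in the horizontal-composition check the paper invokes the 2-cell axiom for the target transformation $\delta$ of the modification rather than $\beta$ itself, a distinction your generic phrasing about ``the lower transformation'' already absorbs.
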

\begin{proof}  Preservation of domains, codomains, sources, targets and 1- and 2-cell identities are all straightforward to check.  The compatibility cells, however, need to be exhibited and the two conditions of Definition \ref{lax functor} need to be checked.  Given further lax natural transformations $\delta\colon K\Rightarrow M$ and $\gamma\colon H\Rightarrow L$, required is a compatibility cell, that is, a modification $(\delta\otimes \gamma)(\beta\otimes\alpha) \Rrightarrow \delta\beta\otimes \gamma\alpha$.  Unraveling both sides at objects $A\in\mathfrak A_0$, we see that this amounts to giving 2-cells
\[ \delta_{LA}K(\gamma_A)\beta_{HA}G(\alpha_A)\Rightarrow \delta_{LA}\beta_{LA}G\gamma_AG\alpha_A
\]
satisfying the appropriate compatibility condition for a modification.  But there is an evident choice by taking the cell to be $\beta_{\gamma_A}$ as in the diagram
$$\begin{tikzpicture}
\node(1){$GFA$};
\node(2)[node distance=1in, right of=1]{$GHA$};
\node(3)[node distance=.8in, right of=2]{$\Downarrow \beta_{\gamma_A}$};
\node(4)[node distance=.8in, right of=3]{$KLA$};
\node(5)[node distance=1in, right of=4]{$MLA$};
\node(6)[node distance=.5in, above of=3]{$KHA$};
\node(7)[node distance=.5in, below of=3]{$GLA$};
\draw[->](1) to node [above]{$G\alpha_A$}(2);
\draw[->](2) to node [sloped,above]{$\beta_{HA}$}(6);
\draw[->](6) to node [sloped,above]{$K\gamma_A$}(4);
\draw[->](4) to node [above]{$\delta_{LA}$}(5);
\draw[->](2) to node [sloped,below]{$G\gamma_A$}(7);
\draw[->](7) to node [sloped,below]{$\beta_{LA}$}(4);
\end{tikzpicture}$$
The modification condition for a fixed arrow $f\colon A\to B$ of $\mathfrak A$ follows by the second lax naturality condition for the associated cell $\gamma_f$.  That the associativity compatibility condition for lax functoriality on 1-cells is satisfied follows by the lax naturality of $\beta$ and the definition of composition of lax natural transformations.

Preservation of vertical composition of 2-cells (that is, modifications) is straightforward using the fact that all the 2-functors involved strictly preserve vertical composition of 2-cells.  However, the compatibility condition in Definition \ref{lax functor} for horizontal composition is less clear.  Thus, suppose that further modifications $p\colon \tau\Rrightarrow \chi$ and $l\colon \sigma\Rrightarrow \rho$ are given between lax natural transformations $\tau,\chi\colon K\Rightarrow M$ and $\sigma,\rho\colon H\Rightarrow L$.  One computes $(p\ast n)\otimes (l\ast m)$ on the one hand and $(p\otimes l)\ast (n\otimes m)$ on the other, and adding in the coherence cells as defined above, the compatibility condition will follow from an equality of the 2-cells
$$\begin{tikzpicture}
\node(1){$GHA$};
\node(2)[node distance=1.4in, right of=1]{$GLA$};
\node(3)[node distance=1in, below of=1]{$KHA$};
\node(4)[node distance=1in, below of=2]{$KLA$};
\node(5)[node distance=.5in, right of=1]{$$};
\node(6)[node distance=.25in, above of=5]{$Gl_A\Uparrow$};
\node(7)[node distance=.5in, below of=5]{$\Uparrow \beta_{\sigma_A}$};
\node(8)[node distance=.9in, right of=7]{$\mathclap{\substack{n_{LA} \\ \Rightarrow }}$};
\node(9)[node distance=2in, right of=7]{$=$};
\node(10)[node distance=2in, right of=2]{$GHA$};
\node(11)[node distance=1.4in, right of=10]{$GLA$};
\node(12)[node distance=1in, below of=10]{$KHA$};
\node(13)[node distance=1in, below of=11]{$KLA$};
\node(14)[node distance=3.8in, right of=7]{$\Uparrow \delta_{\rho_A}$};
\node(15)[node distance=.75in, below of=14]{$\Uparrow Kl_A $};
\node(16)[node distance=.9in, left of=14]{$\mathclap{\substack{n_{HA} \\ \Rightarrow }}$};
\draw[->](1) to node [below]{$G\sigma_A$}(2);
\draw[->,bend left=90](1) to node [above]{$G\rho_A$}(2);
\draw[->](1) to node [left]{$\beta_{HA}$}(3);
\draw[->,bend left](2) to node [right]{$\delta_{LA}$}(4);
\draw[->,bend right](2) to node [left]{$\beta_{LA}$}(4);
\draw[->](3) to node [below]{$K\sigma_A$}(4);
\draw[->](10) to node [above]{$G\rho_A$}(11);
\draw[->,bend left](10) to node [right]{$\delta_{HA}$}(12);
\draw[->,bend right](10) to node [left]{$\beta_{HA}$}(12);
\draw[->](11) to node [right]{$\delta_{LA}$}(13);
\draw[->](12) to node [above]{$K\rho_A$}(13);
\draw[->,bend right=90](12) to node [below]{$K\sigma_A$}(13);
\end{tikzpicture}$$
That this equality does in fact hold is now easy to establish, first using the modification condition for $n$ at $\sigma_A$ and then by the using the second lax naturality condition for $\delta$ at the 2-cell $l_A$.  \end{proof}

\begin{theo}  The lax composition functors as in Construction \ref{constructions for LAX} make 2-categories, 2-functors, lax natural transformations and modifications into a lax 3-category, denoted by $\mathbf{Lax}$. 
\end{theo}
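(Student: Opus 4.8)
The plan is to check directly that the data of Construction \ref{constructions for LAX} satisfy the axioms making 2-categories into a category enriched in the cartesian monoidal $1$-category $|\mathbf{2Cat}_{lax}|$, which is what the definition of a lax 3-category demands. The hom-objects are the 2-categories $\mathbf{Lax}(\mathfrak A,\mathfrak B)$, and the composition morphisms in $|\mathbf{2Cat}_{lax}|$ are the lax functors $\otimes$, already shown to be lax functors by the preceding lemma. Because the monoidal structure is cartesian, its associator and unitors are the canonical identifications, so what remains is to produce identity morphisms and to verify the associativity and unit coherences as \emph{strict} equalities of lax functors.

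First I would record the identities. The monoidal unit is the terminal 2-category, and for each $\mathfrak A$ the identity $j_{\mathfrak A}$ is the lax functor from it into $\mathbf{Lax}(\mathfrak A,\mathfrak A)$ selecting the identity 2-functor $1_{\mathfrak A}$, its identity lax natural transformation, and its identity modification; since the source is terminal the coherence cells are forced to be identities, so the conditions of Definition \ref{lax functor} hold trivially. The unit laws are then immediate: on objects $\otimes$ is composition of 2-functors, for which $1_{\mathfrak A}$ is a strict two-sided unit, while on a lax transformation $\alpha\colon F\Rightarrow H$ and a modification $m$ the component formulas $(\beta\otimes\alpha)_A=\beta_{HA}\,G\alpha_A$ and $(n\otimes m)_A=n_{HA}\ast Gm_A$ collapse, against an identity factor, to $\alpha_A$ and $m_A$ respectively. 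Finally the coherence cells $\beta_{\gamma_A}$ of $\otimes$ degenerate to identities when one transformation is an identity, so both composites $\otimes\circ(j\times\mathrm{id})$ and $\otimes\circ(\mathrm{id}\times j)$ equal the respective unitor on the nose.

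The substance of the argument, and the step I expect to be the main obstacle, is associativity: the two lax functors
\[ \otimes\circ(\otimes\times\mathrm{id}), \qquad \otimes\circ(\mathrm{id}\times\otimes)\colon \mathbf{Lax}(\mathfrak C,\mathfrak D)\times\mathbf{Lax}(\mathfrak B,\mathfrak C)\times\mathbf{Lax}(\mathfrak A,\mathfrak B)\longrightarrow\mathbf{Lax}(\mathfrak A,\mathfrak D) \]
must coincide. On objects this is the strict associativity of 2-functor composition. For a triple of composable lax natural transformations, say $\alpha\colon F\Rightarrow H$, $\beta\colon G\Rightarrow K$ and $\gamma\colon P\Rightarrow Q$, both bracketings of the component formula evaluate to $\gamma_{KHA}\,P\beta_{HA}\,PG\alpha_A$, the two expressions agreeing by functoriality of the outermost 2-functor $P$ applied to the composite $\beta_{HA}\,G\alpha_A$; the computation for modifications is identical in form.

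The one genuinely nontrivial point is that the induced coherence cells of the two composite lax functors agree. Unwinding each bracketing expresses its coherence modification, componentwise, as a pasting of naturality 2-cells of the shape $\beta_{\gamma_A}$ together with images of such cells under the ambient 2-functors; the required equality is then precisely an instance of the first lax-naturality axiom of Definition \ref{LAXNATTRANSF} for composable arrows, combined with middle-four interchange in the target 2-category $\mathfrak D$. This is a diagram chase of exactly the same flavour as the horizontal-composition compatibility already dispatched in the preceding lemma, and I expect it to introduce no new ingredient beyond careful bookkeeping of the coherence pastings.
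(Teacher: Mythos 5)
Your proposal is correct and follows essentially the same route as the paper's proof: a direct verification of the $|\mathbf{2Cat}_{lax}|$-enrichment axioms, with associativity checked on objects via strict associativity of 2-functor composition and on 1- and 2-cells via the component formulas $(\beta\otimes\alpha)_A=\beta_{HA}G\alpha_A$ and $(n\otimes m)_A=n_{HA}\ast Gm_A$ together with 2-functoriality of the outermost factor, and the unit laws by inspection. If anything you are more careful than the paper on one point: the paper compares the two composite lax functors only on their cell assignments and never explicitly addresses the equality of their coherence 2-cells (the pastings built from the $\beta_{\gamma_A}$), which you rightly single out as the one nontrivial step and correctly resolve via the lax-naturality axioms of Definition \ref{LAXNATTRANSF} and interchange in the target 2-category.
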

\begin{proof}  It remains to check the pentagonal associativity condition and the identity conditions.  But these are now easy.  For 2-categories $\mathfrak A$, $\mathfrak B$, $\mathfrak C$ and $\mathfrak D$, the associativity condition asserts that there is an equality of lax functors
$$\begin{tikzpicture}
\node(1){$(\mathbf{Lax}(\mathfrak C,\mathfrak D)\times \mathbf{Lax}(\mathfrak B,\mathfrak C))\times \mathbf{Lax}(\mathfrak A,\mathfrak B)$};
\node(2)[node distance=3.5in, right of=1]{$\mathbf{Lax}(\mathfrak C,\mathfrak D)\times (\mathbf{Lax}(\mathfrak B,\mathfrak C)\times \mathbf{Lax}(\mathfrak A,\mathfrak B))$};
\node(3)[node distance=.8in, below of=1]{$\mathbf{Lax}(\mathfrak B,\mathfrak D)\times \mathbf{Lax}(\mathfrak B,\mathfrak C)$};
\node(4)[node distance=.8in, below of=2]{$\mathbf{Lax}(\mathfrak C,\mathfrak D)\times \mathbf{Lax}(\mathfrak A,\mathfrak C).$};
\node(5)[node distance=1.75in, right of=1]{$$};
\node(6)[node distance=.6in, below of=5]{$=$};
\node(7)[node distance=.6in, below of =6]{$\mathbf{Lax}(\mathfrak A,\mathfrak D)$};
\draw[->](1) to node [above]{$\cong$}(2);
\draw[->](1) to node [left]{$\otimes\times 1$}(3);
\draw[->](2) to node [right]{$1\times\otimes$}(4);
\draw[->](3) to node [below]{$\otimes$}(7);
\draw[->](7) to node [below]{$\otimes$}(4);
\end{tikzpicture}$$
But this follows readily.  For composition of 2-functors is strictly associative.  At the level of 1-cells, it is a direct computation using the definition of $\otimes$.  Take 2-functors $F,K\colon \mathfrak A\rightrightarrows \mathfrak B$, $G,L\colon \mathfrak B\rightrightarrows \mathfrak C$ and $H,M\colon \mathfrak C\rightrightarrows \mathfrak D$ with lax natural transformations $\alpha\colon F\Rightarrow K$, $\beta\colon G\Rightarrow L$ and $\gamma\colon H\Rightarrow L$.  On the one hand, computing around the counterclockwise direction of the diagram, we have
\[ ((\gamma\otimes \beta)\otimes \alpha)_A = (\gamma\otimes\beta)_{KA} HG\alpha_A = \gamma_{LKA}H(\beta_{KA})H(G(\alpha_A)
\]
and around the clockwise direction on the other hand
\[ (\gamma\otimes(\beta\otimes \alpha))_A=\gamma_{LKA}H(\beta\otimes \alpha)_A = H(\beta_{KA}\alpha_A).
\]
These results are evidently the same since $H$ is a 2-functor.  The computation at the 2-cell level is analogous.  The identity conditions also follow by direct inspection. \end{proof}

\begin{remark} The reason for our focus on $\mathbf{Lax}$ over $\mathbf{2Cat}$ is summarized in the next result, showing that the lax comma construction, hence the squares 2-category of Example \ref{2cat of squares example}, is universal amid lax natural transformations and not just 2-natural transformations.  In fact this universality is much like the universality of the cotensor with $\mathbf 2$ in $\mathbf{Lax}$.  And the importance is that 2-fibrations and discrete 2-fibrations will be precisely the algebras for certains actions of $\mathbf{Sq}(\mathfrak B)$ but not $\mathbf{2Cat}(\mathbf 2,\mathfrak B)$.  
\end{remark}

Now, specifically, the universal property of $F/G$ in $\mathbf{Lax}$ is a ``laxification" of the universal property of the 2-comma category, proved in Proposition \ref{up of 2-comma}.  As already remarked, what is here called the ``lax comma category" is called the ``2-comma category" in \S I,2.5 of \cite{GrayFormalCats}.  The development in \S I,5.2 of the reference does not appear to completely describe the universality enjoyed by this construction.  The seemingly complete statement is the following.

\begin{prop}[Universal Property of Lax Comma Category; Cf. \S I,5.2 of \cite{GrayFormalCats} and \S 1 p.108 of \cite{StreetFibrations}] \label{up of lax comma} Given 2-functors $F\colon \mathfrak A\to \mathfrak B$ and $G\colon \mathfrak C\to\mathfrak B$, the lax comma category $F/G$ of Construction \ref{lax comma cat construction} is 1-, 2- and 3-dimensionally universal in the following sense.
\begin{enumerate}
\item  Given 2-functors $H\colon \mathfrak D\to\mathfrak A$ and $K\colon \mathfrak D\to \mathfrak C$ and any lax transformation $\theta\colon FH\Rightarrow GK$, there is a unique 2-functor $U\colon \mathfrak D\to F/G$ such that $\theta = \lambda \ast U$.
\item Given lax natural transformations $\xi$ and $\zeta$ together with a modification $m$ as in the diagram
$$\begin{tikzpicture}
\node(1){$\mathfrak D$};
\node(2)[node distance=.6in, right of=1]{$\Uparrow \xi$};
\node(3)[node distance=.6in, right of=2]{$\mathfrak C$};
\node(4)[node distance=.5in, above of=2]{$F/G$};
\node(5)[node distance=.5in, below of=2]{$F/G$};
\node(6)[node distance=.6in, right of=5]{$\Uparrow\lambda$};
\node(7)[node distance=.6in, right of=6]{$\mathfrak B$};
\node(8)[node distance=.5in, below of=6]{$\mathfrak A$};
\node(9)[node distance=1in, right of=7]{$\mathfrak D$};
\node(10)[node distance=.6in, right of=9]{$\Uparrow\zeta$};
\node(11)[node distance=.6in, right of=10]{$\mathfrak A$};
\node(12)[node distance=.5in, above of=10]{$F/G$};
\node(13)[node distance=.5in, below of=10]{$F/G$};
\node(14)[node distance=.6in, right of=12]{$\Uparrow\lambda$};
\node(15)[node distance=.6in, right of=14]{$\mathfrak A$};
\node(16)[node distance=.5in, above of=14]{$\mathfrak C$};
\node(17)[node distance=.5in, right of=7]{$\Rrightarrow$};
\node(18)[node distance=.12in, above of=17]{$m$};
\draw[->](1) to node [above]{$V\;\;\;$}(4);
\draw[->](1) to node [below]{$U\;\;\;$}(5);
\draw[->](4) to node [above]{$\;\;\;\tgt$}(3);
\draw[->](5) to node [below]{$\;\;\;\tgt$}(3);
\draw[->](5) to node [below]{$\src\;\;\;$}(8);
\draw[->](3) to node [above]{$\;\;\;G$}(7);
\draw[->](8) to node [below]{$\;\;\;F$}(7);
\draw[->](9) to node [above]{$V\;\;\;$}(12);
\draw[->](9) to node [below]{$U\;\;\;$}(13);
\draw[->](12) to node [above]{$\;\,\src$}(11);
\draw[->](13) to node [below]{$\;\;\;\src$}(11);
\draw[->](12) to node [above]{$\tgt\;\;\;\;$}(16);
\draw[->](11) to node [below]{$\;\;\;F$}(15);
\draw[->](16) to node [above]{$\;\;\;G$}(15);
\end{tikzpicture}$$
there is a lax natural transformation $\omega\colon U\Rightarrow V$ such that the equations $\tgt\ast \omega = \xi$ and $\src\ast\omega = \zeta$ each hold.
\item Given lax natural transformations $\omega \colon U \Rightarrow V$ and $\chi\colon U \Rightarrow V$ between $U,V\colon \mathfrak D\rightrightarrows F/G$ as above with modfications $m\colon \src\ast \omega \Rrightarrow \src\ast \chi$ and $n\colon \tgt\ast \omega \Rrightarrow \tgt\ast \chi$ satisfying 
\begin{equation} \label{compat} (\lambda\ast V)Fm = Gn(\lambda\ast U)
\end{equation}
there is a unique modification $l\colon \omega\Rrightarrow \chi$ satisfying $\tgt\ast l = n$ and $\src\ast l = m$.
\end{enumerate}
These properties characterize $F/G$ up to isomorphism in $\mathbf{Lax}$.
\end{prop}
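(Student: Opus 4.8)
The plan is to treat the three universality conditions in turn, observing that the first is already in hand, that the second is the only one requiring genuinely new work, and that the third is a verbatim adaptation of the strict case. For condition (1) I would simply invoke Lemma \ref{comma 1-dim universal lemma}, which already establishes exactly this statement: a lax natural transformation $\theta\colon FH\Rightarrow GK$ induces the unique 2-functor $U\colon \mathfrak D\to F/G$ sending $D$ to the object $\theta_D\colon FHD\to GKD$, an arrow $f$ to the coherence square $\theta_f$, and a 2-cell to the pair determined by the second lax-naturality axiom for $\theta$, with $\lambda\ast U=\theta$ by construction.

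For condition (2) the key observation is that the data $(\xi,\zeta,m)$ is precisely the data of a lax natural transformation $\omega\colon U\Rightarrow V$ read off componentwise through $\src$ and $\tgt$. Recall that a morphism $UD\to VD$ in $F/G$ is a triple consisting of an arrow of $\mathfrak A$, an arrow of $\mathfrak C$, and a 2-cell of $\mathfrak B$ filling the corresponding square against $\lambda_{UD}$ and $\lambda_{VD}$; and that a 2-cell of $F/G$ is a cylinder-compatible pair of 2-cells, one in $\mathfrak A$ and one in $\mathfrak C$. So I would define $\omega$ by setting its component at $D$ to be the morphism $(\zeta_D,\xi_D,m_D)$ and its coherence 2-cell at $f$ to be the pair $(\zeta_f,\xi_f)$. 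Here the component $m_D$ of the modification $m$ has exactly the shape of the required square-filler: unwinding the pasting equation displayed in the statement shows that at each $D$ it is a 2-cell $G\xi_D\cdot\lambda_{UD}\Rightarrow\lambda_{VD}\cdot F\zeta_D$, which is just what is needed to make $(\zeta_D,\xi_D,m_D)$ a legitimate morphism of $F/G$. By construction $\src\ast\omega=\zeta$ and $\tgt\ast\omega=\xi$.

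The content, and the step I expect to be the main obstacle, is then checking that this $\omega$ is genuinely lax natural, i.e. that the two lax-naturality axioms of Definition \ref{LAXNATTRANSF} hold for the pairs $(\zeta_f,\xi_f)$. Each axiom unwinds, via the definitions of composition in $F/G$ from Construction \ref{lax comma cat construction}, into a pair of equations, one living in $\mathfrak A$ and one in $\mathfrak C$; the $\mathfrak A$-equation is exactly the corresponding lax-naturality axiom for $\zeta$ and the $\mathfrak C$-equation that for $\xi$. The cylinder compatibility of each pair $(\zeta_f,\xi_f)$ against the square-fillers $m_D$ and $m_{D'}$ is where the modification condition on $m$ enters, combined with the second lax-naturality axioms for $\xi$ and $\zeta$; this is the delicate bookkeeping, entirely parallel to the computation verifying that $\mathbf{Lax}(\mathfrak A,\mathfrak B)$ is a 2-category. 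I would note in passing that, unlike the strict case of Proposition \ref{up of 2-comma}, uniqueness is not available from $(\xi,\zeta)$ alone: the square-fillers are genuine 2-cells rather than forced identities, so they must be supplied externally, and it is exactly the modification $m$ that supplies them, $\omega$ being canonically determined once all of $\xi$, $\zeta$ and $m$ are fixed.

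Finally, condition (3) I would dispatch exactly as the third clause of Proposition \ref{up of 2-comma} was: the compatibility condition \eqref{compat} says that at each $D\in\mathfrak D_0$ there is an equality of composite 2-cells in $\mathfrak B$ relating $Fm_D$ and $Gn_D$ against $\lambda\ast U$ and $\lambda\ast V$, and this equality is precisely the statement that the pair $(m_D,n_D)$ is a 2-cell $\omega_D\Rightarrow\chi_D$ in $F/G$. Taking these as the components of $l$ yields a modification $l\colon\omega\Rrightarrow\chi$, with $\src\ast l=m$ and $\tgt\ast l=n$ immediate; uniqueness follows because $\src$ and $\tgt$ are jointly faithful on 2-cells of $F/G$, and the modification condition for $l$ follows from those for $m$ and $n$ together with naturality. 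The three conditions together then characterize $F/G$ up to isomorphism in $\mathbf{Lax}$.
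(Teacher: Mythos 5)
Your proposal is correct and follows essentially the same route as the paper's proof: condition (1) by citing Lemma \ref{comma 1-dim universal lemma}, condition (3) by the same componentwise argument as in Proposition \ref{up of 2-comma}, and for condition (2) defining $\omega_D=(\zeta_D,\xi_D,m_D)$ with coherence cells $(\zeta_g,\xi_g)$, using the modification condition on $m$ for the cylinder equality and the lax-naturality axioms of $\xi$ and $\zeta$ for those of $\omega$. Your added remark that $m$ supplies the square-fillers that are forced identities in the strict case is a correct and useful observation consistent with the paper.
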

\begin{proof}  The 1-dimensional aspect of the universal property was proved in Lemma \ref{comma 1-dim universal lemma}.  The proof of the 3-dimensional aspect is the same as in the proof of Proposition \ref{up of 2-comma} with suitable adaptations for lax naturality.  Thus, we prove the second condition, that is, the 2-dimensional aspect of the universal property.  Thus, given the data of $\xi$, $\zeta$ and $m$, we need to construction $\omega\colon U\Rightarrow V$.  The component of the modification $m$ at say $D\in \mathfrak D_0$ is a 2-cell
$$\begin{tikzpicture}
\node(1){$\cdot$};
\node(2)[node distance=1in, right of=1]{$\cdot$};
\node(3)[node distance=.8in, below of=1]{$\cdot$};
\node(4)[node distance=.8in, below of=2]{$\cdot$};
\node(5)[node distance=.5in, right of=1]{$$};
\node(6)[node distance=.34in, below of=5]{$m_D$};
\node(7)[node distance=.47in, below of=5]{$\Rightarrow$};
\draw[->](1) to node [above]{$F\zeta_D$}(2);
\draw[->](1) to node [left]{$UD$}(3);
\draw[->](2) to node [right]{$VD$}(4);
\draw[->](3) to node [below]{$G\xi_D$}(4);
\end{tikzpicture}$$
of $\mathfrak B$.  Thus, define the component of $\omega$ at $D\in \mathfrak D_0$ to be the arrow $(\zeta_D,\xi_D,m_D)$ of $F/G$.  Given an arrow $g\colon C\to D$ of $\mathfrak D$, there should be a lax naturality cell from $(\zeta_C,\xi_C,m_C)$ to $(\zeta_D,\xi_D,m_D)$, the arrows of which will be the 2-cells $Ug$ and $Vg$.  The actual 2-cell in $F/G$ is given by the lax naturality cells $\zeta_g$ and $\xi_g$.  This does define the required lax naturality 2-cell in $F/G$ by the equality
$$\begin{tikzpicture}
\node(1){$\cdot$};
\node(2)[node distance=1in, right of=1]{$\cdot$};
\node(3)[node distance=.8in, below of=1]{$\cdot$};
\node(4)[node distance=.8in, below of=2]{$\cdot$};
\node(5)[node distance=.5in, right of=1]{$$};
\node(6)[node distance=.34in, below of=5]{$m_C$};
\node(7)[node distance=.47in, below of=5]{$\Rightarrow$};
\node(8)[node distance=.3in, above of=2]{$\zeta_g \Uparrow$};
\node(9)[node distance=3in, right of=8]{$\cdot$};
\node(10)[node distance=1in, right of=9]{$\cdot$};
\node(11)[node distance=.8in, below of=9]{$\cdot$};
\node(12)[node distance=.8in, below of=10]{$\cdot$};
\node(13)[node distance=.5in, right of=9]{$$};
\node(14)[node distance=.32in, below of=13]{$Ug$};
\node(15)[node distance=.45in, below of=13]{$\Rightarrow$};
\node(16)[node distance=.3in, below of=12]{$\Uparrow\xi_g$};
\node(17)[node distance=2.5in, right of=7]{$=$};
\node(18)[node distance=1in, right of=2]{$\cdot$};
\node(19)[node distance=.8in, below of=18]{$\cdot$};
\node(20)[node distance=1in, right of=6]{$Vg$};
\node(21)[node distance=1in, right of=7]{$\Rightarrow$};
\node(22)[node distance=1in, right of=10]{$\cdot$};
\node(23)[node distance=.8in, below of=22]{$\cdot$};
\node(24)[node distance=1in, right of=14]{$m_g$};
\node(25)[node distance=1in, right of=15]{$\Rightarrow$};
\node(26)[node distance=.6in, above of=2]{$\cdot$};
\node(27)[node distance=.6in, below of=12]{$\cdot$};
\draw[->](1) to node [below]{$\zeta_C$}(2);
\draw[->](1) to node [left]{$UC$}(3);
\draw[->](2) to node [right]{$VC$}(4);
\draw[->](3) to node [below]{$\xi_C$}(4);
\draw[->](1) to node [above,sloped]{$\src\,Ug$}(26);
\draw[->](26) to node [above]{$\zeta_D$}(18);
\draw[->](9) to node [above]{$\src\,Ug$}(10);
\draw[->](11) to node [below]{$\xi_C$}(27);
\draw[->](27) to node [below,sloped]{$\tgt\,Vg$}(23);
\draw[->](9) to node [left]{$UC$}(11);
\draw[->](10) to node [right]{$UD$}(12);
\draw[->](11) to node [above]{$\tgt\,Ug$}(12);
\draw[->](2) to node [below]{$\src\,Vg$}(18);
\draw[->](18) to node [right]{$VD$}(19);
\draw[->](4) to node [below]{$\tgt\,Vg$}(19);
\draw[->](10) to node [above]{$\zeta_D$}(22);
\draw[->](22) to node [right]{$UD$}(23);
\draw[->](12) to node [above]{$\xi_D$}(23);
\end{tikzpicture}$$
which holds by the modification condition for $m$ since the other cells on either side are the coherence cells for the composite lax natural transformations.  The compatibility conditions for lax naturality as in Definition \ref{LAXNATTRANSF} are satisfied since the same conditions are satisfied by $\xi$ and $\zeta$.  The lax natural transformation $\omega$ is by construction evidently the unique one with the desired properties. \end{proof}

\begin{remark}  Thus, the proposition says that the lax comma category for $F=G=1_{\mathfrak B}$ is a representing object in the sense that there is an isomorphism
\[  \lax(\mathfrak A,\sq(\mathfrak B))\cong \lax(\mathbf 2,\lax(\mathfrak A,\mathfrak B))
\]
of 2-categories.  This is an analogue of the non-elementary statement of the universal property of the 2-comma object for $1_{\mathfrak B}$ in Remark \ref{remark on enriched cotensor}.
\end{remark}

\section{Discrete 2-Fibrations}

\label{section: disc 2-fibs}

This section sets out the main definition of the paper, namely, that of a discrete 2-fibration.  The ultimate object is to state and prove the main result, Theorem \ref{theorem duality for disc 2-fibrations}, showing that discrete 2-fibrations correspond, roughly speaking, to category-valued 2-functors indexed by 2-categories.  This result will be reconciled with Buckley's notion of a 2-fibration in the last subsection.  Along the way, we give one justification for the correctness of our definition in the form of a ``Chevalley condition" categorifying that for ordinary discrete fibrations.

\subsection{Definition}

As a preliminary, recall that for any functor $F\colon \C^{op}\to\mathbf{Set}$, the associated category of elements fits into a comma square
$$\begin{tikzpicture}
\node(1){$\elt(F)^{op}$};
\node(2)[node distance=1in, right of=1]{$\C^{op}$};
\node(3)[node distance=.7in, below of=1]{$\mathbf 1$};
\node(4)[node distance=.7in, below of=2]{$\mathbf{Set}$};
\node(5)[node distance=.5in, right of=1]{$$};
\node(6)[node distance=.35in, below of=5]{$\Rightarrow$};
\draw[->](1) to node [above]{$\Pi^{op}$}(2);
\draw[->](1) to node [left]{$$}(3);
\draw[->](2) to node [right]{$F$}(4);
\draw[->](3) to node [below]{$\ast$}(4);
\end{tikzpicture}$$
where $\ast\colon \mathbf 1\to\mathbf{Set}$ denotes the inclusion of the one-element set.  Notice that $\Pi^{op}$ is a discrete opfibration, hence that $\Pi$ is a discrete fibration.  And the content of Theorem \ref{disc fib set-functor duality thm} is that a functor $P\colon \F\to\C$ is a discrete fibration if, and only if, $P^{op}$ is isomorphic over $\C^{op}$ to the opposite of the category of elements of a canonically constructed functor $F_P\colon \C^{op}\to\mathbf{Set}$.  Going up a level and switching to category-valued functors, the category of elements for a 2-functor $F\colon \C^{op}\to\cat$ fits into not a 2-comma square, but instead a \emph{lax} comma square of the same form
$$\begin{tikzpicture}
\node(1){$\elt(F)^{op}$};
\node(2)[node distance=1in, right of=1]{$\C^{op}$};
\node(3)[node distance=.7in, below of=1]{$\mathbf 1$};
\node(4)[node distance=.7in, below of=2]{$\cat$};
\node(5)[node distance=.5in, right of=1]{$$};
\node(6)[node distance=.35in, below of=5]{$\Rightarrow$};
\draw[->](1) to node [above]{$\Pi^{op}$}(2);
\draw[->](1) to node [left]{$$}(3);
\draw[->](2) to node [right]{$F$}(4);
\draw[->](3) to node [below]{$\ast$}(4);
\end{tikzpicture}$$
with $\ast\colon \mathbf 1\to\cat$ denoting the inclusion of the one-object category.  An analogous interpretation of the fibration-functor duality result Theorem \ref{opfibration psdfunctor correspondence} can be made here, characterizing split fibrations in terms of projections in lax comma squares of this form.  

Note that if one were taking the former result as a guide for where to look for fibration properties of projections from various comma squares associated to functors valued in $\cat$, one would technically end up with choice of taking a 2-comma, iso-comma or lax-comma.  But of course the 2-comma is not categorically sound, since it would result in morphisms $f\colon (C,X) \to (D,Y)$ with $f^*X=Y$ holding strictly in $FC$, which is generally considered to be ``evil."  Thus, iso and lax would be appropriate cases to study.  However, Theorem \ref{opfibration psdfunctor correspondence} shows that lax is ultimately the fruitful choice.  And indeed the following development shows that the 2-category of elements for a 2-functor $F\colon\mathfrak B^{op}\to\cat$ on a 2-category $\mathfrak B$ turns out to be a generalization of this construction.

\begin{construction}[2-Category of Elements; Originally from \cite{BirdThesis}; general version in \S 2.2.1 \cite{Buckley}] \label{2cat of elts discrete case} For any 2-functor $F\colon \mathfrak B^{op}\to\cat$ on a 2-category $\mathfrak B$, the \textbf{2-category of elements} of $E$ is the 2-category whose
\begin{enumerate}
\item objects are pairs $(B,X)$ with $B\in \mathfrak B_0$ and $X\in FB$;
\item arrows are pairs $(f,u)\colon (B,X)\to (C,Y)$ with $f\colon B\to C$ in $\mathfrak B$ and $u\colon X\to f^*Y$ in the fiber $FB$;
\item and whose 2-cells $\colon (f,u)\Rightarrow (g,v)$ are those 2-cells $\alpha\colon f\Rightarrow g$ in $\mathfrak B$ making a commutative triangle
$$\begin{tikzpicture}
\node(1){$f^*Y$};
\node(2)[node distance=.4in, below of=1]{$$};
\node(3)[node distance=.4in, below of=2]{$g^*Y$};
\node(4)[node distance=1.2in, left of=2]{$X$};
\node(5)[node distance=.4in, left of=2]{$=$};
\draw[->](4) to node [above]{$u$}(1);
\draw[->](1) to node [right]{$\alpha^*_Y$}(3);
\draw[->](4) to node [below]{$v$}(3);
\end{tikzpicture}$$ 
of the category $FB$.
\end{enumerate}
Denote this 2-category by $\elt(E)$.  There is an evident projection 2-functor $\Pi\colon \elt(E)\to \mathfrak B$.
\end{construction}

\begin{prop} For any 2-functor $F\colon \mathfrak B^{op}\to\mathbf{Cat}$, the opposite of the 2-category of elements as above presents the lax comma object $\elt(F^{op}) \cong \ast/F$ with a universal lax natural transformation
$$\begin{tikzpicture}
\node(1){$\elt(F)^{op}$};
\node(2)[node distance=1in, right of=1]{$\mathfrak B^{op}$};
\node(3)[node distance=.7in, below of=1]{$\mathbf 1$};
\node(4)[node distance=.7in, below of=2]{$\mathbf{Cat}$};
\node(5)[node distance=.5in, right of=1]{$$};
\node(6)[node distance=.35in, below of=5]{$\Rightarrow$};
\draw[->](1) to node [above]{$\Pi^{op}$}(2);
\draw[->](1) to node [left]{$$}(3);
\draw[->](2) to node [right]{$F$}(4);
\draw[->](3) to node [below]{$\ast$}(4);
\end{tikzpicture}$$
where $\ast\colon  \mathbf 1\to\mathbf{Cat}$ denotes the map sending the unique element of $\mathbf 1$ to the terminal category.
\end{prop}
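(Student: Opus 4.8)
The plan is to prove the claim by instantiating Construction \ref{lax comma cat construction} with left leg $\ast\colon\mathbf 1\to\mathbf{Cat}$ and right leg $F\colon\mathfrak B^{op}\to\mathbf{Cat}$, unravelling the resulting lax comma category $\ast/F$, and matching it cell-by-cell with $\elt(F)^{op}$ from Construction \ref{2cat of elts discrete case}. First I would record that, since $\mathbf 1$ is terminal, a functor $\mathbf 1\to FC$ is precisely an object of $FC$; hence an object $(\ast,f,C)$ of $\ast/F$ is exactly a pair $(C,X)$ with $X\in FC$, which are the objects of $\elt(F)$ and thus of $\elt(F)^{op}$.

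Next I would unravel the morphisms. A morphism $(\ast,X,B)\to(\ast,Y,C)$ of $\ast/F$ is a triple $(h,k,\alpha)$ in which $h\colon\ast\to\ast$ is forced to be the identity, $k$ is an arrow $B\to C$ of $\mathfrak B^{op}$, equivalently an arrow $\bar k\colon C\to B$ of $\mathfrak B$, and $\alpha$ is the comma $2$-cell, which in $\mathbf{Cat}$ is a morphism in $FC$. With the direction convention of Construction \ref{lax comma cat construction}, namely $\alpha\colon g\circ\ast h\Rightarrow Fk\circ f$, this reads $\alpha\colon Y\to\bar k^{\,*}X$ in $FC$, where $\bar k^{\,*}=F\bar k$ in the notation for the contravariant action. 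On the other side, a morphism $(B,X)\to(C,Y)$ of $\elt(F)^{op}$ is by definition an arrow $(C,Y)\to(B,X)$ of $\elt(F)$, that is a pair $(g,v)$ with $g\colon C\to B$ in $\mathfrak B$ and $v\colon Y\to g^{*}X$ in $FC$. Taking $g=\bar k$ and $v=\alpha$ identifies the two descriptions on the nose.

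Then I would check the $2$-cells. A $2$-cell of $\ast/F$ is a pair $(\gamma,\delta)$ subject to the cylinder equation; since the domain of the left leg is $\mathbf 1$ the component $\gamma$ is trivial, so the datum is a single $\delta\colon k\Rightarrow n$ of $\mathfrak B^{op}$, which, as the chosen duality reverses $1$-cells but not $2$-cells, is a $2$-cell $\delta\colon\bar k\Rightarrow\bar n$ of $\mathfrak B$. The cylinder equality collapses, after deleting the trivial $\ast\gamma$, to the single equation $\beta=(F\delta)_X\circ\alpha$ in $FC$; this is exactly the commuting-triangle condition $\delta^{*}_X\circ\alpha=\beta$ defining a $2$-cell of $\elt(F)$, hence of $\elt(F)^{op}$. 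The crux of the whole argument is precisely this direction bookkeeping: one must confirm that the outer $(-)^{op}$ on the $2$-category of elements together with the $op$ in the base conspire so that the comma $2$-cell direction, the fibre-morphism direction $v\colon Y\to g^{*}X$, and the collapsed cylinder condition all point the same way. I expect this to be the only genuine obstacle; once the conventions are pinned down the assignments are mutually inverse and strictly functorial, yielding an isomorphism $\elt(F)^{op}\cong\ast/F$ of $2$-categories.

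Finally I would exhibit the universal lax natural transformation. By Example \ref{universal lax nat transf for cotensor} the lax comma $\ast/F$ carries the canonical $\lambda\colon\ast\circ\src\Rightarrow F\circ\tgt$ whose component at $(\ast,X,C)$ is the left leg $X\colon\mathbf 1\to FC$; transporting $\lambda$ across the isomorphism of the previous paragraph, with $\tgt$ becoming $\Pi^{op}$ and $\src$ the unique functor to $\mathbf 1$, produces exactly the lax natural transformation filling the displayed square, with component $X$ at each $(C,X)$. Its universality is then immediate from Lemma \ref{comma 1-dim universal lemma} and Proposition \ref{up of lax comma}, which characterise $\ast/F$ up to isomorphism in $\mathbf{Lax}$; thus $\elt(F)^{op}$ equipped with projection $\Pi^{op}$ presents the lax comma object as claimed.
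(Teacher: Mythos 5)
Your proposal is correct and matches the paper's approach: the paper's entire proof is the remark that the claim is a ``straightforward check from the constructions,'' and your argument is exactly that check carried out in full, instantiating Construction \ref{lax comma cat construction} at $\ast/F$, matching cells with $\elt(F)^{op}$ under the paper's convention that $(-)^{op}$ dualizes only 1-cells, and invoking Example \ref{universal lax nat transf for cotensor} together with Proposition \ref{up of lax comma} for the universal lax natural transformation. The direction bookkeeping you flag as the crux is handled correctly ($\alpha\colon Y\to \bar k^{\,*}X$ in $FC$ agreeing with $(g,v)$, and the collapsed cylinder equation agreeing with the triangle condition $\delta^{*}_X\circ\alpha=\beta$), so nothing is missing.
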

\begin{proof}  Straightforward check from the constructions.  \end{proof}

Since every discrete fibration appears, up to isomorphism, as the projection from a category of elements, we examine the fibration properties of the projection $\Pi\colon \elt(F)\to\mathfrak B$ to codify the 2-dimensional analogue. 

\begin{prop} \label{disc 2-fib lemma} Let $F\colon \mathfrak B^{op}\to\mathbf{Cat}$ denote a 2-functor.  The projection 2-functor $\Pi\colon\elt(F)\to \mathfrak B$ from the 2-category of elements (Construction \ref{2cat of elts}) has the following fibration properties. 
\begin{enumerate}
\item The ordinary functor $|\Pi|\colon |\elt(F)|\to |\mathfrak B|$ of underlying 1-categories is a split fibration.
\item Locally $\Pi$ is a discrete opfibration.
\end{enumerate} 
\end{prop}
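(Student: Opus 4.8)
The plan is to verify each property by directly inspecting the data of Construction \ref{2cat of elts discrete case}, the key leverage being that $F$ is a \emph{strict} 2-functor. First I would observe that the underlying 1-category $|\elt(F)|$ together with the 1-functor $|\Pi|$ is nothing but the classical category of elements and its projection for the underlying functor $|F|\colon |\mathfrak B|^{op}\to|\mathbf{Cat}|$, so that claim (1) is in principle immediate from Theorem \ref{opfibration psdfunctor correspondence}. To pin down the splitting explicitly and to isolate where strictness is used, I would exhibit the cleavage directly: given an arrow $h\colon B'\to B$ of $\mathfrak B$ and an object $(B,X)$ over $B$, take the chosen cartesian lift to be
$$\phi(h,(B,X)) := (h,\mathrm{id}_{h^*X})\colon (B',h^*X)\longrightarrow (B,X).$$
Verifying that this is cartesian reduces to the remark that any $(g,v)\colon (B'',W)\to(B',h^*X)$ composes with it to give $(hg,v)$, so that factoring a prescribed $(hg,w)$ through it forces $v=w$ under the identification $g^*h^*=(hg)^*$; this yields the required unique $|\Pi|$-lift over $g$.

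I would then point out that this cleavage is \emph{split}, not merely cloven, precisely because $F$ preserves composition on the nose: the composite of $\phi(g,(B',h^*X))$ with $\phi(h,(B,X))$ is $(hg,\mathrm{id})=\phi(hg,(B,X))$ with no intervening coherence isomorphism. This is the single place where strict (rather than pseudo) functoriality of $F$ is essential; were $F$ only a pseudofunctor one would obtain a merely cloven fibration, exactly in the spirit of Remark \ref{cleavage not functorial}.

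For claim (2), I would first unwind what ``locally a discrete opfibration'' demands, namely that for each pair of objects $(B,X),(C,Y)$ the hom-functor
$$\Pi_{(B,X),(C,Y)}\colon \elt(F)\bigl((B,X),(C,Y)\bigr)\longrightarrow \mathfrak B(B,C)$$
be a discrete opfibration of ordinary categories. An object of the source is a pair $(f,u)$ with $f\colon B\to C$ and $u\colon X\to f^*Y$, a morphism $(f,u)\to(g,v)$ is a 2-cell $\alpha\colon f\Rightarrow g$ satisfying the defining triangle $\alpha^*_Y u = v$, and the functor simply returns $f$ on objects and $\alpha$ on morphisms. Given such an object $(f,u)$ and a 2-cell $\alpha\colon f\Rightarrow g$ of $\mathfrak B(B,C)$ emanating from $f$, the only candidate lift is $\alpha$ itself regarded as a morphism $(f,u)\Rightarrow(g,\alpha^*_Y u)$: the target second component is \emph{forced} to be $v=\alpha^*_Y u$ by the triangle, and since $\Pi$ is the identity on 2-cells the lifting morphism is determined. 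Hence each $\alpha$ out of $(f,u)$ lifts uniquely, which is precisely the defining property of a discrete opfibration in the sense of Definition \ref{discr fibn defn}.

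The computations are routine once the data is unpacked, so I do not expect a genuine obstacle so much as two bookkeeping subtleties to keep in view. The first is variance: because $F$ is contravariant on $1$-cells but covariant on $2$-cells (the ``$op$'' convention discussed in the Introduction), the transition maps $\alpha^*_Y\colon f^*Y\to g^*Y$ point in the same direction as $\alpha$, and it is exactly this that makes the local behaviour an \emph{op}fibration rather than a fibration. The second, flagged above, is that the global splitting in (1) rests on strict functoriality, so it is worth stating explicitly in the write-up that this is where the hypothesis that $F$ is a $2$-functor (and not a pseudofunctor) is consumed.
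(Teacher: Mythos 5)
Your proposal is correct and takes essentially the same route as the paper: claim (1) is reduced to the classical $1$-categorical category of elements (the paper treats this as already established, while you additionally spell out the split cleavage $(h,\mathrm{id}_{h^*X})$ and where strict $2$-functoriality is consumed), and for claim (2) you exhibit exactly the paper's lift $\alpha\colon (f,u)\Rightarrow (g,\alpha^*_Y u)$, your forced-target uniqueness argument being the same fact the paper phrases as the values of $F$ being ordinary categories, hence locally discrete. No gaps.
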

\begin{proof}  Since at the level of 1-categories, the 2-category of elements is the same as the ordinary 1-category of elements, the first point has been established.  For the discrete opfibration claim, start with a morphism $(f,u)\colon (C,X) \to (D,Y)$ and a cell $\alpha\colon f\Rightarrow g\colon C\rightrightarrows D$ from the image of $(f,u)$ under $\Pi$ in $\mathfrak B$.  The required lift is the cell
$$\begin{tikzpicture}
\node(1){$(C,X)$};
\node(2)[node distance=.8in, right of=1]{$\Downarrow (\alpha,1)$};
\node(3)[node distance=.8in, right of=2]{$(D,Y)$};
\draw[->,bend left](1) to node [above]{$(f,u)$}(3);
\draw[->,bend right,dashed](1) to node [below]{$(g,\alpha^*_Y\circ u)$}(3);
\end{tikzpicture}$$
This evidently is over $\alpha$ via the projection $\Pi$.  And it is the unique such morphism since the values of $F$ are ordinary categories, hence discrete as 2-categories.  \end{proof}

\begin{remark}  The 2-category of elements construction for a 2-functor $E\colon \mathfrak B\to\mathfrak{Cat}$ will be a split opfibration at the level of underlying 1-categories and a discrete fibration locally.
\end{remark}

\begin{define} \label{disc 2-fibration defn} A \textbf{split discrete 2-fibration} is a 2-functor $E\colon \mathfrak E\to\mathfrak C$ such that  
\begin{enumerate}
\item the underlying functor $|E|\colon |\mathfrak E|\to|\mathfrak B|$ is a split fibration;
\item $E$ itself is locally a discrete fibration, in that each functor $E\colon \mathfrak E(X,Y)\to \mathfrak C(EX,EY)$ is a discrete fibration.
\end{enumerate}
Dually, a discrete 2-opfibration is a 2-functor $E\colon \mathfrak E\to\mathfrak B$ whose underlying functor of 1-categories is a split opfibration and which is locally a discrete opfibration.
\end{define}

\subsubsection{Examples}

\begin{example}  Any split fibration $F\colon \F\to\C$ is a split discrete 2-fibration.  Dually, any split opfibration is a split discrete 2-opfibration.
\end{example}

\begin{example} \label{domain projection example} The domain projection $d_0\colon \mathfrak C/C\to\mathfrak C$ from the lax slice of a 2-category $\mathfrak C/C$ (from Example \ref{lax slice example}) back to $\mathfrak C$ is a discrete 2-fibration.  Notice that each $\mathfrak C/C$ is the fiber of the 2-functor $\tgt\colon \sq(\mathfrak C)\to \mathfrak C$.  This means, of course, that $d_0$ is a fragment of the source 2-functor.  Discrete 2-fibrations isomorphic to those of the form $d_0$ are said to be \textbf{representable}.
\end{example}

\begin{example} \label{codomain example}  The 2-functor 
\[  \mathrm{cod}\colon \mathbf{DOpf} \to\mathbf{Cat}
\]
sending a discrete opfibration (with small fibers) to its codomain and extended suitably to morphisms and 2-cells is a discrete 2-fibration.
\end{example}

\begin{example} \label{algebra for monad in K discrete example} Let $\K$ denote a 2-category and $t\colon B\to B$ a 2-monad in $\K$.  Let $t\mathbf{Alg}$ denote the 2-category of $t$-algebras as in Example \ref{algebra for monad in 2cat eg}.  The forgetful 2-functor
\[ \Pi\colon t\mathbf{Alg}\to\K
\]
is a discrete 2-fibration in the sense of Definition \ref{disc 2-fibration defn}.  This is because, as observed in the reference, pulling back by a morphism or by a 2-cell preserves $t$-algebra structure, but changes the domain or the target.
\end{example}

\begin{example}[Cf. ``Families" of \cite{Buckley} as Example \ref{family 2fibn} below] \label{small fam example} Let $\mathfrak B$ denote a (small) 2-category.  Note that 2-functors $\C\to\mathfrak B$ from a 1-category $\C$ amount to 1-functors $\C\to |\mathfrak B|$ since $\C$ has no nontrivial 2-cells.  Take $\mathbf{fam}(\mathfrak B)$ to denote the 2-category whose objects are pairs $(\C,F)$ where $F\colon \C\to\mathfrak B$ is a functor.  The morphisms are of the same form as those in $\mathbf{Fam}(\mathfrak B)$ from Example \ref{family 2fibn} below.  The 2-cells are also essentially the same, but with the difference that the modification in the definition must be an identity.  The projection
\[ \Pi\colon \mathbf{fam}(\mathfrak B) \to \mathbf{Cat}
\]
is then a discrete 2-fibration as in Definition \ref{disc 2-fibration defn}.  Note that there is an inclusion $\mathbf{fam}(\mathfrak B)\to\mathbf{Fam}(\mathfrak B)$ commuting with the projections to $\mathfrak B$.  Since $\Pi\colon \mathbf{fam}(\mathfrak B)\to\mathbf{Cat}$ is \emph{a fortiori} a 2-fibration, this might be seen as another main example of an inclusion of a sub-2-fibration insofar as such a concept is of interest.
\end{example}

\subsection{The Representation Theorem}

Here is presented a direct proof of the first main result of the paper, namely, that discrete 2-fibrations correspond via the category of elements construction to category-valued 2-functors indexed by the base 2-category.  The candidate for the pseudo-inverse is developed first and subsequently shown to be the correct construction via lax pullback squares. 

\subsubsection{The Pseudo-Inverse}

The following development presents an adaptation of \S2.2.3 in \cite{Buckley} suitable to the 2-cell conventions discussed in the Introduction.  Throughout let $P\colon \mathfrak E\to\mathfrak B$ denote a discrete 2-fibration.  Recall that $\mathfrak E_B$ denote the fiber of $P$ above $B\in\mathfrak B_0$ consisting of the objects, arrows and 2-cells of $\mathfrak E$ above $B$ and the various identities associated to it.

\begin{lemma}  Each fiber $\mathfrak E_B$ is an ordinary category.
\end{lemma}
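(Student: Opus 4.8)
The plan is to show directly that the only 2-cells living in $\mathfrak E_B$ are identities, so that $\mathfrak E_B$ is locally discrete as a 2-category and hence an ordinary category. The single ingredient I expect to use is the \emph{local} discreteness in clause (2) of Definition \ref{disc 2-fibration defn}; the global split-fibration condition plays no role here.

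First I would unwind what a 2-cell of $\mathfrak E_B$ is. By the description of the fiber, such a 2-cell is some $\alpha\colon f\Rightarrow g$ in $\mathfrak E$ between vertical arrows $f,g\colon X\to Y$ (so $PX=PY=B$ and $Pf=Pg=1_B$) which is itself vertical, meaning $P\alpha = 1_{1_B}$. The key move is to reread $\alpha$ as a morphism of the hom-category $\mathfrak E(X,Y)$ lying over the identity 2-cell $1_{1_B}$ of $\mathfrak B(B,B)$ under the functor $P\colon \mathfrak E(X,Y)\to\mathfrak B(PX,PY)$, which by hypothesis is a discrete fibration.

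Next I would exhibit a second lift of the same base 2-cell with the same codomain. The identity $1_g\colon g\Rightarrow g$ is a morphism of $\mathfrak E(X,Y)$ with $P(1_g)=1_{Pg}=1_{1_B}$, so both $\alpha\colon f\Rightarrow g$ and $1_g\colon g\Rightarrow g$ are lifts of the base morphism $1_{1_B}$ terminating at the object $g$. The uniqueness clause in the definition of a discrete fibration (Definition \ref{discr fibn defn}) then forces $\alpha = 1_g$; in particular $f=g$ and $\alpha$ is an identity 2-cell. Since $\alpha$ was arbitrary, every 2-cell of $\mathfrak E_B$ is an identity, which is exactly the assertion that $\mathfrak E_B$ is an ordinary category.

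The argument is short and I do not anticipate a genuine obstacle; the only point requiring care is the bookkeeping, namely checking that $\alpha$ and $1_g$ really are comparable lifts---that they share the codomain $g$ and lie over the \emph{same} base 2-cell $1_{1_B}$---so that the discrete-fibration uniqueness applies verbatim.
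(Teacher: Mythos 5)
Your proof is correct and is essentially the paper's own argument: both observe that a 2-cell $\alpha\colon f\Rightarrow g$ in the fiber lies over $1_{1_B}$ and then invoke the uniqueness clause of the local discreteness hypothesis to force $\alpha$ to be an identity. The only (immaterial) difference is that the paper anchors the comparison at the \emph{domain}, concluding $\alpha=1_f$ from $P$ being locally a discrete \emph{op}fibration (its working convention throughout), whereas you anchor at the codomain and compare $\alpha$ with $1_g$, matching the literal ``discrete fibration'' wording of clause (2) of Definition \ref{disc 2-fibration defn} --- a straightforward dualization of the same one-line argument.
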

\begin{proof}  If $\theta\colon u\Rightarrow v$ is a 2-cell between arrows $u,v\colon X\rightrightarrows Y$ of $\mathfrak E_B$, then $P\theta = 1_{1_B}$ holds by definition so that since $P$ is locally a discrete opfibration $\theta$ must be $1_{u}$. \end{proof}

\begin{construction}[Pseudo-Inverse]  Define correspondences $F_P\colon \mathfrak B^{op}\to\mathbf{Cat}$ amounting to a 2-functor in the following way.  First take on objects
\[ F_PB:=\mathfrak E_B
\]
namely, the fiber category of $P$ over $B\in\mathfrak B_0$.  A morphism $f\colon B\to C$ of $\mathfrak B$ defines a ``transition functor" $f^*\colon \mathfrak E_C\to\mathfrak E_B$ in the following way.  First $f^*X$ for $X\in EC_0$ is the domain of the chosen cartesian morphism over $f$, namely, the arrow of $\mathfrak E$
\[ \phi(f,X)\colon f^*X\to X
\]
specified by the splitting $\phi$.  The arrow assignment and 2-cell assignments for $f^*$ are given by the 1- and 2-dimensional lifting properties of the chosen cartesian arrows coming with the splitting.  Such $f^*$ is a functor by uniqueness of lifts.  Finally, given $\alpha\colon f\Rightarrow g$ in $\mathfrak B$, there is an associated transition 2-cell $\alpha^*\colon f^*\Rightarrow g^*$ given in the following way.  The component on $X\in EC_0$ is the dashed arrow in the diagram
$$\begin{tikzpicture}
\node(1){$f^*X$};
\node(2)[node distance=1.4in, right of=1]{$X$};
\node(3)[node distance=1in, below of=1]{$g^*X$};
\node(4)[node distance=1in, below of=2]{$X.$};
\node(5)[node distance=.7in, right of=1]{$$};
\node(6)[node distance=.3in, below of=5]{$\Downarrow\tilde\alpha$};
\draw[->](1) to node [above]{$\phi(f,X)$}(2);
\draw[->, dashed](1) to node [left]{$(\alpha^*)_X$}(3);
\draw[->](2) to node [right]{$1$}(4);
\draw[->](3) to node [below]{$\phi(g,X)$}(4);
\draw[->,bend right=80](1) to node [below]{$\alpha_!\phi(f,X)$}(2);
\end{tikzpicture}$$
The 2-cell $\tilde\alpha$ is the chosen 2-cell $\phi(\alpha, \phi(f,X))$ coming with the local opsplitting; since its target is over $g$, the dashed arrow making a commutative triangle exists by the 1-cell lifting property of $\phi(g,X)$.   Notice that this means the equation
\begin{equation}
\label{eqn defn target lifted 2-cell} \phi(g,X)\alpha^*_X=\alpha_!\phi(f,X)
\end{equation}  
holds.  This choice of component over $X\in EC_0$ defines a natural transformation, as required, by preservation properties of the splitting and uniqueness of lifts.  This is the hard part in the constructions of \cite{Buckley}.  However, it is made straightforward here by the uniqueness properties of the discrete opfibrations.
\end{construction}

\begin{lemma} \label{2-naturality proof} Each 2-cell $\alpha\colon f\Rightarrow g$ of $\mathfrak B$ induces a 2-natural transition 2-cell $\alpha^*\colon f^*\Rightarrow g^*$ between the transition 2-functors $f^*, g^* \colon \mathfrak E_C\to\mathfrak E_B$.
\end{lemma}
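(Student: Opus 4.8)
The plan is to reduce the asserted 2-naturality to ordinary naturality and then to verify the single naturality square by projecting into $\mathfrak B$ and invoking the uniqueness clauses of the cartesian and opcartesian lifts. First I would observe that since both fibers $\mathfrak E_C$ and $\mathfrak E_B$ are ordinary categories by the preceding lemma, the transition functors $f^*, g^* \colon \mathfrak E_C \to \mathfrak E_B$ carry no nontrivial 2-cell data, so a 2-natural transformation between them is nothing more than an ordinary natural transformation: the compatibility with 2-cells of the source is vacuous. Hence the only thing to check is that for every morphism $u\colon X\to Y$ of $\mathfrak E_C$ the square
\[ g^*u\,\alpha^*_X = \alpha^*_Y\, f^*u \]
commutes, both composites being arrows $f^*X\to g^*Y$ lying over $1_B$.

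Next I would use cartesianness of $\phi(g,Y)\colon g^*Y\to Y$ to cut this down to an equality that can be read off after postcomposition. Since both sides lie over $1_B$ and $\phi(g,Y)$ is cartesian over $g=g\circ 1_B$, the uniqueness clause of the cartesian lifting property shows it suffices to prove $\phi(g,Y)\,g^*u\,\alpha^*_X = \phi(g,Y)\,\alpha^*_Y\,f^*u$. Unwinding the left side via the defining equation $\phi(g,Y)\,g^*u = u\,\phi(g,X)$ of the transition functor together with equation (\ref{eqn defn target lifted 2-cell}) yields $u\,\alpha_!\phi(f,X)$, while unwinding the right side with (\ref{eqn defn target lifted 2-cell}) at $Y$ yields $\alpha_!\phi(f,Y)\,f^*u$. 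Thus everything reduces to the identity
\[ u\,\alpha_!\phi(f,X) = \alpha_!\phi(f,Y)\, f^*u \]
of arrows $f^*X\to Y$ over $g$.

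The conceptual heart of the argument, and the place where discreteness does the real work, is to recognize both sides of this last identity as the \emph{targets} of two whiskered opcartesian 2-cells. On the left I would whisker the chosen cell $\tilde\alpha_X = \phi(\alpha,\phi(f,X))\colon \phi(f,X)\Rightarrow \alpha_!\phi(f,X)$ by $u$ to form $u\ast\tilde\alpha_X$; on the right I would whisker $\tilde\alpha_Y = \phi(\alpha,\phi(f,Y))\colon \phi(f,Y)\Rightarrow \alpha_!\phi(f,Y)$ by $f^*u$ to form $\tilde\alpha_Y\ast f^*u$. Both of these 2-cells live in $\mathfrak E(f^*X,Y)$, both have common source $u\,\phi(f,X) = \phi(f,Y)\,f^*u$ (using the defining equation of $f^*u$), and both project under $P$ to $\alpha$, since $u$ and $f^*u$ are $P$-vertical. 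Because $P$ is locally a discrete opfibration, the lift of $\alpha$ out of a prescribed source object in $\mathfrak E(f^*X,Y)$ is unique, so $u\ast\tilde\alpha_X = \tilde\alpha_Y\ast f^*u$; comparing targets gives the required identity and hence naturality. I expect the main obstacle to be precisely this reformulation: correctly tracking that the transition \emph{functors} are built from the cartesian $1$-cell lifts whereas the transition $2$-cells and this naturality check are governed by the opcartesian \emph{local} lifts, and confirming that the two whiskered cells genuinely share a source so that local uniqueness applies.
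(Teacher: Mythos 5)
Your proposal is correct and takes essentially the same route as the paper's proof: the paper likewise compares the whiskered cells $u\ast\phi(\alpha,\phi(f,X))$ and $\phi(\alpha,\phi(f,Y))\ast f^*u$, which are both lifts of $\alpha$ with common source $u\,\phi(f,X)=\phi(f,Y)\,f^*u$, invokes the local discrete opfibration uniqueness to equate them and hence their targets, and then uses cartesianness of $\phi(g,Y)$ to conclude $\alpha^*_Y\,f^*u = g^*u\,\alpha^*_X$. Your explicit reduction of 2-naturality to ordinary naturality and the postcomposition bookkeeping via equation (\ref{eqn defn target lifted 2-cell}) simply make precise steps the paper leaves implicit.
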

\begin{proof}  Ordinary naturality follows by an equality of 2-cells as in the diagram
$$\begin{tikzpicture}
\node(1){$f^*X$};
\node(2)[node distance=1in, right of=1]{$X$};
\node(3)[node distance=.8in, below of=1]{$f^*Y$};
\node(4)[node distance=.8in, below of=2]{$Y$};
\node(5)[node distance=.8in, below of=3]{$g^*Y$};
\node(6)[node distance=.8in, below of=4]{$Y$};
\node(7)[node distance=.5in, right of=1]{$$};
\node(8)[node distance=1.2in, below of=7]{$\Downarrow\tau$};
\node(9)[node distance=2in, right of=2]{$f^*X$};
\node(10)[node distance=1in, right of=9]{$X$};
\node(11)[node distance=.8in, below of=9]{$g^*X$};
\node(12)[node distance=.8in, below of=10]{$X$};
\node(13)[node distance=.8in, below of=11]{$g^*Y$};
\node(14)[node distance=.8in, below of=12]{$Y$};
\node(15)[node distance=.5in, right of=9]{$$};
\node(16)[node distance=.4in, below of=15]{$\Downarrow \eta$};
\node(17)[node distance=1in, right of=4]{$=$};
\draw[->](1) to node [above]{$\phi(f,X)$}(2);
\draw[->](1) to node [left]{$f^*u$}(3);
\draw[->](2) to node [right]{$u$}(4);
\draw[->](3) to node [above]{$\phi(f,Y)$}(4);
\draw[->](3) to node [left]{$\alpha^*_Y$}(5);
\draw[->](4) to node [right]{$1$}(6);
\draw[->](5) to node [below]{$\phi(g,Y)$}(6);
\draw[->](9) to node [above]{$\phi(f,X)$}(10);
\draw[->](9) to node [left]{$\alpha^*_X$}(11);
\draw[->](10) to node [right]{$1$}(12);
\draw[->](11) to node [below]{$\phi(g,X)$}(12);
\draw[->](11) to node [left]{$g^*u$}(13);
\draw[->](12) to node [right]{$u$}(14);
\draw[->](13) to node [below]{$\phi(g,Y)$}(14);
\end{tikzpicture}$$
where $\tau=\phi(\alpha,\phi(f,Y))$ and $\eta = \phi(\alpha,\phi(f,X))$.  That the diagrams are equal follows from the fact that $\tau\ast f^*u$ and $u\ast\eta$ are both lifts of $\alpha$ with domain $u\phi(f,X)$.  But this means that the targets must be equal.  Since $\phi(g,Y)$ is cartesian, the equation $\alpha^*_Yf^*u= g^*u\alpha^*_X$ must hold, as required.  \end{proof}

\begin{prop}  The assignments for $F_P\colon \mathfrak B^{op}\to\mathbf{Cat}$ make it a 2-functor.
\end{prop}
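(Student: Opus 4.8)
The plan is to verify directly that $F_P$ satisfies the axioms of a $\mathbf{Cat}$-enriched functor $\mathfrak B^{op}\to\mathbf{Cat}$, exploiting throughout the fact that every arrow- and $2$-cell-lift appearing in the construction is \emph{unique}: the cartesian arrows $\phi(f,X)$ are unique by the split fibration structure of $|P|$, and the $2$-cell lifts are unique because $P$ is locally a discrete opfibration. The single workhorse is the defining equation~\eqref{eqn defn target lifted 2-cell}, namely $\phi(g,X)\,\alpha^*_X=\alpha_!\phi(f,X)$, which---because $\phi(g,X)$ is cartesian---pins down each component $\alpha^*_X$ \emph{uniquely} as the factorization of the opcartesian lift of $\alpha$ through $\phi(g,X)$. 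Hence to prove any equality of transition data it suffices to show that both sides satisfy the same such defining equation and then appeal to uniqueness. (That each $\alpha^*$ is a legitimate morphism of $\mathbf{Cat}(F_PB,F_PC)$, i.e.\ genuinely natural, is already Lemma~\ref{2-naturality proof}.)

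First I would dispatch the $1$-dimensional axioms. Preservation of identity $1$-cells, $(1_B)^*=1_{\mathfrak E_B}$, and of composites, $(gf)^*=f^*g^*$, is exactly the statement that the chosen cartesian lifts compose functorially, which is what it means for $|P|$ to be a \emph{split} fibration (condition (1) of Definition~\ref{disc 2-fibration defn}); this is the content already packaged in Theorem~\ref{opfibration psdfunctor correspondence} at the level of underlying $1$-categories, so nothing new is required here. Next come the hom-functoriality axioms, that each assignment $\alpha\mapsto\alpha^*$ is a functor $\mathfrak B^{op}(B,C)\to\mathbf{Cat}(F_PB,F_PC)$. For identities, $(1_f)^*_X$ satisfies $\phi(f,X)\,(1_f)^*_X=(1_f)_!\phi(f,X)=\phi(f,X)$, since the opcartesian lift of an identity $2$-cell is an identity (the opcleavage being a normalized splitting); uniqueness then forces $(1_f)^*_X=1_{f^*X}$, whence $(1_f)^*=1_{f^*}$.

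For vertical composition, given $\alpha\colon f\Rightarrow g$ and $\beta\colon g\Rightarrow h$, I would show that $\beta^*_X\alpha^*_X$ obeys the defining equation of $(\beta\alpha)^*_X$. Whiskering the opcartesian lift of $\beta$ at $\phi(g,X)$ by the vertical arrow $\alpha^*_X$ produces, by uniqueness, the opcartesian lift of $\beta$ at $\alpha_!\phi(f,X)=\phi(g,X)\alpha^*_X$; stacking this on top of the lift of $\alpha$ at $\phi(f,X)$ is, again by uniqueness, the lift of $\beta\alpha$ at $\phi(f,X)$. Reading off targets yields $\phi(h,X)\,\beta^*_X\alpha^*_X=(\beta\alpha)_!\phi(f,X)$, and uniqueness gives $(\beta\alpha)^*=\beta^*\alpha^*$.

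The remaining and hardest axiom is compatibility with horizontal composition---the enriched-functoriality square comparing composition in $\mathfrak B^{op}$ with Godement composition in $\mathbf{Cat}$. Using interchange in $\mathfrak B$ to factor any horizontal composite $\beta\ast\alpha$ into whiskerings followed by vertical composites, and having already secured vertical composition, I would reduce this to the two whiskering identities $(g\ast\alpha)^*=\alpha^*\ast 1_{g^*}$ and $(\beta\ast f)^*=1_{f^*}\ast\beta^*$. Each is checked component-wise by the same strategy: form the relevant whiskered $2$-cell of $\mathfrak B$, lift it uniquely, and verify that the asserted composite satisfies equation~\eqref{eqn defn target lifted 2-cell}. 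I expect this step to be the main obstacle, since it is the only place where the cartesian $1$-cell splitting and the opcartesian $2$-cell opcleavage must be manipulated \emph{simultaneously} and their interaction invoked; the saving grace is precisely the discreteness hypothesis, which makes all the lifts unique and thereby collapses what would otherwise be a thicket of coherence conditions into a sequence of routine uniqueness arguments.
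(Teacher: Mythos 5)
Your proposal is correct and takes essentially the same route as the paper's own proof: both arguments rest on the defining equation $\phi(g,X)\,\alpha^*_X=\alpha_!\phi(f,X)$, on unique factorization through the cartesian arrows $\phi(h,X)$, and on the uniqueness of local $2$-cell lifts supplied by the discrete-opfibration hypothesis, with $1$-cell functoriality delegated to the split-fibration structure of $|P|$ (the paper cites the corresponding argument of Buckley, \S 2.4.4). The only divergence is organizational: where you reduce horizontal composition via interchange to two whiskering identities, the paper checks $\beta^*\ast\alpha^*=(\beta\ast\alpha)^*$ directly on components using the horizontal splitting equation $\phi(\beta\ast\alpha,gf)=\phi(\beta,g)\ast\phi(\alpha,f)$ together with the same uniqueness-of-lifts argument, so the two verifications carry identical content.
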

\begin{proof}  The assignments are well-defined by inspection and Lemma \ref{2-naturality proof} above.  That $F_P$ is functorial on 1-cells follows by the same argument in \S 2.4.4 on the bottom of p. 1042 of \cite{Buckley}.  Vertical composition of 2-cells $\alpha\colon f\Rightarrow g$ and $\beta\colon g\Rightarrow h$ between 1-cells $B\to C$ follows from a purported equality of 2-cells as in the diagram
$$\begin{tikzpicture}
\node(1){$f^*X$};
\node(2)[node distance=1in, right of=1]{$X$};
\node(3)[node distance=.8in, below of=1]{$g^*X$};
\node(4)[node distance=.8in, below of=2]{$Y$};
\node(5)[node distance=.8in, below of=3]{$h^*X$};
\node(6)[node distance=.8in, below of=4]{$X$};
\node(7)[node distance=.5in, right of=1]{$$};
\node(8)[node distance=.4in, below of=7]{$\Downarrow\eta$};
\node(9)[node distance=2.5in, right of=8]{$f^*X$};
\node(10)[node distance=1in, right of=9]{$X$};
\node(11)[node distance=.8in, below of=9]{$h^*X$};
\node(12)[node distance=.8in, below of=10]{$X$};
\node(13)[node distance=.8in, below of=8]{$\Downarrow\tau$};
%\node(14)[node distance=.8in, below of=12]{$Y$};
\node(15)[node distance=.5in, right of=9]{$$};
\node(16)[node distance=.4in, below of=15]{$\Downarrow\rho$};
\node(17)[node distance=1in, right of=4]{$=$};
\draw[->](1) to node [above]{$\phi(f,X)$}(2);
\draw[->](1) to node [left]{$\alpha^*_X$}(3);
\draw[->](2) to node [right]{$1$}(4);
\draw[->](3) to node [above]{$\phi(g,X)$}(4);
\draw[->](3) to node [left]{$\beta^*_X$}(5);
\draw[->](4) to node [right]{$1$}(6);
\draw[->](5) to node [below]{$\phi(h,X)$}(6);
\draw[->](9) to node [above]{$\phi(f,X)$}(10);
\draw[->](9) to node [left]{$(\beta\alpha)^*_X$}(11);
\draw[->](10) to node [right]{$1$}(12);
\draw[->](11) to node [below]{$\phi(h,X)$}(12);
\end{tikzpicture}$$
since $\phi(h,X)$ is cartesian.  But the 2-cells on either side of the equality are in fact equal again by the fact that $P$ is locally a discrete opfibration and the cells on either side are over the vertical composite $\beta\alpha$ with the same source $\phi(f,X)$.

Horizontal composition of 2-cells is also respected.  Given $\alpha\colon  f\Rightarrow g$ and $\beta\colon h\Rightarrow k$ with $f,g\colon A\rightrightarrows B$ and $h,k\colon B\rightrightarrows C$, it needs to be seen that $\beta^*\ast\alpha^*=(\beta\ast\alpha)^*$ holds.  Checking on a component at $X\in \mathfrak E_C$, this follows from an equality of 2-cells as in
$$\begin{tikzpicture}
\node(1){$f^*h^*X$};
\node(2)[node distance=1.2in, right of=1]{$h^*X$};
\node(3)[node distance=.8in, below of=1]{$g^*h^*X$};
\node(4)[node distance=.8in, below of=2]{$h^*X$};
\node(5)[node distance=.8in, below of=3]{$g^*k^*X$};
\node(6)[node distance=.8in, below of=4]{$k^*X$};
\node(7)[node distance=.6in, right of=1]{$$};
\node(8)[node distance=.4in, below of=7]{$\Downarrow\eta$};
\node(9)[node distance=3.5in, right of=8]{$f^*X$};
\node(10)[node distance=1in, right of=9]{$X$};
\node(11)[node distance=.8in, below of=9]{$h^*X$};
\node(12)[node distance=.8in, below of=10]{$X$};
\node(13)[node distance=.8in, below of=8]{$$};
\node(14)[node distance=1.1in, right of=13]{$\Downarrow\tau$};
\node(15)[node distance=.5in, right of=9]{$$};
\node(16)[node distance=.4in, below of=15]{$\Downarrow\rho$};
\node(17)[node distance=1.8in, right of=4]{$=$};
\node(18)[node distance=1in, right of=4]{$X$};
\node(19)[node distance=.8in, below of=18]{$X$};
\draw[->](1) to node [above]{$\phi(f,h^*X)$}(2);
\draw[->](1) to node [left]{$\alpha^*_{h^*X}$}(3);
\draw[->](2) to node [right]{$1$}(4);
\draw[->](3) to node [above]{$\phi(g,h^*X)$}(4);
\draw[->](3) to node [left]{$g^*\beta^*_X$}(5);
\draw[->](4) to node [right]{$\beta^*_X$}(6);
\draw[->](5) to node [below]{$\phi(h,k^*X)$}(6);
\draw[->](9) to node [above]{$\phi(hf,X)$}(10);
\draw[->](9) to node [left]{$(\beta\ast\alpha)^*_X$}(11);
\draw[->](10) to node [right]{$1$}(12);
\draw[->](11) to node [below]{$\phi(kg,X)$}(12);
\draw[->](4) to node [above]{$\phi(h,X)$}(18);
\draw[->](18) to node [right]{$1$}(19);
\draw[->](6) to node [below]{$\phi(k,X)$}(19);
\end{tikzpicture}$$
by the splitting equation $\phi(k,X)\phi(\alpha,\phi(f,h^*X) = \phi(kg,X)$.  But again the composites on each side are over $\beta\ast \alpha$ with the same source by the splitting equations.  The rest of the proof follows the pattern of straightforward adaptations of the argument of \S 2.4.4 on pp. 1042-43 in \cite{Buckley} using the various splitting assumptions.  
\end{proof}

\subsubsection{Main Theorem}

The pseudo-inverse from the previous development is what is required to prove the main theorem, namely, that discrete 2-fibrations over a base $\mathfrak B$ correspond to contravariant category-valued 2-functors on the base.  That the correspondence $\elt(-)$ is essentially surjective up to isomorphism can first be seen using lax comma squares.

\begin{construction} \label{lax comma square construction} Let $P\colon \mathfrak E\to\mathfrak B$ denote a discrete 2-fibration.  Construct a lax natural transformation
$$\begin{tikzpicture}
\node(1){$\mathfrak E^{op}$};
\node(2)[node distance=1in, right of=1]{$\mathfrak B^{op}$};
\node(3)[node distance=.8in, below of=1]{$\mathbf 1$};
\node(4)[node distance=.8in, below of=2]{$\cat$};
\node(5)[node distance=.5in, right of=1]{$$};
\node(6)[node distance=.33in, below of=5]{$\lambda$};
\node(7)[node distance=.46in, below of=5]{$\Rightarrow$};
\draw[->](1) to node [above]{$P^{op}$}(2);
\draw[->](1) to node [left]{$$}(3);
\draw[->](2) to node [right]{$F_P$}(4);
\draw[->](3) to node [below]{$\ast$}(4);
\end{tikzpicture}$$
in the following way.  The component corresponding to $X\in\mathfrak E_0$ is given by $X\colon \mathbf 1\to \mathfrak E_{PX}$.  Given $u\colon X\to Y$ of $\mathfrak E$, the corresponding lax naturality square is in fact a triangle as a left in the display
$$\begin{tikzpicture}
\node(1){$\mathfrak E_{PX}$};
\node(2)[node distance=.4in, below of=1]{$$};
\node(3)[node distance=.4in, below of=2]{$\mathfrak E_{PY}$};
\node(4)[node distance=1.2in, left of=2]{$\mathbf 1$};
\node(5)[node distance=.4in, left of=2]{$\Downarrow \tilde u$};
\node(6)[node distance=1.8in, right of=3]{$P(u)^*Y$};
\node(7)[node distance=1.4in, right of=6]{$Y$};
\node(8)[node distance=.8in, above of=6]{$X$};
\draw[->](4) to node [above]{$X$}(1);
\draw[->](3) to node [right]{$P(u)^*$}(1);
\draw[->](4) to node [below]{$Y$}(3);
\draw[->](6) to node [below]{$\phi(Pu,Y)$}(7);
\draw[->,dashed](8) to node [left]{$\tilde u$}(6);
\draw[->](8) to node [above]{$u$}(7);
\end{tikzpicture}$$ 
where $\tilde u$ is the unique lift of $1$ in $\mathfrak E_{PX}$ appearing on the right.  That such data does constitute a lax natural transformation follows by the construction of $F_P$ and by the assumed splitting equations for $P$.
\end{construction}

\begin{prop}  For any discrete 2-fibration $P\colon \mathfrak E\to\mathfrak B$, the 2-functor $F_P\colon \mathfrak B^{op}\to\cat$ fits into a lax comma square of the form
$$\begin{tikzpicture}
\node(1){$\mathfrak E^{op}$};
\node(2)[node distance=1in, right of=1]{$\mathfrak B^{op}$};
\node(3)[node distance=.8in, below of=1]{$\mathbf 1$};
\node(4)[node distance=.8in, below of=2]{$\cat$};
\node(5)[node distance=.5in, right of=1]{$$};
\node(6)[node distance=.33in, below of=5]{$\lambda$};
\node(7)[node distance=.46in, below of=5]{$\Rightarrow$};
\draw[->](1) to node [above]{$P^{op}$}(2);
\draw[->](1) to node [left]{$$}(3);
\draw[->](2) to node [right]{$F_P$}(4);
\draw[->](3) to node [below]{$\ast$}(4);
\end{tikzpicture}$$
with $\lambda$ as in Construction \ref{lax comma square construction} above.
\end{prop}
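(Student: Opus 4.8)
The plan is to reduce the statement to the one-dimensional universal property already in hand and then check that the resulting comparison 2-functor is an isomorphism cell by cell. By the identification of $\elt(F_P)^{op}$ with the lax comma object $\ast/F_P$ established just above, this construction carries a universal lax natural transformation, call it $\beta$, and Lemma \ref{comma 1-dim universal lemma} asserts that $\beta$ is $1$-dimensionally universal amid lax natural transformations. Now $\lambda$ of Construction \ref{lax comma square construction} is exactly a lax natural transformation of the form to which that lemma applies, with $F=\ast$, $G=F_P$, $H$ the unique $2$-functor $\mathfrak E^{op}\to\mathbf 1$, and $K=P^{op}$. Hence the lemma produces a unique $2$-functor $U\colon \mathfrak E^{op}\to\ast/F_P\cong\elt(F_P)^{op}$, commuting with the projections and satisfying $\beta\ast U=\lambda$. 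It therefore suffices to prove that $U$, equivalently its opposite $U^{op}\colon \mathfrak E\to\elt(F_P)$, is an isomorphism of $2$-categories; the compatibility with $\lambda$ and the projections is then automatic, and the displayed square is a lax comma square.

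Next I would unwind $U^{op}$ explicitly and exhibit a candidate inverse. On objects $U^{op}$ sends $X\in\mathfrak E_0$ to the pair $(PX,X)$, which is visibly a bijection onto the objects of $\elt(F_P)$ (Construction \ref{2cat of elts discrete case}), since $X\in F_P(PX)=\mathfrak E_{PX}$ recovers $PX$. On a $1$-cell $w\colon X\to Y$ with $Pw=f$, the split fibration structure on the underlying functor factors $w$ uniquely as $\phi(f,Y)\circ u_w$ with $u_w\colon X\to f^*Y$ vertical, and $U^{op}$ sends $w$ to $(f,u_w)$. Bijectivity on $1$-cells is then precisely the universal factorization property of the chosen cartesian lifts: every arrow $(f,u)$ of $\elt(F_P)$ arises from the single arrow $\phi(f,Y)\circ u$ of $\mathfrak E$, and distinct $w$ yield distinct factorizations. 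This simultaneously produces the inverse on objects and $1$-cells, sending $(B,X)\mapsto X$ and $(f,u)\mapsto\phi(f,Y)\circ u$.

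The main obstacle is the bijection on $2$-cells, where the discreteness hypothesis of Definition \ref{disc 2-fibration defn} must be put to work. A $2$-cell of $\elt(F_P)$ from $(f,u)$ to $(g,v)$ is a $2$-cell $\alpha\colon f\Rightarrow g$ of $\mathfrak B$ satisfying the triangle $\alpha^*_Y\circ u=v$ in $\mathfrak E_{PX}$, where $\alpha^*$ is the transition $2$-cell built in the pseudo-inverse construction, characterized by equation \ref{eqn defn target lifted 2-cell}. I would send a $2$-cell $\theta\colon w\Rightarrow w'$ of $\mathfrak E$ to $P\theta$, and conversely reconstruct $\theta$ from $\alpha$ as the unique lift with source $1$-cell $w=\phi(f,Y)\circ u$ lying over $\alpha$, which exists and is unique because $P$ is locally a discrete opfibration. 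The crux is to verify that this reconstructed $\theta$ has the prescribed target $w'=\phi(g,Y)\circ v$ and that the triangle condition on $\alpha$ is exactly the fibrewise identity forced by equation \ref{eqn defn target lifted 2-cell}; this is where the definition of $\alpha^*$ and the splitting equations enter, and where local discreteness guarantees that $\theta$ carries no information beyond $\alpha$.

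Once this $2$-cell correspondence is checked to be functorial in both directions, $U^{op}$ is full, faithful, and bijective on objects, $1$-cells and $2$-cells, hence an isomorphism of $2$-categories. Dualizing and transporting along the isomorphism $\ast/F_P\cong\elt(F_P)^{op}$ then identifies $\mathfrak E^{op}$, together with $P^{op}$ and $\lambda$, as the lax comma object $\ast/F_P$, which is the assertion. I expect the verification of the triangle condition against equation \ref{eqn defn target lifted 2-cell} to be the only genuinely delicate point; everything else is the same bookkeeping as in the classical one-dimensional case recorded in Theorem \ref{disc fib set-functor duality thm}.
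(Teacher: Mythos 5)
Your proposal is correct and follows essentially the same route as the paper's (much terser) proof: exhibit the explicit isomorphism $\mathfrak E^{op}\cong \ast/F_P\cong\elt(F_P)^{op}$ over $\mathfrak B^{op}$, with the $1$-cell bijection coming from unique factorization through the chosen cartesian arrows and the $2$-cell bijection from the local discrete opfibration property checked against the defining equation \ref{eqn defn target lifted 2-cell} for $\alpha^*$. The paper records these verifications only as ``straightforward'' subtleties handled by the cartesian lifting properties and uniqueness; your write-up simply fills in those details, so nothing differs in substance.
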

\begin{proof}  One could verify the three aspects of the universal property of the lax comma object.  Alternatively, from the explicit description of $\ast/F_P$ it is straightforward to see that it is isomorphic as a 2-category to $\mathfrak E^{op}$ over $\mathfrak B^{op}$ via projections.  The slight subtleties come in at the level of morphisms and 2-cells, but the assignments are given by the cartesian lifting properties enjoyed by the distinguished morphisms and 2-cells of $\mathfrak E$, with uniqueness ensuring that the assignments are functorial and bijections.  \end{proof}

\begin{cor}  For any discrete 2-fibration $P\colon \mathfrak E\to\mathfrak B$, the canonical map $\mathfrak E\to\elt(F_P)$ is an isomorphism over $\mathfrak B$.  In other words, the assigment $\elt(-)\colon [\mathfrak B^{op},\cat]\to\mathbf{D2Fib}(\mathfrak B)$ is essentially surjective to within isomorphism.
\end{cor}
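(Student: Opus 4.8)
The plan is to obtain the corollary with essentially no new work by composing the two preceding propositions and invoking the uniqueness clause of the universal property of the lax comma object (Proposition \ref{up of lax comma}). Applying the proposition that presents $\elt(F)^{op}$ as $\ast/F$ to the particular 2-functor $F := F_P$ exhibits $\elt(F_P)^{op}$, with its projection $\Pi^{op}$ and universal lax natural transformation, as the lax comma object $\ast/F_P$ over $\mathfrak{B}^{op}$. The immediately preceding proposition exhibits $\mathfrak{E}^{op}$, carrying $P^{op}$ and the transformation $\lambda$ of Construction \ref{lax comma square construction}, as the \emph{same} lax comma object $\ast/F_P$. Since such an object is determined up to isomorphism in $\mathbf{Lax}$, the 1-dimensional clause of Proposition \ref{up of lax comma} produces a unique 2-functor $\mathfrak{E}^{op}\to\elt(F_P)^{op}$ commuting with the projections to $\mathfrak{B}^{op}$ and compatible with the universal transformations, and it is invertible. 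Taking opposites yields the isomorphism $\mathfrak{E}\cong\elt(F_P)$ over $\mathfrak{B}$.

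Next I would identify this abstract comparison with the concrete canonical map named in the statement, in order to justify the terminology. On objects it sends $X\in\mathfrak{E}_0$ to the pair $(PX,X)$, which is legitimate precisely because $X$ lies in the fiber $\mathfrak{E}_{PX}=F_P(PX)$. On an arrow $w\colon X\to Y$ it returns $(Pw,u)$, where $u\colon X\to (Pw)^*Y$ is the unique $P$-vertical factorization of $w$ through the chosen cartesian arrow $\phi(Pw,Y)$, whose existence and uniqueness are exactly the fibration clause of Definition \ref{disc 2-fibration defn}; on a 2-cell it returns the unique local lift supplied by the discreteness clause. One checks that this is precisely the 2-functor produced by the 1-dimensional universal property, so no separate verification of functoriality is required, and its inverse is built symmetrically from the cartesian and opcartesian lifts.

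For the stated reformulation, I would then record that $F_P$ is a genuine object of $[\mathfrak{B}^{op},\cat]$ by the proposition establishing its 2-functoriality, and that $\elt(F_P)$ is isomorphic over $\mathfrak{B}$ to the given discrete 2-fibration $P$; hence every object of $\mathbf{D2Fib}(\mathfrak{B})$ lies in the image of $\elt(-)$ to within isomorphism, which is the asserted essential surjectivity. I do not expect a genuine obstacle here: the only point deserving care is confirming that the comparison commutes with the projections to $\mathfrak{B}$ \emph{strictly}, so that it is a bona fide morphism of $\mathbf{D2Fib}(\mathfrak{B})$ and not merely an equivalence commuting up to a 2-cell. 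This is guaranteed by the strict form of the equations $\src\ast U$ in the 1-dimensional clause of Proposition \ref{up of lax comma}, reinforced by the uniqueness of lifts which forces the two vertical factorizations to agree on the nose.
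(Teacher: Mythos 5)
Your proposal is correct and follows essentially the same route as the paper: both preceding propositions exhibit $\elt(F_P)^{op}$ and $\mathfrak E^{op}$ as presentations of the lax comma object $\ast/F_P$, and the isomorphism over $\mathfrak B$ is the canonical comparison induced by its universal property (Proposition \ref{up of lax comma}), from which essential surjectivity of $\elt(-)$ follows at once. Your additional unwinding of the comparison 2-functor on objects, arrows, and 2-cells, and the observation that strictness over $\mathfrak B$ comes from the strict equations in the universal property, are consistent elaborations of the same argument.
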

\begin{proof}  Since $\elt(F_P)^{op}$ and $\mathfrak E^{op}$ both present the lax comma object, they are canonically isomorphic via the map between them induced by the universal property of $\ast/F_P$.  This shows that the domain 2-category of every discrete 2-fibration occurs, up to isomorphism, as the category of elements of some contravariant category-valued 2-functor, meaning that $\elt(-)$ is essentially surjective up to isomorphism.  \end{proof}

\begin{theo}[Representation Theorem for Discrete 2-Fibrations]\label{theorem duality for disc 2-fibrations} For any 2-category $\mathfrak B$, the assignment $\elt(-)$ extends to a 2-functor making an equivalence of 2-categories
\[ [\mathfrak B^{op},\cat] \simeq \mathbf{D2Fib}(\mathfrak B)
\]
between contravariant category-valued 2-functors on $\mathfrak B$ and discrete 2-fibrations over $\mathfrak B$.
\end{theo}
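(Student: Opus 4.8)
The plan is to exhibit $\elt(-)$ and the fiber construction $F_{(-)}$ as mutually pseudo-inverse 2-functors, using the standard criterion that a 2-functor is a 2-equivalence precisely when it is essentially surjective on objects and locally an equivalence of categories. The preceding corollary already supplies essential surjectivity up to isomorphism, via $\mathfrak E\cong\elt(F_P)$ over $\mathfrak B$ for each discrete 2-fibration $P\colon\mathfrak E\to\mathfrak B$. It thus remains to (i) promote $\elt(-)$ to an honest 2-functor on $[\mathfrak B^{op},\cat]$; (ii) record the reverse composite isomorphism $F_{\elt(F)}\cong F$ as a consistency check and a tool; and (iii) verify that for all $F,G$ the induced functor on hom-categories $[\mathfrak B^{op},\cat](F,G)\to\mathbf{D2Fib}(\mathfrak B)(\elt F,\elt G)$ is an equivalence.

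First I would extend $\elt(-)$ to 1- and 2-cells. A 2-natural transformation $\eta\colon F\Rightarrow G$ should induce a 2-functor $\elt(\eta)\colon\elt(F)\to\elt(G)$ over $\mathfrak B$ sending $(B,X)$ to $(B,\eta_B X)$ and an arrow $(f,u)$ to $(f,\eta_B u)$, which is well-typed because 2-naturality gives $\eta_B f^{*}=f^{*}\eta_C$; the action on 2-cells is through the same cell $\alpha$ of $\mathfrak B$. One checks this preserves the chosen cartesian lifts, hence is a morphism of discrete 2-fibrations, and that a modification $m\colon\eta\Rrightarrow\eta'$ induces a 2-cell $\elt(m)$. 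Strict 2-functoriality then follows by direct inspection, since all the data is computed componentwise from that of $\eta$ and $m$.

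For the reverse composite, given $F\colon\mathfrak B^{op}\to\cat$ I would compute $F_{\elt(F)}$ directly. The fiber of $\Pi\colon\elt(F)\to\mathfrak B$ over $B$ consists of the objects $(B,X)$ with $X\in FB$ and the vertical arrows $(1_B,u)$; since $1_B^{*}=\mathrm{id}$ these $u$ are exactly the morphisms of $FB$, so $F_{\elt(F)}B\cong FB$. The chosen cartesian lift over $f$ recovers $f^{*}$ on objects and arrows, and the transition 2-cell attached to $\alpha\colon f\Rightarrow g$ recovers the action of $F$ on $\alpha$ by the defining equation \eqref{eqn defn target lifted 2-cell}. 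Discreteness guarantees there is no further structure in the fibers, so these isomorphisms assemble into a 2-natural isomorphism $F_{\elt(F)}\cong F$.

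The main obstacle is step (iii), the local equivalence, whose essential content is that every morphism $H\colon\elt(F)\to\elt(G)$ of discrete 2-fibrations arises, up to a unique invertible 2-cell, from a 2-natural transformation $F\Rightarrow G$, and that every 2-cell of $\mathbf{D2Fib}(\mathfrak B)$ comes from a unique modification. Here the discreteness assumption does the essential work: because $\Pi$ is locally a discrete opfibration, a morphism over $\mathfrak B$ is forced on arrows and 2-cells by its action on objects together with the cartesian lifts, which is precisely the rigidity needed to reconstruct the transformation and to see that distinct transformations give non-isomorphic morphisms. I would organize this reconstruction through the universal property of the lax comma object (Proposition \ref{up of lax comma}): a morphism $\elt(F)\to\elt(G)$ over $\mathfrak B$ corresponds to a comparison of lax comma squares $\ast/F\to\ast/G$, which by 1- and 2-dimensional universality amounts to lax natural data between the classifying 2-functors, and discreteness collapses the lax comparison cells to identities, yielding exactly a 2-natural transformation together, one dimension up, with a modification. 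Combining this local equivalence with the essential surjectivity already established gives the asserted 2-equivalence.
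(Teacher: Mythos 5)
Your overall skeleton coincides with the paper's: essential surjectivity is taken from the corollary on lax comma squares, $\elt(-)$ is extended to 2-natural transformations and modifications by exactly the componentwise formulas $(B,X)\mapsto(B,\eta_BX)$, $(f,u)\mapsto(f,\eta_Bu)$, $(1,m_{B,X})$ that the paper uses, and the theorem is reduced to the hom-level bijections, which the paper delegates to \S 2.2.11 of \cite{Buckley}. Your computation $F_{\elt(F)}\cong F$ is a correct but inessential addition. The one place you genuinely diverge is step (iii), and that is where there is a gap.

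You propose to organize fullness through the universal property of the lax comma object, but Proposition \ref{up of lax comma} does not deliver what you claim. Its 1-dimensional clause classifies 2-functors $\mathfrak D\to \ast/G$ by lax natural transformations from the constant terminal-category functor to $G\circ K$; taking $\mathfrak D=\ast/F\cong\elt(F)^{op}$ therefore converts a morphism $H$ over $\mathfrak B$ into lax data indexed by the \emph{total} 2-category $\elt(F)^{op}$, not into ``lax natural data between the classifying 2-functors'' $F$ and $G$. Extracting $\eta\colon F\Rightarrow G$ from such data still requires the fiberwise reconstruction (restrict $H$ to fibers, use preservation of the chosen cartesian arrows to get $\eta_B f^{*}=f^{*}\eta_C$ strictly, use local discreteness for the 2-cell condition) --- that is, exactly the direct argument of \cite{Buckley}, so the comma detour does no work. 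Moreover, the claim that ``discreteness collapses the lax comparison cells to identities'' misattributes the mechanism: the lax cell attached to an arrow $(f,u)$ encodes $u$ itself and is generically non-identity, and discreteness only makes the fibers 1-categories with unique local 2-cell lifts --- it does \emph{not} make the underlying split fibration discrete, since 1-cell lifts over a given arrow are far from unique. Consequently an arbitrary 2-functor over $\mathfrak B$ from $\elt(F)$ to $\elt(G)$ need not be of the form $\elt(\eta)$ for a 2-natural $\eta$ (already in dimension 1, a functor over the base between split fibrations corresponds to pseudo-natural data unless it preserves the splitting); strict 2-naturality of the reconstructed $\eta$ comes from the splitting-preservation hypothesis in the definition of morphisms of $\mathbf{D2Fib}(\mathfrak B)$, which the universal property of $\ast/G$ cannot see, while discreteness buys the uniqueness statements (faithfulness and the bijection on 2-cells, where the paper in fact gets local \emph{isomorphism} of hom-categories, slightly stronger than your local equivalence). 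With the direct reconstruction substituted for the comma detour --- which you in fact sketch in the sentence preceding it --- your proof becomes the paper's.
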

\begin{proof}  The result is proved if it can be shown that $\elt(-)$ extends to a 2-functor that is locally an isomorphism on ``hom-categories," since it is already known that such a 2-functor is essentially surjective on objects up to isomorphism.  Given a 2-natural transformation $\alpha\colon F\Rightarrow G$ of 2-functors $F,G\colon \mathfrak B^{op}\rightrightarrows\cat$, define $\elt(\alpha)\colon \elt(F)\to\elt(G)$ on 0-, 1-, and 2-cells of $\elt(F)$ by
$$\begin{tikzpicture}
\node(1){$(B,X)$};
\node(2)[node distance=.7in, right of=1]{$\Downarrow \theta$};
\node(3)[node distance=.7in, right of=2]{$(C,Y)$};
\node(4)[node distance=.6in, right of=3]{$\mapsto$};
\node(5)[node distance=.6in, right of=4]{$(B,\alpha_BX)$};
\node(6)[node distance=.7in, right of=5]{$\Downarrow \theta $};
\node(7)[node distance=.7in, right of=6]{$(C,\alpha_CY)$};
\draw[->,bend left](1) to node [above]{$(f,u)$}(3);
\draw[->,bend right](1) to node [below]{$(g,v)$}(3);
\draw[->,bend left](5) to node [above]{$(f,\alpha_Bu)$}(7);
\draw[->,bend right](5) to node [below]{$(g,\alpha_Bv)$}(7);
\end{tikzpicture}$$
These are well-defined and functorial by the construction of the 2-categories of elements and the 2-naturality properties of $\alpha$.  Similarly, given a modification $m\colon \alpha\Rrightarrow \beta$ of 2-natural transformations $\alpha,\beta\colon F\Rightarrow G$, define a transformation $\elt(m)\colon \elt(\alpha)\Rightarrow \elt(\beta)$ by taking as component at $(B,X)$ in $\elt(F)$ the arrow
\[ (1,m_{B,X})\colon (B,\alpha_BX)\to(B,\beta_BX)
\]
of $\elt(G)$.  Of course this is 2-natural and vertical over $\mathfrak B$ by construction and the modification property of $m$ and that each $m_B$ is natural.

These assignments on arrows and 2-cells are thus well-defined and 2-functorial by construction.  Moreover they are the same as in \S 2.2.11 of \cite{Buckley} where it is shown that they are indeed bijections.  The same proofs work in the present context.  \end{proof}

\subsubsection{Further Examples}

As a result of Theorem \ref{theorem duality for disc 2-fibrations}, some light can be thrown on several of the examples encountered so far.

\begin{example}
The canonical representable functor $\mathfrak B(-,X)\colon \mathfrak B^{op}\to\mathbf{CAT}$ corresponds to the domain projection from the slice category $d_0\colon \mathfrak B/X\to \mathfrak B$ of Example \ref{domain projection example}.
\end{example}

\begin{example}

The ``hyperdoctrine"
\[ P\colon \mathbf{Cat}^{op}\to\mathbf{CAT}\qquad \C \mapsto [\C^{op},\mathbf{Set}]
\]
corresponds to the discrete 2-fibration $\mathrm{cod}\colon \mathbf{DFib} \to\mathbf{Cat}$ of Example \ref{codomain example}. For there is an equivalence of 2-categories 
\[ \mathbf{DFib}\simeq \mathbf{Elt}(P) 
\]
commuting with the projection morphisms to $\mathbf{Cat}$.

\end{example}

\begin{example}[Cf. Examples \ref{family 2fibn} and \ref{small fam example}]  Fix a 2-category $\mathfrak B$.  Notice that a 2-functor $\C\to \mathfrak B$ from any 1-category is really a functor $\C\to |\mathfrak B|$ since $\C$ has no nontrivial 2-cells.  Thus, consider the 2-functor taking a category to functors into $\mathfrak B$, that is,
\[  [-,\mathfrak B]\colon \mathbf{Cat}^{op}\to\mathbf{Cat}\qquad \C\mapsto [\C, \mathfrak B]
\]
and extended suitably to functors and transformations.  The image of any such category under $ [-,\mathfrak B]$ is of course a strict 1-category.  Thus, the corresponding 2-category of elements
\[  \Pi \colon \elt( [-,\mathfrak B]) \to \mathbf{Cat}
\]
is a discrete 2-fibration.  The total 2-category $\elt( [-,\mathfrak B])$ is essentially $\mathbf{fam}(\mathfrak B)$ from Example \ref{small fam example}.  
\end{example}

\begin{example}  Let $t$ denote a 2-monad in a 2-category $\K$.  The 2-functor
\[ t\mathbf{Alg}(-)\colon \K^{op}\to\mathbf{Cat} \qquad A \mapsto t\mathbf{Alg}(A)
\]
taking $A\in \K_0$ to the category of $t$-algebras with domain $A$ as in \S 3 of \cite{KS} corresponds under the restricted equivalence of Theorem \ref{restricted equivalence theorem} to the discrete 2-fibration
\[ \Pi\colon t\mathbf{Alg} \to\K
\]
of Example \ref{algebra for monad in K discrete example}.
\end{example}

\subsection{2-Fibrations}

Here is recalled the idea of a 2-fibration, following established developments in the literature.  It will be seen that every discrete 2-fibration is a kind of 2-fibration.  Stating the definition involves first a categorification of the definition of ``cartesian morphism," as follows.

\begin{define}[\S 2 \cite{Hermida} and \S 2.2.1 of \cite{Buckley}]  Let $P\colon \mathfrak E\to\mathfrak B$ denote a 2-functor.  An arrow $f\colon A\to B$ of $\mathfrak E$ is \textbf{cartesian} if it satisfies the following two conditions.
\begin{enumerate}
\item Whenever $g\colon C\to B$ is an arrow of $\mathfrak E$ for which there is a morphism $h\colon PC\to PA$ making a commutative triangle in $\mathfrak B$ as on the right 
$$\begin{tikzpicture}
\node(1){$C$};
\node(2)[node distance=.7in, below of=1]{$A$};
\node(3)[node distance=1in, right of=2]{$B$};
\node(4)[node distance=2.4in, right of=1]{$PC$};
\node(5)[node distance=.7in, below of=4]{$PA$};
\node(6)[node distance=1in, right of=5]{$PB$};
\draw[->,dashed](1) to node[left]{$\hat h$}(2);
\draw[->,bend left](1) to node[above]{$g$}(3);
\draw[->](2) to node[below]{$f$}(3);
\draw[->](4) to node[left]{$h$}(5);
\draw[->,bend left](4) to node[above]{$Pg$}(6);
\draw[->](5) to node[below]{$Pf$}(6);
\end{tikzpicture}$$
it follows that there is a unique $\hat h\colon C\to A$ in $\mathfrak E$ with $P\hat h = h$ making a commutative triangle in $\mathfrak E$ as on the left above.
\item Whenever $\theta\colon g\Rightarrow k$ is a 2-cell of $\mathfrak E$ for which there is a 2-cell $\gamma\colon h \Rightarrow l$ of $\mathfrak B$ making a commutative diagram of composed 2-cells in $\mathfrak B$ as at right below
$$\begin{tikzpicture}
\node(1){$C$};
\node(2)[node distance=.8in, below of=1]{$A$};
\node(3)[node distance=1in, right of=2]{$B$};
\node(4)[node distance=2.4in, right of=1]{$PC$};
\node(5)[node distance=.8in, below of=4]{$PA$};
\node(6)[node distance=1in, right of=5]{$PB$};
\node(7)[node distance=.4in, below of=1]{$\Rightarrow$};
\node(8)[node distance=2.4in, right of=7]{$\Rightarrow$};
\node(9)[node distance=.8in, right of=1]{$$};
\node(10)[node distance=.2in, below of=9]{$\Rightarrow$};
\node(11)[node distance=2.4in, right of=10]{$\Rightarrow$};
\draw[->,bend left,dashed](1) to node[right]{$\hat l$}(2);
\draw[->,bend right,dashed](1) to node[left]{$\hat h$}(2);
\draw[->,bend left](1) to node[below]{$g\;$}(3);
\draw[->,bend left=90](1) to node[above]{$k$}(3);
\draw[->](2) to node[below]{$f$}(3);
\draw[->,bend left](4) to node[right]{$l$}(5);
\draw[->,bend right](4) to node[left]{$h$}(5);
\draw[->,bend left](4) to node[below]{$Pg\;\;$}(6);
\draw[->,bend left=90](4) to node[above]{$Pk$}(6);
\draw[->](5) to node[below]{$Pf$}(6);
\end{tikzpicture}$$
it follows that there is a unique lift 2-cell $\hat \gamma\colon \hat h\Rightarrow \hat l$ in $\mathfrak E$ with $P\hat\gamma=\gamma$ making a commutative diagram of composed 2-cells in $\mathfrak E$ as on the left immediately above.
\end{enumerate}
\end{define}

\begin{define}[Cf. Defn. 2.3 of \cite{Hermida}, \S 2.1.6 of \cite{Buckley}] \label{2fibn defn} A 2-functor $P\colon \mathfrak E\to\mathfrak B$ is a \textbf{2-fibration} if
\begin{enumerate}
\item for each arrow $f\colon B\to PE$ of $\mathfrak B$ there is a cartesian arrow $g\colon A\to E$ of $\mathfrak E$ with $Pg=f$;
\item locally $P$ is an opfibration in that each $P_{A,B}\colon \mathfrak E(A,B)\to\mathfrak B(PA,PB)$ is an opfibration;
\item and finally opcartesian 2-cells are closed under horizontal composition.
\end{enumerate}
Such 2-fibrations will always assumed to be split in the sense of Definition \ref{split 2-fibn defn} below.  The (split) 2-fibrations over $\mathfrak B$ are objects of a 3-category $\mathbf{2Fib}(\mathfrak B)$ whose arrows are splitting-preserving 2-functors over $\mathfrak B$, together with vertical 2-natural transformations, and vertical modifications between them. 
\end{define}

\subsubsection{Spliting Equations}

\label{section splitting eqns}

Let $P\colon \mathfrak E\to\mathfrak B$ denote a 2-fibration as in Definition \ref{2fibn defn}.  A \textbf{cleavage} specifies for each $f\colon B\to C$ of the base and each $X\in \mathfrak E_C$ a chosen cartesian arrow 
\[ \phi(f,X)\colon f^*X \to X
\]
over $f$; and for each 2-cell $\alpha\colon Pf\Rightarrow g$ of the base an opcartesian 2-cell
\[ \phi(\alpha, h) \colon h\Rightarrow \alpha_!h
\]
over $\alpha$, each of $\mathfrak E$.  The splitting equations assert that these choices are functorial in a precise way.  These are adaptations of \S 2.1.10 of \cite{Buckley} suited for our conventions.

For composable arrows $f\colon B\to C$ and $g\colon C\to D$ in $\mathfrak B$ and an object $X\in \mathfrak E_D$, the splitting equation for 1-cells is the usual one, namely,
\begin{equation} \label{splitting 1cell}
\phi(gf, X) = \phi(g,X)\phi(f,g^*X).
\end{equation}
For vertically composable 2-cells $\alpha\colon Pf\Rightarrow g$ and $\beta\colon g\Rightarrow h$, the splitting equation is
\begin{equation}
\label{splitting 2cell vert}
\phi(\beta\alpha,f)=\phi(\beta,\alpha_!f)\phi(\alpha,f).
\end{equation}
For horizontally composable 2-cells $\alpha\colon Pf\Rightarrow h$ and $\beta\colon Pg\Rightarrow k$, the splitting equation is
\begin{equation}
\label{splitting 2cell horiz}
\phi(\beta\ast\alpha,gf)=\phi(\beta,g)\ast\phi(\alpha,f).
\end{equation}
Finally, there are the two splitting equations for identities, namely,
\begin{equation} \label{splitting ident 1cells}
\phi(1_PX,X) = 1_X
\end{equation}
\begin{equation}\label{splitting identi 2cells}
\phi(1_{1_f},f)= 1_f
\end{equation}
for any $X\in \mathfrak E_0$ and any $f\in \mathfrak E_1$.

\begin{define} \label{split 2-fibn defn} A 2-fibration $P\colon \mathfrak E\to\mathfrak B$ is \textbf{split} if it is cloven with cleavage $\phi(-,-)$ satisfying equations \ref{splitting 1cell}, \ref{splitting 2cell vert},\ref{splitting 2cell horiz},\ref{splitting ident 1cells}, and \ref{splitting identi 2cells} above.  Let $\mathbf{2Fib}(\mathfrak B)$ denote the 3-category of split 2-fibrations over $\mathfrak B$, splitting-preserving morphisms, transformations with vertical components and vertical modifications.
\end{define}

\subsubsection{Examples}

The concept of a 2-fibration is a generalization of that of a discrete 2-fibration, just as the concept of a fibration generalizes that of a discrete fibration.

\begin{prop}  Every discrete 2-fibration as above is a 2-fibration as in Definition \ref{2fibn defn}.
\end{prop}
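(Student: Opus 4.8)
The plan is to verify, one clause at a time, the three conditions of Definition \ref{2fibn defn} for a discrete 2-fibration $P\colon\mathfrak E\to\mathfrak B$, drawing on the two structural hypotheses: that $|P|\colon|\mathfrak E|\to|\mathfrak B|$ is a split fibration, and that $P$ is locally a discrete opfibration (as recorded in Proposition \ref{disc 2-fib lemma}).

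For the first clause I would take, for each $f\colon B\to PE$ of $\mathfrak B$, the chosen cartesian lift $\phi(f,E)\colon f^*E\to E$ supplied by the splitting of $|P|$, and argue that it is cartesian in the full two-clause sense. Its $1$-dimensional lifting property is precisely the cartesian property of $\phi(f,E)$ in the underlying split fibration, so that clause holds at once. For the $2$-dimensional clause, suppose given $\theta\colon g\Rightarrow k$ in $\mathfrak E$ and $\gamma\colon h\Rightarrow l$ in $\mathfrak B$ satisfying the stated compatibility over $P$, with $1$-cell lifts $\hat h,\hat l$ already produced by the first clause. Writing $C$ for the common source, I would use that $P\colon\mathfrak E(C,f^*E)\to\mathfrak B(PC,P(f^*E))$ is a discrete opfibration to lift $\gamma$ to a unique $2$-cell $\hat\gamma$ with $P\hat\gamma=\gamma$ and source $\hat h$; a priori its target is only some lift $\gamma_!\hat h$ lying over $l$.

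The main obstacle is to identify this target with $\hat l$, and I expect to clear it as follows. Whiskering yields a $2$-cell $\phi(f,E)\ast\hat\gamma$ lying over $f\ast\gamma=P\theta$ with source $\phi(f,E)\,\hat h=g$. But $\theta$ is another $2$-cell of $\mathfrak E(C,E)$ with the same source $g$ and the same image $P\theta$, so discreteness of the opfibration on $\mathfrak E(C,E)$ forces $\phi(f,E)\ast\hat\gamma=\theta$; comparing targets gives $\phi(f,E)\,(\gamma_!\hat h)=k=\phi(f,E)\,\hat l$. The uniqueness in the $1$-dimensional cartesian property then yields $\gamma_!\hat h=\hat l$, so $\hat\gamma\colon\hat h\Rightarrow\hat l$ is the required lift, and it is unique because the lift of $\gamma$ out of $\hat h$ was unique. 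Hence $\phi(f,E)$ is cartesian and the first clause of Definition \ref{2fibn defn} holds.

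The remaining clauses are routine. The second holds because a discrete opfibration is in particular an opfibration, so each $P_{A,B}$ is an opfibration. The third is automatic: in a discrete opfibration every morphism is opcartesian, whence every $2$-cell of $\mathfrak E$ is opcartesian, and closure under horizontal composition is immediate. Finally I would observe that the cleavage of $|P|$ together with the unique local lifts assemble into a splitting obeying the equations of \S\ref{section splitting eqns} --- the $1$-cell equation \eqref{splitting 1cell} is inherited from $|P|$, while the $2$-cell equations hold by uniqueness of lifts in the discrete local opfibrations --- so that $P$ is in fact a split $2$-fibration in the sense of Definition \ref{split 2-fibn defn}.
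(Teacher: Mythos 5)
Your proposal is correct and follows essentially the same route as the paper's (much terser) proof: the cartesian $1$-cells come from the splitting of $|P|$, the $2$-cell lifting clause from the local discrete opfibrations, and closure of opcartesian $2$-cells under horizontal composition from the uniqueness in the local discreteness. Your whiskering argument identifying $\gamma_!\hat h$ with $\hat l$ is a correct and welcome filling-in of the detail the paper leaves implicit in the phrase ``the 2-cell lifting condition follows because locally $E$ is a discrete opfibration.''
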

\begin{proof}  That there is a required cartesian 1-cell follows since $|E|\colon |\mathfrak E|\to |\mathfrak B|$ is a split fibration; but in particular the 2-cell lifting condition follows because locally $E$ is a discrete opfibration.  That the horizontal composition of opcartesian 2-cells is again opcartesian also follows from the fact that $E$ is locally a discrete opfibration (in particular the uniqueness aspect of the definition).  \end{proof}

\begin{example} In particular, domain projection from the lax slice of a 2-category
\[ \dom\colon \mathfrak C/X\to\mathfrak C
\]
is a 2-fibration in the sense of Definition \ref{2fibn defn}.  This is the 2-categorical analogue of domain projection from the usual slice $\dom\colon \C/X\to\C$ of a 1-category $\C$.
\end{example}

\begin{example}[Source 2-Fibration]  The source 2-functor from squares in a 2-category
\begin{equation} \label{domain fibn from squares} \src \colon \mathbf{Sq}(\mathfrak C)\to\mathfrak C
\end{equation}
from Example \ref{2cat of squares example} is a 2-fibration in the present sense of Definition \ref{2fibn defn}.  This is a lax 2-categorical analogue of the usual domain fibration $\mathrm{dom}\colon \C^{\mathbf 2}\to\C$ for an ordinary category $\C$ whose fibers are the coslice categories $X/\C$.  For the fibers of \ref{domain fibn from squares} are basically the lax coslice 2-categories $X/\mathfrak C$. 
\end{example}

\begin{example}[Target 2-Fibration]  Let $\mathfrak B$ denote a 2-category.  Again consider $\mathbf{Sq}(\mathfrak B)$ from Example \ref{2cat of squares example}, but this time also the target 2-functor
\begin{equation} \tgt\colon\mathbf{Sq}(\mathfrak B)\to \mathfrak B.
\end{equation}
Suppose that $\mathfrak B$ has (stirct) comma squares (see \S 1 of \cite{StreetFibrations} for example).  The target 2-functor is then a 2-fibration as in Definition \ref{2fibn defn}.  The presence of comma squares suffice for constructing both the cartesian 1-cells and the opcartesian 2-cells locally.  This 2-functor $\tgt$ is the analogue of the codomain fibration $\cod\colon \C^{\mathbf 2}\to\C$ whenever $\C$ is a category with pullbacks.  For the fibers of $\tgt$ are the lax slices $\mathfrak B/X$.
\end{example}

\begin{example}[Category-Indexed Families; Cf. \S 2.3.1 of \cite{Buckley}] \label{family 2fibn} Let $\mathbf{Fam}(\mathfrak B)$ denote the 2-category of ``families" in a 2-category $\mathfrak B$.  That is, the objects are pseudo-functors $F\colon \C\to\mathfrak B$ from small 1-categories $\C$.  Arrows $(\C,F)\to (\D,G)$ are pairs $(H,\alpha)$ where $H\colon \C\to \D$ is a functor and $\alpha$ is a pseudo-natural transformation $\alpha\colon F\Rightarrow GH$.  Finally 2-cells are just appropriate pairs $(\sigma, m)$ of a pseudo-natural transformation $\sigma$ and a modification $m$ as in \S 2.3.1 of \cite{Buckley} adapted for our co- rather than the contravariant families of the reference.  The projection
\[ \Pi\colon \mathbf{Fam}(\mathfrak B)\to\mathbf{Cat}
\]
is then a 2-fibration in the sense of Definition \ref{2fibn defn}.  The proof is essentially the same as in \S 2.3.2 of \cite{Buckley} with suitable adaptations for the fact that families are here covariant rather than covariant pseudo-functors $\C^{op}\to\mathfrak B$ as in the reference. 
\end{example}

\begin{example}  Let $\mathbf{Opf}$ denote the 2-category of opfibrations, appropriate pairs of functors and certain pairs of 2-cells.  The 2-functor
\[ \mathrm{cod}\colon \mathbf{Opf}\to \mathbf{Cat}
\]
is a 2-fibration in the present sense of Definition \ref{2fibn defn}.  For opfibrations are stable under pullback.  This yields the desired cartesian 1-cell with the proper 1- and 2-cell lifting properties.  And locally the 2-functor $\mathrm{cod}$ is an opfibration because the objects of the 2-category are themselves opfibrations, which means that 2-cells lift appropriately to give the required opcartesian 2-cells locally.  Notice that this doesn't appear to require that the top functor of any morphism of $\mathbf{Opf}$ preserves opcartesian morphisms, but we ask for this anyway, so that $\mathrm{cod}$ corresponds under the 3-equivalence to the 2-functor 
\[ \mathbf{Cat}^{op}\to \mathbf{2Cat} \qquad \C\mapsto \mathbf{Opf}(\C)
\]
since the morphisms of $\mathbf{Opf}(\C)$ are generally taken to be opcartesian-morphism-preserving.
\end{example}

\begin{example}[Cf. \S 2.3.12 of \cite{Buckley}] \label{algebras for monads in K example} Take a 2-category $\K$.  Let $\mathbf{Mnd}(\K)$ denote the 2-category of monads in $\K$ and $\mathbf{Alg}_{oplax}(\K)$ the 2-category of pairs $(S,(A,m))$ with $S$ a 2-monad on $\K$ and $(A,m)$ an $S$-algebra; and with morphisms pairs consisting of a 2-monad morphism and an oplax morphisms of algebras; and appropriate 2-cells.  The forgetful 2-functor
\[ \Pi\colon \mathbf{Alg}_{oplax}(\K)\to \mathbf{Mnd}(\K)
\]
is a 2-fibration in the sense of Definition \ref{2fibn defn}.
\end{example}

\begin{example} \label{algebra for a monad in K example} In a related vein, the forgetful 2-functor
\[ \Pi\colon t\mathbf{Alg} \to\K
\]
from $t$-algebras as in Example \ref{algebra for monad in 2cat eg} is a 2-fibration as in Definition \ref{2fibn defn}, since it is a discrete 2-fibration.
\end{example}

The next construction provides more examples and sheds light on both of those above.  Higher 2-categories of elements originated in Bird's thesis \cite{BirdThesis} and appeared later in \S 1,2.5 of \cite{GrayFormalCats}.  The appropriate version for 2-category-valued functors is the following.

\begin{construction}[Adapted from \S 2.2.1 \cite{Buckley}] \label{2cat of elts} For any 3-functor $F\colon \mathfrak B^{op}\to\twocat$ on a 2-category $\mathfrak B$, the \textbf{2-category of elements} of $E$ is the 2-category whose
\begin{enumerate}
\item objects are pairs $(B,X)$ with $B\in \mathfrak B_0$ and $X\in FB$;
\item arrows are pairs $(f,u)\colon (B,X)\to (C,Y)$ with $f\colon B\to C$ in $\mathfrak B$ and $u\colon X\to f^*Y$ in the fiber $FB$;
\item and whose 2-cells $\colon (f,u)\Rightarrow (g,v)$ are those pairs $(\alpha,\sigma)$ where $\alpha\colon f\Rightarrow g$ is in $\mathfrak B$ and $\sigma$ is a 2-cell
$$\begin{tikzpicture}
\node(1){$f^*Y$};
\node(2)[node distance=.4in, below of=1]{$$};
\node(3)[node distance=.4in, below of=2]{$g^*Y$};
\node(4)[node distance=1.2in, left of=2]{$X$};
\node(5)[node distance=.4in, left of=2]{$\Downarrow \sigma$};
\draw[->](4) to node [above]{$u$}(1);
\draw[->](1) to node [right]{$\alpha^*_Y$}(3);
\draw[->](4) to node [below]{$v$}(3);
\end{tikzpicture}$$ 
of the 2-category $FB$.
\end{enumerate}
Denote this 2-category by $\elt(E)$.  There is an evident projection 2-functor $\Pi\colon \elt(E)\to \mathfrak B$.
\end{construction}

The work of \S 2 of \cite{Buckley} consists in showing that the category of elements construction above extends to a 3-functor $\elt(-)\colon \mathbf{2Fib}(\mathfrak B)\to [\mathfrak B^{coop},\mathbf{2Cat}]$ with suitable pseudo-inverse making an equivalence of 3-categories.  The following result makes good on the desiderata from the introduction.

\begin{theo}[Restricted Equivalence] \label{restricted equivalence theorem}  For any 2-category $\mathfrak B$, the 3-equivalence
\[ [\mathfrak B^{op},\mathbf{2Cat}] \simeq \mathbf{2Fib}(\mathfrak B)
\]
restricts to one
\[ [\mathfrak B^{op},\mathbf{Cat}] \simeq \mathbf{D2Fib}(\mathfrak B)
\]
pseudo-naturally in $\mathfrak B$.
\end{theo}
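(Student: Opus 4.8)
The plan is to realize the desired restriction as a dimension-collapsing special case of Buckley's 3-equivalence $[\mathfrak B^{op},\mathbf{2Cat}] \simeq \mathbf{2Fib}(\mathfrak B)$, exactly mirroring the way Theorem \ref{disc fib set-functor duality thm} sits inside Theorem \ref{opfibration psdfunctor correspondence} in Remark \ref{desiderata restrict equiv}. Two key facts are already in hand: every discrete 2-fibration is a 2-fibration (by the Proposition above), so there is a forgetful inclusion $\mathbf{D2Fib}(\mathfrak B) \hookrightarrow \mathbf{2Fib}(\mathfrak B)$; and a category is a locally discrete 2-category, so the inclusion $\mathbf{Cat}\hookrightarrow\mathbf{2Cat}$ induces an inclusion $[\mathfrak B^{op},\cat]\hookrightarrow[\mathfrak B^{op},\mathbf{2Cat}]$ by postcomposition. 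The first step is to record that both inclusions land in the sub-structure on which the top-dimensional cells are forced to be trivial: a modification of category-valued 2-functors admits no nontrivial perturbations, and a vertical modification between morphisms of discrete 2-fibrations is uniquely determined, since its components are 2-cells in the fibers, which by the fiber lemma are ordinary categories. Thus each inclusion exhibits a 2-category as the locally discrete part of a 3-category.

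The second step is to show that the category of elements 3-functor restricts along these inclusions. On objects this is immediate from comparing Construction \ref{2cat of elts} with Construction \ref{2cat of elts discrete case}: applying the general construction to a functor valued in $\mathbf{Cat}\hookrightarrow\mathbf{2Cat}$ forces the 2-cell datum $\sigma$ in each pair $(\alpha,\sigma)$ to be an identity, since the target 2-category $FB$ is locally discrete, so the general construction collapses onto the discrete one; and by Proposition \ref{disc 2-fib lemma} the resulting projection is a discrete 2-fibration. Dually, the pseudo-inverse of \cite{Buckley} sends a discrete 2-fibration $P$ to the 2-functor $F_P$ whose values are the fibers $\mathfrak E_B$, which are ordinary categories by the fiber lemma; hence $F_P$ factors through $\mathbf{Cat}\hookrightarrow\mathbf{2Cat}$ and agrees with the construction used in Theorem \ref{theorem duality for disc 2-fibrations}. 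The compatibility of both assignments with morphisms, 2-cells, and the trivial top cells is exactly what was verified directly in the proof of Theorem \ref{theorem duality for disc 2-fibrations}, so the square of inclusions and equivalences commutes up to the canonical isomorphisms already constructed there; this identifies the restricted equivalence with the one of that theorem.

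The final step is pseudo-naturality in $\mathfrak B$. A 2-functor $K\colon\mathfrak B'\to\mathfrak B$ induces restriction $K^*\colon[\mathfrak B^{op},\cat]\to[(\mathfrak B')^{op},\cat]$ by precomposition and, on the fibrational side, a reindexing 2-functor obtained from the lax-comma (pullback) universal property. I would check that reindexing a discrete 2-fibration along $K$ again yields a discrete 2-fibration: the underlying pullback of a split fibration is a split fibration, and local discreteness is preserved because it is a fiberwise condition. Hence $K^*$ preserves both columns. Since Buckley's equivalence is already pseudo-natural in the base, the comparison cells for the restricted equivalence are obtained by restricting his, and their coherence is inherited; one need only verify that these restricted comparison cells again land in the locally discrete part, which follows from the two observations of the first step.

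The main obstacle is not any single calculation but the bookkeeping of the dimension collapse: one must be careful that the 3-cells (perturbations and vertical modifications) which Buckley's 3-equivalence matches up are genuinely trivial on both restricted structures, so that a 3-equivalence restricts to an honest equivalence of 2-categories rather than merely a biequivalence carrying spurious higher data. Once this triviality is confirmed via the fiber lemma and the absence of nontrivial perturbations into $\mathbf{Cat}$, the restricted statement reduces to Theorem \ref{theorem duality for disc 2-fibrations} together with the two inclusions, and pseudo-naturality is inherited from the ambient 3-equivalence.
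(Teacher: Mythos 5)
Your proposal is correct and follows essentially the same route as the paper, which likewise proves the statement by invoking Buckley's 3-equivalence (adapted to the $op$/locally-opfibration conventions) and observing that Constructions \ref{2cat of elts discrete case} and \ref{2cat of elts} agree on $\mathbf{Cat}$-valued 2-functors via the inclusion $\cat\to\mathbf{2Cat}$. You in fact supply more detail than the paper's two-sentence proof --- notably the explicit verification that perturbations and vertical modifications trivialize (via the fiber lemma) and the pseudo-naturality argument --- all of which is sound.
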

\begin{proof}  Although the 2-cell conditions differ, it can be seen from the proofs of \S 2 in \cite{Buckley} that the 3-equivalence   
\[ [\mathfrak B^{op},\mathbf{2Cat}] \simeq \mathbf{2Fib}(\mathfrak B)
\]
does hold.  Note that the two 2-category of elements constructions for a 2-functor $F\colon \mathfrak B^{op}\to\cat$ are the same whether $F$ is viewed as a 2-functor (Construction \ref{2cat of elts discrete case}) or as a degenerate 3-functor (Construction \ref{2cat of elts}) via the inclusion $\cat\to\mathbf{2Cat}$.  \end{proof}

\begin{remark}  The result is thus that the 3-equivalence restricts to a 2-equivalence making a commutative square
$$\begin{tikzpicture}
\node(1){$\mathbf{2Fib}(\mathfrak B)$};
\node(2)[node distance=1.2in, right of=1]{$[\mathfrak B^{op},\mathbf{2Cat}]$};
\node(3)[node distance=.7in, below of=1]{$\mathbf{D2Fib}(\mathfrak B)$};
\node(4)[node distance=.7in, below of=2]{$[\mathfrak B^{op},\mathbf{Cat}]$};
\node(5)[node distance=.5in, right of=1]{$$};
\node(6)[node distance=.4in, below of=5]{$$};
\draw[->](1) to node [above]{$\simeq$}(2);
\draw[->](3) to node [left]{$\mathrm{incl}$}(1);
\draw[->](4) to node [right]{$\mathrm{incl}$}(2);
\draw[->](3) to node [below]{$\simeq$}(4);
\end{tikzpicture}$$
making good on Desiderata \ref{desiderata restrict equiv} from the introduction.
\end{remark}

\section{Monadicity}

In this section, the other main result of the paper is given in Theorem \ref{MAIN THM2 Monadicity}, namely, that discrete 2-fibrations are monadic over a slice of $\cat$.  This should be seen as a 2-categorical analogue of the well-known result that ordinary discrete fibrations are monadic over a slice of $\mathbf{Set}$.  This result is reviewed in the first subsection, along with its extension to an analogous result for fibrations.  The main theorem appears in the second subsection, while the final subsection is a meditation on the possibility of extending the monadicity result to 2-fibrations.

\label{section: monadicity}

\subsection{(Discrete) Fibrations and Monadicity}

\label{classical case of monadicity}

The theory of ordinary monads is well-known and recounted in \cite{MacLane}, for example.  The main example of a ``monadic" category is, for us, the category of discrete fibrations over a fixed base category. For this, let $\C$ denote an ordinary small category.  Define a functor $T\colon \mathbf{Set}/\C_0\to\mathbf{Set}/\C_0$ by taking $f\colon X\to \C_0$ to the projection from the pullback
$$\begin{tikzpicture}
\node(1){$\C_1\times_{\C_0}X$};
\node(2)[node distance=1in, right of=1]{$X$};
\node(3)[node distance=.7in, below of=1]{$\C_1$};
\node(4)[node distance=.7in, below of=2]{$\C_0$};
\node(5)[node distance=.25in, right of=1]{$$};
\node(6)[node distance=.2in, below of=5]{$\lrcorner$};
\draw[->](1) to node [above]{$\pi_2$}(2);
\draw[->](1) to node [left]{$d_1^*f$}(3);
\draw[->](2) to node [right]{$f$}(4);
\draw[->](3) to node [below]{$d_1$}(4);
\end{tikzpicture}$$
composed with the domain arrow $d_0\colon \C_1\to\C_0$.  Thus, in other words, define $T= d_0\circ d_1^*(f)$.  The arrow assignment is induced by the universal property of the pullback.  So defined, $T$ is an ordinary monad on $\mathbf{Set}/\C_0$.  Now, if $F\colon \F\to\C$ is a discrete fibration, define an action
\[ M\colon \C_1\times_{\C_0} \F_0\to \F_0 \qquad (f,x) \mapsto f^*x
\]
by taking $(f,x)$ with $d_1f=fx$ to the domain of the unique arrow of $\F$ over $f$, denoted by $f^*x$.  This is an action of $\C_1$ on $\F_0$ and makes $F$ into a $T$-algebra.  Universality of the constructions then yields a functor $\mathbf{DFib}(\C) \to T\mathbf{Alg}$ that is one-half of an equivalence.

\begin{theo}  There is an equivalence of categories
\[ \mathbf{DFib}(\C) \simeq T\mathbf{Alg}
\]
for any small category $\C$.
\end{theo}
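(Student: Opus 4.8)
The plan is to exhibit the comparison functor $K\colon\mathbf{DFib}(\C)\to T\mathbf{Alg}$ explicitly, construct an inverse $L$ by hand, and check that the two composites are naturally isomorphic to the respective identities. An alternative that avoids building a category from scratch is to route the claim through Representation Theorem \ref{disc fib set-functor duality thm} by recognizing a $T$-algebra as precisely a presheaf; I will sketch both but lead with the direct construction.

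First I would make $K$ precise. On objects it sends a discrete fibration $F\colon\F\to\C$ to the pair consisting of the map $F_0\colon\F_0\to\C_0$ on objects together with the lifting action $M\colon\C_1\times_{\C_0}\F_0\to\F_0$, $(g,x)\mapsto g^\ast x$, described before the statement. The two algebra axioms are then verified: the unit law $M\circ\eta=\mathrm{id}$ amounts to the fact that the unique lift of an identity arrow is an identity, and the associativity law $M\circ TM=M\circ\mu$ amounts to the equation $g^\ast(h^\ast x)=(h\circ g)^\ast x$ (with $g,h$ composable in $\C$), which is immediate from the uniqueness clause in Definition \ref{discr fibn defn} since both sides are domains of lifts of the same composite. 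On morphisms, a map $H\colon\F\to\G$ over $\C$ is sent to $H_0\colon\F_0\to\G_0$; this is an algebra homomorphism because $H$ necessarily carries the unique lift of $g$ with a given codomain to the unique such lift in $\G$, so $H_0$ intertwines the two actions.

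Next I would construct $L\colon T\mathbf{Alg}\to\mathbf{DFib}(\C)$. Given an algebra $(f\colon X\to\C_0,\,a)$, I build a category $\F$ whose objects are the elements of $X$ and whose morphisms $w\to x$ are those arrows $g$ of $\C$ with $\dom g=f(w)$, $\cod g=f(x)$, and $a(g,x)=w$. Composition is inherited from $\C$; its well-definedness and associativity are exactly the associativity square $a\circ\mu=a\circ Ta$ read on a pair $(h,(g,x))$, which yields $a(h,a(g,x))=a(g\circ h,x)$, while identities come from the unit axiom. The projection $F\colon\F\to\C$ sending $x\mapsto f(x)$ and a morphism to its underlying arrow of $\C$ is then a discrete fibration: the unique lift of any $g\colon C\to f(x)$ is the morphism $a(g,x)\to x$. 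Functoriality of $L$ on homomorphisms is routine. With these constructions the composites are canonically isomorphic to the identities: $LK(\F)$ recovers $\F$ because in a discrete fibration a morphism is determined by its image in $\C$ together with its codomain, and $KL$ returns the original $(f,a)$ on the nose, both isomorphisms being evidently natural.

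The only place genuine care is needed, and hence the main obstacle, is the bookkeeping of conventions: which face map ($d_0$ versus $d_1$) records domain versus codomain, and the composition order built into $\mu$, so that the associativity square of the monad matches functoriality of cartesian lifting with the correct (contravariant) variance. Once these are fixed consistently the verifications are mechanical. Should one prefer to sidestep the hand-built category $\F$, I would instead observe that an object of $\mathbf{Set}/\C_0$ is a $\C_0$-indexed family of sets and that a $T$-action compatible with unit and multiplication is precisely a contravariant functorial assignment $g\mapsto g^\ast$, i.e.\ an object of $[\C^{op},\mathbf{Set}]$; this identifies $T\mathbf{Alg}\simeq[\C^{op},\mathbf{Set}]$, and composing with the equivalence of Theorem \ref{disc fib set-functor duality thm} gives $T\mathbf{Alg}\simeq\mathbf{DFib}(\C)$.
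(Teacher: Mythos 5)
Your proposal is correct and follows essentially the same route as the paper: the paper's proof likewise sends a discrete fibration to the lifting action $(g,x)\mapsto g^{*}x$ and constructs the pseudo-inverse by building from an algebra $(f\colon A\to\C_0,a)$ the category with $\F_0=A$ and $\F_1=TA=\C_1\times_{\C_0}A$, which is exactly your category of arrows $g$ with $a(g,x)=w$. Your secondary sketch, identifying $T\mathbf{Alg}\simeq[\C^{op},\mathbf{Set}]$ directly and composing with Theorem \ref{disc fib set-functor duality thm}, is also sound, though the paper does not take that route.
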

\begin{proof}  On the other hand, any $T$-algebra $f\colon A\to \C_0$ yields a discrete fibration $F\colon \F\to\C$ by taking $\F_0 = A$ and $\F_1 = TA = \C_1\times_{\C_0}A$.  This extends uniquely to morphisms and gives the pseudo-inverse required for the equivalence.  \end{proof}

The goal is to give an analogous result for discrete 2-fibrations.  To that end it will be helpful to recall the needed preliminaries on 2-monads and some results about ordinary fibrations.  In fact split fibrations have a similar monadicity result but in a ``boosted up" 2-categorical fashion.  The theory of 2-monads probably goes back to Street's \cite{StreetFormalMonads} since it gives a formal theory of such structures internal to any 2-category.  What is needed here for the most part is summarized in \S 3 of \cite{KS}.  Here the material is unpacked for the case $\K= \mathbf{2Cat}$.

\begin{define}[Omnibus Definition] \label{monad} A \textbf{2-monad} on a 2-category $\mathfrak K$ is a 2-functor $T\colon \mathfrak K\to \mathfrak K$ with 2-natural transformations $\eta\colon 1\Rightarrow T$ and $\mu\colon TT\Rightarrow T$ such that $\mu T\mu = \mu \mu T$ and $\mu T\eta =1 = \mu \eta T$ all hold, as usual.  An \textbf{algebra} for such a 2-monad is an object $A\in \K_0$ with a structure map $a\colon TA\to A$ satisfying the usual equations, namely, $a\mu_A = aTa$ and $a\eta_A=1$.  A \textbf{morphism of algebras} $(A,a)$ and $(B,b)$ is a morphism of the 2-category $h\colon A\to B$ that preserves the unit and preserves the action.  A \textbf{2-cell of morphisms of algebras} $h,k\colon A\rightrightarrows B$ is a 2-cell $\theta\colon h\Rightarrow k$ of the 2-category satisfying the compatibility condition $\theta\ast a = b\ast T\theta$.  With these definitions $T\mathbf{Alg}$ denotes the 2-category of $T$-algebras, their morphisms and 2-cells.  A 2-category is \textbf{monadic} over $\mathfrak K$ if it is equivalent to $T\mathbf{Alg}$ for some 2-monad $T$ on $\mathfrak K$.
\end{define}

\begin{example} Consider the 2-monad in the sense of Definition \ref{monad} on $\mathfrak{Cat}/\C$ given by sending a functor $H\colon \X\to\C$ to the pullback $d_1^*H$ as in
$$\begin{tikzpicture}
\node(1){$\C^{\mathbf 2}\times_{\C}\X$};
\node(2)[node distance=1in, right of=1]{$\X$};
\node(3)[node distance=.7in, below of=1]{$\C^{\mathbf 2}$};
\node(4)[node distance=.7in, below of=2]{$\C$};
\node(5)[node distance=.25in, right of=1]{$$};
\node(6)[node distance=.2in, below of=5]{$\lrcorner$};
\draw[->](1) to node [above]{$\pi_2$}(2);
\draw[->](1) to node [left]{$d_1^*H$}(3);
\draw[->](2) to node [right]{$H$}(4);
\draw[->](3) to node [below]{$d_1$}(4);
\end{tikzpicture}$$
composed with the domain functor $d_0\colon \C^{\mathbf 2}\to\C$.  Let $T$ denote this 2-monad.  Split fibrations over $\C$ are precisely the normalized T-algebras as in Definition \ref{monad} for $T$ as above in the sense that there is an equivalence of 2-categories
\[ \mathbf{Fib}(\C)\simeq T\mathbf{Alg}.
\]
If $F\colon \F\to\C$ is a split fibration, the action of $\C^{\mathbf 2}$ on $\F$ is given by 
\[ M\colon \C^{\mathbf 2}\times_{\C}\F \to\F \qquad  (f,X)\mapsto f^*X
\]
on objects and by the dashed arrow solution of the following lifting problem
$$\begin{tikzpicture}
\node(1){$B$};
\node(2)[node distance=1in, right of=1]{$C$};
\node(3)[node distance=.8in, below of=1]{$A$};
\node(4)[node distance=.8in, below of=2]{$D$};
\node(5)[node distance=.5in, right of=1]{$$};
\node(6)[node distance=.4in, below of=5]{$$};
\node(7)[node distance=.6in, right of=2]{$X$};
\node(8)[node distance=.8in, below of=7]{$Y$};
\node(9)[node distance=2in, right of=6]{$\mapsto$};
\node(10)[node distance=2.6in, right of=2]{$f^*X$};
\node(11)[node distance=1in, right of=10]{$X$};
\node(12)[node distance=.8in, below of=10]{$g^*Y$};
\node(13)[node distance=.8in, below of=11]{$Y$};
\draw[->](1) to node [above]{$f$}(2);
\draw[->](1) to node [left]{$h$}(3);
\draw[->](2) to node [right]{$k$}(4);
\draw[->](3) to node [below]{$g$}(4);
\draw[->](7) to node [right]{$u$}(8);
\draw[->](10) to node [above]{$\phi(f,X)$}(11);
\draw[->,dashed](10) to node [left]{$f^*u$}(12);
\draw[->](11) to node [right]{$u$}(13);
\draw[->](12) to node [below]{$\phi(f,Y)$}(13);
\end{tikzpicture}$$
on arrows.  Dually, split opfibrations over $\C$ are precisely the normalized 2-algebras for the 2-monad on $\mathfrak{Cat}/\C$ given by pulling back along $d_0\colon \C^{\mathbf 2}\to\C$ and then composing with $d_1\colon \C^{\mathbf 2}\to\C$.  The correspondence is discussed in \S I,3.5 of \cite{GrayFormalCats}.  A detailed account is in \cite{GrayFiberedCofibered}.  This result led to the definition of fibrations in a 2-category as certain algebras in \S 2 of \cite{StreetFibrations}.
\end{example}

\subsection{Discrete 2-Fibrations}

Now, the main result of the section will be given, namely, that discrete 2-fibrations over a base 2-category $\mathfrak B$ are precisely the algebras for a certain 2-monad on the 2-slice of $\mathbf{Cat}$ by the underlying 1-category $|\mathfrak B|$.  Recall for the following construction that $\mathbf{Sq}(\mathfrak B)$ denotes the 2-category of squares in $\mathfrak B$ from Example \ref{2cat of squares example}.  This was seen to be much like the cotensor with $\mathbf 2$ in the lax 3-category $\mathbf{Lax}$.

\begin{construction} \label{2monad construction}  Define an endo-2-functor 
\begin{equation} \label{2monad 2functor} T\colon \mathbf{Cat}/|\mathfrak B| \to \mathbf{Cat}/|\mathfrak B|
\end{equation}
on the ordinary 2-slice of $\mathbf{Cat}$ by $|\mathfrak B|$.  This is given by pulling back a functor $F\colon \C\to |\mathfrak B|$ along the target map $\mathrm{tgt}\colon |\mathbf{Sq}(\mathfrak B)| \to |\mathfrak B|$ and then composing the resulting projection with the source map $\mathrm{src}\colon |\mathbf{Sq}(\mathfrak B)| \to |\mathfrak B|$.  This makes $T$ a 2-functor in a canonical way since the object assignment is thus defined by pulling back.  There is a 2-natural transformation
\[  \mu\colon T^2\Rightarrow T
\]
given on the component $\mu_F\colon T^2F\to TF$ by horizontal composition of squares crossed with identity or projection from $\C$, as in
\[-\ast-\times 1 \colon |\mathbf{Sq}(\mathfrak B)|\times_{|\mathfrak B|} |\mathbf{Sq}(\mathfrak B)|\times_{|\mathfrak B|} \C \longrightarrow |\mathbf{Sq}(\mathfrak B)|\times_{|\mathfrak B|} \C.
\]
The 2-naturality arguments follow by composition laws.  Similarly, there is a unit 2-natural transformation $\eta\colon 1\Rightarrow T$ given by inserting an identity square for horizontal composition in $\mathbf{Sq}(\mathfrak B)$.
\end{construction}

\begin{remark}  The functor $T$ above could have been defined on the lax slice of $\cat$ over $|\mathfrak B|$ because the pullback $|\sq(\mathfrak B)|\times_{|\mathfrak B}\C$ is the underlying 1-category of the lax comma object $1/F$, which is universal with respect to lax natural, hence 2-natural, transformations.  However, this approach would give the wrong morphisms of algebras, since we have asked for a morphism of discrete 2-fibrations to commute strictly with the projections over the base.
\end{remark}

\begin{lemma}  The 2-functor of Construction \ref{2monad construction} together with $\mu$ and $\eta$ defines a 2-monad.
\end{lemma}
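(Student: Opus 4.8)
The plan is to recognize the data $(\src,\tgt,\ast,\eta)$ as equipping $|\sq(\mathfrak B)|$ with the structure of a category object internal to $\cat$, whose object-of-objects is $|\mathfrak B|$ and whose object-of-morphisms is $|\sq(\mathfrak B)|$. This is nothing but the underlying category object of the double category of quintets mentioned in Example \ref{2cat of squares example}: the functors $\src,\tgt\colon|\sq(\mathfrak B)|\rightrightarrows|\mathfrak B|$ are the source and target, $\eta$ selects identity squares, and horizontal pasting supplies a composition functor $\ast\colon|\sq(\mathfrak B)|\times_{|\mathfrak B|}|\sq(\mathfrak B)|\to|\sq(\mathfrak B)|$ over $|\mathfrak B|$. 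With this identification made, the result is an instance of the standard fact that an internal category in a category with pullbacks induces a monad on the slice over its object-of-objects via $F\mapsto \src\circ\tgt^{*}F$; the monad so produced is exactly $T$, with $\mu$ and $\eta$ its multiplication and unit. This is the direct analogue, one dimension up, of the monad $d_0\circ d_1^{*}$ built in \S\ref{classical case of monadicity} from the internal category $\C_1\rightrightarrows\C_0$ in $\mathbf{Set}$.

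In carrying this out, first I would record that $T$ is a 2-functor and that $\mu,\eta$ are 2-natural, both of which are already noted in Construction \ref{2monad construction}: the object-assignment is by a chosen strict pullback, so functoriality and 2-functoriality are forced by its universal property, and the 2-naturality of $\mu$ and $\eta$ follows from the composition laws in $\sq(\mathfrak B)$. It then remains to verify the three equations of Definition \ref{monad}. Evaluated at a component $F\colon\C\to|\mathfrak B|$, the triple $T^{3}F$ is computed by the iterated pullback whose objects are composable triples of squares $(h,g,f)$ together with an element $c\in\C_0$ lying suitably over the base. On such a triple both $\mu\circ\mu T$ and $\mu\circ T\mu$ return the full horizontal composite of $h$, $g$, and $f$, differing only in its bracketing, so they agree by associativity of $\ast$; likewise $\mu\circ T\eta$ and $\mu\circ\eta T$ paste an identity square on one side, returning the original square by the unit law for $\ast$. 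Thus all three monad equations reduce to the associativity and unit axioms of the internal category $(\src,\tgt,\ast,\eta)$.

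The point demanding care is strictness: the monad laws must hold as honest equalities of 2-natural transformations, not merely up to canonical isomorphism. This has two aspects. First, horizontal pasting in $\sq(\mathfrak B)$ must be strictly associative and strictly unital as a functor; this holds because $\mathfrak B$ is a strict 2-category and pasting of its 2-cells is strictly associative and unital, so the internal-category axioms hold on the nose. Second, the iterated pullbacks defining $T^{2}$ and $T^{3}$ must be chosen coherently, so that both $\mu\circ\mu T$ and $\mu\circ T\mu$ factor through the single triple pullback $|\sq(\mathfrak B)|\times_{|\mathfrak B|}|\sq(\mathfrak B)|\times_{|\mathfrak B|}|\sq(\mathfrak B)|\times_{|\mathfrak B|}\C$; with the canonical choice of strict pullbacks in $\cat$ this causes no difficulty. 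Finally one checks that the equations persist on the 2-cells of $\cat/|\mathfrak B|$, namely natural transformations over $|\mathfrak B|$, but this is automatic, since the action of $T$, $\mu$, and $\eta$ on such 2-cells is again induced by the universal property of the pullback and the required identities are thereby uniquely determined.
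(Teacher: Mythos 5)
Your proposal is correct and takes essentially the same route as the paper: the paper's proof likewise reduces the 2-monad equations to the associativity and unit laws for horizontal composition of squares in $\sq(\mathfrak B)$, checked on components of $\mu$ and $\eta$. Your internal-category packaging --- viewing $\src,\tgt\colon |\sq(\mathfrak B)|\rightrightarrows |\mathfrak B|$ with horizontal pasting as a category object in $\cat$ (the underlying category object of the quintet double category) and invoking the induced monad $\src\circ\tgt^{*}$ on the slice --- is a conceptual wrapper around that same componentwise verification, directly mirroring the dimension-1 monad $d_0\circ d_1^{*}$ that the paper itself builds from the internal category $\C_1\rightrightarrows\C_0$, and your added attention to strictness of pasting and coherent choices of iterated pullbacks is sound.
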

\begin{proof}  The 2-monad axioms are exactly the associativity and unit laws for horizontal composition of squares in $\mathbf{Sq}(\mathfrak B)$ since it suffices to check on components of $\mu$ and $\eta$. \end{proof}

Every discrete 2-fibration gives rise to a $T$ algebra in a canonical and 2-functorial way.

\begin{construction}  Let $P\colon \mathfrak E\to\mathfrak B$ denote a discrete 2-fibration as in Definition \ref{disc 2-fibration defn}.  Define an action functor
\begin{equation} \label{action}
M \colon |\mathbf{Sq}(\mathfrak B)|\times_{|\mathfrak B|} |\mathfrak E| \to |\mathfrak E|
\end{equation}
in the following way.  For an object $(f\colon B\to C,X)$ of the purported domain, by construction of the pullback, $X\in \mathfrak E_C$ holds.  Thus, take
\[ M(f,X):=f^*X
\]
that is, the domain of the chosen cartesian arrow $\phi(f,X)$ over $f$ coming with the splitting.  Now, a morphism of the purported domain is a pair $(\alpha, u)$, which can be displayed as
$$\begin{tikzpicture}
\node(1){$B$};
\node(2)[node distance=1in, right of=1]{$C$};
\node(3)[node distance=.8in, below of=1]{$A$};
\node(4)[node distance=.8in, below of=2]{$D$};
\node(5)[node distance=.5in, right of=1]{$$};
\node(6)[node distance=.4in, below of=5]{$\Downarrow\alpha$};
\node(7)[node distance=1in, right of=2]{$X$};
\node(8)[node distance=.8in, below of=7]{$Y$};
\draw[->](1) to node [above]{$f$}(2);
\draw[->](1) to node [left]{$h$}(3);
\draw[->](2) to node [right]{$k$}(4);
\draw[->](3) to node [below]{$g$}(4);
\draw[->](7) to node [right]{$u$}(8);
\end{tikzpicture}$$
where $Pu = k$, the target of the 2-cell in $\mathfrak B$.  The arrow assignment $M(\alpha, u)$ is then given as a unique lift of $h$ as the dashed arrow in the diagram:
$$\begin{tikzpicture}
\node(1){$f^*X$};
\node(2)[node distance=2in, right of=1]{$X$};
\node(3)[node distance=1in, below of=1]{$g^*Y$};
\node(4)[node distance=1in, below of=2]{$Y$};
\node(5)[node distance=1.3in, right of=1]{$$};
\node(6)[node distance=.4in, below of=5]{$\Downarrow \phi(\alpha,u\phi(f,X))$};
\draw[->](1) to node [above]{$\phi(f,X)$}(2);
\draw[->,dashed](1) to node [left]{$M(\alpha,u)$}(3);
\draw[->](2) to node [right]{$u$}(4);
\draw[->](3) to node [below]{$\phi(g,Y)$}(4);
\draw[->,bend right=20](1) to node [below]{$$}(4);
\end{tikzpicture}$$
The 2-cell $\phi(\alpha,u\phi(f,X))$ is the unique one over $\alpha$ with domain $u\phi(f,X)$.  Thus, its codomain is over $gh$, hence the unique lift giving $M(\alpha,u)$ exists.  That $M$ so defined is functorial follows by uniqueness of the lifted 2-cells since locally $P$ is a discrete opfibration.  One should note, however, that the equations describing the unit and composition laws for $M$ would hold even if $P$ were a 2-fibration as in Definition \ref{2fibn defn}, since the functoriality conditions are precisely the splitting equations for the opcartesian 2-cells from \S \ref{section splitting eqns}.
\end{construction}

\begin{prop}  For any discrete 2-fibration $P\colon \mathfrak E\to\mathfrak B$ as in Definition \ref{disc 2-fibration defn}, the underlying functor of 1-categories $|P|\colon |\mathfrak E|\to |\mathfrak B|$ is an $T$-algebra.  In particular an object assignment
\[ |-|\colon \mathbf{D2Fib}(\mathfrak B) \to T\mathbf{Alg}
\]
is well-defined. 
\end{prop}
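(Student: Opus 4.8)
The plan is to verify directly that the action functor $M$ of \eqref{action} equips $|P|\colon|\mathfrak E|\to|\mathfrak B|$ with the structure of a $T$-algebra in the sense of Definition \ref{monad}, where $T$ is the $2$-monad of Construction \ref{2monad construction}. First I would confirm that $M$ is a genuine morphism of the slice $\mathbf{Cat}/|\mathfrak B|$, i.e.\ that it commutes with the projections down to $|\mathfrak B|$. On an object $(f\colon B\to C,X)$ we have $M(f,X)=f^*X$, the domain of the chosen cartesian lift $\phi(f,X)$, so $|P|M(f,X)=\src(f)=B$; on a morphism $(\alpha,u)$ the value $M(\alpha,u)$ is by construction a lift of the source arrow $h$ of the square $\alpha$, so again $|P|M(\alpha,u)=\src(\alpha,u)$. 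Hence $M$ lives over $|\mathfrak B|$, as an algebra structure map must.

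Next I would establish the unit law $M\circ\eta_{|P|}=1_{|\mathfrak E|}$. The unit $\eta$ inserts an identity square, sending $X$ to $(1_{PX},X)$ and a morphism $u$ to the identity square paired with $u$; applying $M$ and invoking the identity splitting equations \eqref{splitting ident 1cells} and \eqref{splitting identi 2cells}, namely $\phi(1_{PX},X)=1_X$ and $\phi(1_{1_f},f)=1_f$, returns precisely $X$ and $u$. Thus $M\circ\eta_{|P|}$ is the identity on both objects and morphisms.

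The substance lies in the associativity law $M\circ\mu_{|P|}=M\circ TM$, which I would argue entirely from the splitting equations of \S\ref{section splitting eqns}. On an object $(g,(f,X))$ of $T^2|P|$, with $g\colon A\to B$ and $f\colon B\to PX$, the two composites give $(fg)^*X$ on the one hand (horizontally composing the squares first, via $\mu$) and $g^*(f^*X)$ on the other (acting with the inner square first, via $TM$); these coincide by the $1$-cell splitting equation \eqref{splitting 1cell}, which yields $(fg)^*X=g^*(f^*X)$. On morphisms the two composites produce two lifts of the horizontally composed $2$-cell sharing a common source arrow, and they agree because of the horizontal-composition splitting equation \eqref{splitting 2cell horiz} together with the fact that, $P$ being locally a discrete opfibration, such a lift is \emph{unique}. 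This uniqueness is exactly where discreteness enters and what makes the verification routine rather than delicate.

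I expect the main obstacle to be the bookkeeping of the morphism-level associativity: one must line up the opcartesian $2$-cell lifts $\phi(\alpha,-)$ produced by $\mu$ (after horizontal composition of squares) against those produced by the nested application $TM$, and check that both descend to the same dashed lift in $|\mathfrak E|$. Since both are lifts of the same base $2$-cell over $\src$ with identical source, the uniqueness clause for the local discrete opfibration forces their equality, so no independent coherence computation is needed. With both algebra axioms in hand, $(|P|,M)$ is a $T$-algebra, and assigning this algebra to each discrete $2$-fibration $P$ defines the object map $|-|\colon\mathbf{D2Fib}(\mathfrak B)\to T\mathbf{Alg}$, as claimed.
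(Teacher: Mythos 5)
Your proposal is correct and takes essentially the same route as the paper: the paper's (much terser) proof likewise reduces the algebra laws for the action $M$ to the splitting equations of \S\ref{section splitting eqns} together with the uniqueness of lifted 2-cells supplied by the local discrete opfibration condition. Your verification simply spells out the unit and associativity checks that the paper compresses into a remark, including its observation that the relevant equations would already hold for a general split 2-fibration.
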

\begin{proof}  It suffices to see that the functor in \ref{action} satisfies the algebra laws summarized in Definition \ref{monad}.  But as in the proof of functoriality in the construction of $M$, this results from the uniqueness of lifted 2-cells owing to the fact that $P$ is locally a discrete opfibration.  Again the equations describing this fact would hold even if $P$ were a general 2-fibration.  \end{proof}

The aim now is to prove the following main result, namely, that $|-|$ extends to a 2-functor with suitable pseudo-inverse making a 2-equivalence
\[ \mathbf{D2Fib}(\mathfrak B) \simeq T\mathbf{Alg}
\]
for any 2-category $\mathfrak B$ with $T$ as in Construction \ref{2monad construction}.  In the next subsection, it is shown how to construct an object assignment for the pseudo-inverse.  The arrow and 2-cell assignments are handled in the subsequent subsection.

\subsubsection{Essential Surjectivity on Objects}

This subsection shows that $|-|$ has a pseudo-inverse at the level of objects.  For this, it should first be noted that every $T$-algebra is in fact a split fibration and that $T$-algebra morphisms induce morphisms of split fibrations.

\begin{lemma} \label{action is a fibration} Let $Q\colon \A\to |\mathfrak B|$ denote a $T$-algebra with structure map
\[  M \colon |\mathbf{Sq}(\mathfrak B)|\times_{|\mathfrak B|} \A \to \A
\]
satisfying the appropriate equations.  It follows that $Q$ is a split fibration.
\end{lemma}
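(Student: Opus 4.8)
The plan is to extract a split cleavage for $Q$ directly from the algebra structure map $M$ and to verify the fibration axioms by comparison with Gray's correspondence between split fibrations over a base and algebras for the arrow-category $2$-monad, recalled above (see \cite{GrayFiberedCofibered}). First I would unwind the pullback: an object of $|\mathbf{Sq}(\mathfrak B)|\times_{|\mathfrak B|}\A$ is a pair $(f,X)$ with $f\colon B\to C$ an arrow of $\mathfrak B$ and $X\in\A$ lying over $\tgt(f)=C$, and since $M$ is a functor over $|\mathfrak B|$ the object $M(f,X)$ lies over $\src(f)=B$. I would then put $f^*X:=M(f,X)$ and take as candidate cleavage morphism $\phi(f,X)\colon f^*X\to X$ the image under $M$ of the canonical pullback-arrow $(f,X)\to(1_C,X)$ whose $\sq(\mathfrak B)$-component is the square with top $f$, bottom $1_C$ and identity coherence $2$-cell, and whose $\A$-component is $1_X$. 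The unit law of the algebra identifies $M(1_C,X)$ with $X$, so this is genuinely an arrow into $X$, and compatibility of $M$ with the projections down to $|\mathfrak B|$ forces $Q\phi(f,X)=f$.

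The cleanest route to cartesianness is then to restrict to commutative squares. Those squares of $\mathbf{Sq}(\mathfrak B)$ whose coherence $2$-cell is an identity, i.e. the strictly commutative squares of $\mathfrak B$, form a subcategory of $|\mathbf{Sq}(\mathfrak B)|$ isomorphic over $|\mathfrak B|$, via $\src$ and $\tgt$ (which on such squares are just the domain and codomain maps), to the arrow category $|\mathfrak B|^{\mathbf 2}$. This inclusion sends identity squares to identity squares and preserves horizontal composition, so it induces a morphism of $2$-monads from Gray's arrow-category monad on $\mathbf{Cat}/|\mathfrak B|$ to $T$. Restricting the given $T$-algebra along this monad morphism produces an algebra for Gray's monad having the same underlying functor $Q$, which is therefore a split fibration. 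Because the canonical cleavage arrows $(f,X)\to(1_C,X)$ used above already lie among the commutative squares, the cleavage and splitting obtained from Gray's theorem coincide with those read off directly from $M$.

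The reduction keeps the argument short: the only thing to verify is that this inclusion of commutative squares respects the unit and multiplication of the two monads, namely that horizontal composites and inserted unit squares of commutative squares are again commutative, both immediate from the definition of horizontal composition in $\mathbf{Sq}(\mathfrak B)$. Were one to argue directly instead, the main obstacle would be checking the cartesian lifting property of $\phi(f,X)$ by hand---producing the unique lift of a given factorization through $f$ from the action $M$ and the algebra laws---which is exactly the bookkeeping that the comparison with Gray's monad lets us avoid. In either approach the splitting equations \ref{splitting 1cell} and \ref{splitting ident 1cells} for $\phi$ come out as the associativity and unit laws $M\mu=M\,TM$ and $M\eta=1$ evaluated on the canonical squares. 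Finally I would emphasize that only the commutative part of $\mathbf{Sq}(\mathfrak B)$ enters here, which is precisely why the conclusion is merely a $1$-categorical split fibration; the remaining squares, carrying nontrivial $2$-cells, are what will supply the transition $2$-cells and the local discrete-opfibration structure in the subsequent development.
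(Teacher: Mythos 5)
Your proposal is correct, and it reaches the conclusion by a genuinely different mechanism than the paper. The paper's own proof constructs exactly the same cleavage you do --- $\phi(f,X):=M(=,1_X)\colon M(f,X)\to X$, where `$=$' is the commutative square with top $f$ and bottom $1_{QX}$ --- but then verifies the cartesian lifting property \emph{directly}, asserting that it follows from commutativity conditions in $\mathbf{Sq}(\mathfrak B)$ and that the splitting equations follow from the strict algebra laws; no appeal to Gray's correspondence is made inside the proof. You instead observe that the strictly commutative squares form a subcategory of $|\mathbf{Sq}(\mathfrak B)|$ isomorphic over $|\mathfrak B|$ to the arrow category $|\mathfrak B|^{\mathbf 2}$, that this inclusion respects the inserted unit squares and horizontal pasting and hence induces a morphism of 2-monads from Gray's arrow-category monad into $T$, and that restricting the given $T$-algebra along this morphism produces a normalized algebra for Gray's monad with the same underlying functor $Q$, which is a split fibration by the equivalence $\mathbf{Fib}(\C)\simeq T\mathbf{Alg}$ recalled in \S \ref{classical case of monadicity} and attributed to \cite{GrayFiberedCofibered}. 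Since the paper itself recalls that correspondence as known, your appeal to it is legitimate, and your check that the two cleavages agree (the canonical arrows $(f,X)\to(1_{QX},X)$ already lie among the commutative squares) closes the loop. What your route buys is conceptual economy and a clean factorization of the bookkeeping --- the lifting-property verification is outsourced to the cited 1-dimensional result, and only the trivial compatibility of the inclusion with units and multiplication must be checked; what the paper's direct route buys is self-containedness, at the cost of a terser verification. Your closing remark is also on point and matches the paper's later use of the lemma: only the commutative part of $\mathbf{Sq}(\mathfrak B)$ enters here, while the squares with nontrivial coherence 2-cells supply the transition 2-cells $\alpha^*=M(\alpha,1_X)$ and the local structure in Construction \ref{2functor construction}.
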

\begin{proof}  Take $X\in \A_0$ and a morphism $f\colon B\to QX$ of $\mathfrak B$.  The chosen cartesian arrow above $f$ is
\[ M(=, 1_X)\colon M(f,X)\to X
\]
where the `$=$' symbol denotes the square
$$\begin{tikzpicture}
\node(1){$B$};
\node(2)[node distance=1in, right of=1]{$C$};
\node(3)[node distance=.7in, below of=1]{$C$};
\node(4)[node distance=.7in, below of=2]{$C.$};
\node(5)[node distance=.5in, right of=1]{$$};
\node(6)[node distance=.35in, below of=5]{$=$};
\draw[->](1) to node [above]{$f$}(2);
\draw[->](1) to node [left]{$f$}(3);
\draw[->](2) to node [right]{$1$}(4);
\draw[->](3) to node [below]{$1$}(4);
\end{tikzpicture}$$
Use as notation $f^*X=M(f,X)$ and $\phi(f,X)=M(=,1_X)$.  The lifting property follows from commutativity conditions in $\mathbf{Sq}(\mathfrak B)$.  And that this choice of cartesian arrow above such $f$ provides a splitting for $Q$ follows from the strict algebra equations.  \end{proof}

\begin{lemma}  Any morphism $H\colon \A\to\B$ of $T$-algebras preserves is a morphism of split fibrations.
\end{lemma}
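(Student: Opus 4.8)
The plan is to observe that the cleavage exhibited in Lemma \ref{action is a fibration} is built entirely out of the action map, so that any morphism respecting the action automatically respects the cleavage. First I would recall that by that lemma the chosen cartesian arrow of a $T$-algebra $Q\colon \A \to |\mathfrak B|$ over a morphism $f\colon B \to QX$ of $\mathfrak B$ is $\phi(f,X) = M(=, 1_X)$, where $=$ denotes the displayed square with top and left legs $f$ and right and bottom legs identities, and $M$ is the algebra structure map. Thus the entire splitting — both the lifted object $f^*X = M(f,X)$ and the cartesian arrow $\phi(f,X)$ — is recorded by evaluating $M$ on such ``identity-type'' data.

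Next I would unwind the definition of a morphism $H\colon \A \to \B$ of $T$-algebras from Definition \ref{monad}. Writing $M_\A$ and $M_\B$ for the two structure maps, the condition is that $H$ commutes with the projections to $|\mathfrak B|$ (so that $H$ is a functor over the base) and that the strict equation $H\circ M_\A = M_\B \circ T(H)$ holds, where on the pullback $T(H)$ is the induced functor $1_{|\mathbf{Sq}(\mathfrak B)|} \times_{|\mathfrak B|} H$ from Construction \ref{2monad construction}.

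The key step is then to feed the specific datum defining the cartesian lift into this equation. Applying the algebra-morphism equation to the morphism $(=, 1_X)$ of the pullback $|\mathbf{Sq}(\mathfrak B)| \times_{|\mathfrak B|}\A$, I obtain
$$H(\phi_\A(f,X)) = H(M_\A(=, 1_X)) = M_\B(=, H(1_X)) = M_\B(=, 1_{HX}) = \phi_\B(f, HX),$$
and, reading off objects, $H(f^*X) = M_\B(f, HX) = f^*(HX)$. This says precisely that $H$ carries the chosen cartesian arrow over $f$ in $\A$ to the chosen cartesian arrow over $f$ in $\B$, i.e.\ that $H$ is splitting-preserving. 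Together with commutation over $|\mathfrak B|$, this is exactly the data of a morphism of split fibrations in the sense of $\mathbf{Fib}(|\mathfrak B|)$, completing the argument.

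I do not expect a genuine obstacle here: the only point requiring care is to confirm that $T(H)$ really is the induced map $1\times_{|\mathfrak B|}H$ on pullbacks and that $H\circ M_\A = M_\B\circ T(H)$ is a strict equality, so that preservation of chosen lifts holds on the nose rather than merely up to vertical isomorphism. Because both the monad structure and the algebra-morphism condition in Definition \ref{monad} are strict, this strictness is automatic, and the verification reduces to the single computation displayed above.
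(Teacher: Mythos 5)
Your proposal is correct and follows essentially the same route as the paper, whose one-line proof asserts exactly the equivalence you verify: strict preservation of the action (and unit) by a $T$-algebra morphism is precisely preservation of the cleavage constructed in Lemma \ref{action is a fibration}, since both $f^*X = M(f,X)$ and $\phi(f,X) = M(=,1_X)$ are values of the structure map. Your explicit computation $H(M_{\A}(=,1_X)) = M_{\B}(=,1_{HX}) = \phi_{\B}(f,HX)$, together with the observation that strictness of Definition \ref{monad} makes preservation hold on the nose, simply spells out the detail the paper leaves implicit.
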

\begin{proof} That $H$ is strictly action preserving and preserves units is equivalent to the statement that $H$ preserves the cartesian arrows as chosen in the previous result.  \end{proof}

\begin{construction}  \label{2functor construction} Let $Q\colon \A\to |\mathfrak B|$ denote a $T$-algebra with structure map
\begin{equation}
M \colon |\mathbf{Sq}(\mathfrak B)|\times_{|\mathfrak B|} \A \to \A
\end{equation}
satisfying the appropriate equations.  Define correspondences $F_Q\colon \mathfrak B^{op}\to\mathbf{Cat}$ in the following way.  On objects take 
\[ F_QB:= \A_B
\]
that is, the fiber of $Q$ over $B$.  Using the fact that $Q$ is a fibration (by Lemma \ref{action is a fibration}), functorial assignments between the fibers $f^*\colon \A_C\to\A_B$ can be given in the usual, way, namely on the one hand by
\[ f^*X:=M(=,1_X)
\]
on objects $X\in \A_C$ where `$=$' is the square in the proof of Lemma \ref{action is a fibration}; and on arrows by the dashed arrow
$$\begin{tikzpicture}
\node(1){$f^*X$};
\node(2)[node distance=1in, right of=1]{$X$};
\node(3)[node distance=.8in, below of=1]{$f^*X$};
\node(4)[node distance=.8in, below of=2]{$Y$};
\node(5)[node distance=.5in, right of=1]{$$};
\node(6)[node distance=.4in, below of=5]{$=$};
\draw[->](1) to node [above]{$\phi(f,X)$}(2);
\draw[->,dashed](1) to node [left]{$f^*u$}(3);
\draw[->](2) to node [right]{$u$}(4);
\draw[->](3) to node [below]{$\phi(f,Y)$}(4);
\end{tikzpicture}$$
arising as a lift of $1$ using the fact that $Q$ is a split fibration.  This is functorial by uniqueness.  For a 2-cell $\alpha\colon f\Rightarrow g$, define components of a transition transformation $\alpha^*\colon f^*\Rightarrow g^*$ by using the action of $M$, namely, the arrows
\[ \alpha^*_X:=M(\alpha,1_X)\colon f^*X\to g^*X
\]
in the category $\A_B$ indexed over $X\in \A_C$.  Naturality follows from the equality of arrows
$$\begin{tikzpicture}
\node(1){$B$};
\node(2)[node distance=1in, right of=1]{$C$};
\node(3)[node distance=.8in, below of=1]{$B$};
\node(4)[node distance=.8in, below of=2]{$C$};
\node(5)[node distance=.5in, right of=1]{$\Downarrow \alpha$};
\node(6)[node distance=.5in, below of=5]{$$};
\node(7)[node distance=.6in, right of=2]{$X$};
\node(8)[node distance=.8in, below of=7]{$Y$};
\node(9)[node distance=2in, right of=2]{$B$};
\node(10)[node distance=1in, right of=9]{$C$};
\node(11)[node distance=.8in, below of=9]{$B$};
\node(12)[node distance=.8in, below of=10]{$C$};
\node(13)[node distance=.5in, right of=9]{$$};
\node(14)[node distance=.8in, below of=13]{$\Downarrow \alpha$};
\node(15)[node distance=.6in, right of=10]{$X$};
\node(16)[node distance=.8in, below of=15]{$Y$};
\node(17)[node distance=1.8in, right of=6]{$=$};
\draw[->,bend left](1) to node [above]{$f$}(2);
\draw[->,bend right](1) to node [below]{$g$}(2);
\draw[->](1) to node [left]{$1$}(3);
\draw[->](2) to node [right]{$1$}(4);
\draw[->](3) to node [below]{$g$}(4);
\draw[->](7) to node [right]{$u$}(8);
\draw[->,bend left](11) to node [above]{$f$}(12);
\draw[->,bend right](11) to node [below]{$g$}(12);
\draw[->](9) to node [above]{$f$}(10);
\draw[->](9) to node [left]{$1$}(11);
\draw[->](10) to node [right]{$1$}(12);
\draw[->](15) to node [right]{$u$}(16);
\end{tikzpicture}$$
in the domain category $|\mathbf{Sq}(\mathfrak B)|\times_{|\mathfrak B|}\A$ using the definition of the components of $\alpha^*$ and the functors $f^*$ and $g^*$ in terms of the action functor $M$.  This $F_Q$ is a well-defined 2-functor.
\end{construction}

\begin{prop}  The assignments for $F_Q\colon \mathfrak B^{op}\to\mathbf{Cat}$ as in Construction \ref{2functor construction} make it a 2-functor.  In particular, this means an object assignment 
\[ \elt(F_{-})\colon T\mathbf{Alg} \to \mathbf{D2Fib}(\mathfrak B)
\]
is well-defined by $Q\mapsto \elt(F_Q)$.
\end{prop}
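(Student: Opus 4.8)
The plan is to check that $F_Q$ satisfies the three conditions of 2-functoriality --- functoriality on 1-cells, and preservation of vertical and horizontal composition of 2-cells, together with identities --- and then to observe that $\elt(F_Q)$ is automatically a discrete 2-fibration, so that the object assignment lands in $\mathbf{D2Fib}(\mathfrak B)$. The organizing principle is that the data defining $F_Q$ in Construction \ref{2functor construction} are formally the same as those defining the pseudo-inverse $F_P$ from a discrete 2-fibration, with the cleavage $\phi(-,-)$ everywhere replaced by the algebra action $M$. Consequently each coherence equation reduces to one of the splitting-type identities of \S\ref{section splitting eqns}, which now hold strictly by the $T$-algebra axioms of Definition \ref{monad}: since the multiplication $\mu$ is horizontal composition of squares in $\mathbf{Sq}(\mathfrak B)$ and the unit $\eta$ inserts identity squares, the laws $M\mu = M(TM)$ and $M\eta = 1$ are precisely the square-composition and square-identity equations transported through $M$.

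First I would dispose of functoriality on 1-cells. The identities $f^*g^* = (gf)^*$ and $1_B^* = 1_{\A_B}$ are exactly the splitting of the fibration $Q$ already exhibited in Lemma \ref{action is a fibration}: on objects they are the associativity and unit laws applied to the composable cartesian squares $M(=,1_{-})$, and on vertical arrows they follow from uniqueness of cartesian lifts. Preservation of identity 2-cells and of vertical composition is then immediate from the functoriality of $M$: a 2-cell $\alpha\colon f\Rightarrow g$ of $\mathfrak B$ is encoded as the square from $f$ to $g$ with identity source and target, and under this encoding the vertical composite $\beta\alpha$ in $\mathfrak B$ is the composite of the two identity-bordered squares in $|\mathbf{Sq}(\mathfrak B)|$, while $1_f$ is the identity square. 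Hence the pullback morphisms $(\alpha,1_X)$ and $(\beta,1_X)$ compose to $(\beta\alpha,1_X)$, and applying $M$ yields
\[
\beta^*_X\alpha^*_X = M(\beta,1_X)\,M(\alpha,1_X) = M(\beta\alpha,1_X) = (\beta\alpha)^*_X ,
\]
with $(1_f)^* = 1_{f^*}$ settled the same way.

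The main obstacle is preservation of horizontal composition, i.e.\ the interchange identity $(\beta\ast\alpha)^* = \alpha^*\ast\beta^*$ for $\alpha\colon f\Rightarrow g$ with $f,g\colon A\to B$ and $\beta\colon h\Rightarrow k$ with $h,k\colon B\to C$. Here the two sides are not literally the same composite in the pullback, so I would expand the Godement product at a component $X\in\A_C$ as $\alpha^*_{k^*X}\circ f^*(\beta^*_X)$ and rewrite each factor through the action, using the dictionary entries that $f^*$ acts on a vertical arrow $w$ by $f^* w = M(1_f,w)$ and that $\alpha^*_Y = M(\alpha,1_Y)$ (both justified by the fibration structure of Lemma \ref{action is a fibration} and uniqueness of lifts). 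Composing the resulting pullback morphisms $(1_f,\beta^*_X)$ and $(\alpha,1_{k^*X})$ gives $(\alpha,\beta^*_X)$, so that
\[
\alpha^*_{k^*X}\circ f^*(\beta^*_X) = M(\alpha,\beta^*_X) = M\bigl(\alpha,\,M(\beta,1_X)\bigr).
\]
Applying the associativity law $M(TM) = M\mu$ then replaces the iterated action by a single action along the horizontally composed square; and the horizontal composite in $\mathbf{Sq}(\mathfrak B)$ of the two identity-bordered squares for $\alpha$ and $\beta$ is exactly the identity-bordered square of the Godement 2-cell $\beta\ast\alpha$. Therefore $M(\alpha,M(\beta,1_X)) = M(\beta\ast\alpha,1_X) = (\beta\ast\alpha)^*_X$, as needed. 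The force of the step lies entirely in matching the monad multiplication $\mu$ with horizontal composition of squares; uniqueness of lifts (equivalently, that $Q$ is locally governed by the $T$-action) then guarantees the arrows of $\A$ agree on the nose.

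With $F_Q\colon \mathfrak B^{op}\to\mathbf{Cat}$ established as a genuine 2-functor, Construction \ref{2cat of elts discrete case} together with Proposition \ref{disc 2-fib lemma} shows that the projection $\Pi\colon\elt(F_Q)\to\mathfrak B$ is a discrete 2-fibration in the sense of Definition \ref{disc 2-fibration defn}. Hence $Q\mapsto\elt(F_Q)$ is a well-defined assignment $T\mathbf{Alg}\to\mathbf{D2Fib}(\mathfrak B)$ on objects, which is precisely the claim; the arrow and 2-cell assignments making this into a 2-functor are deferred to the next subsection.
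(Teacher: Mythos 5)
Your proposal is correct and takes essentially the same route as the paper's own proof: functoriality on 1-cells from the split-fibration structure established in Lemma \ref{action is a fibration}, vertical composition of 2-cells from functoriality of $M$ via $M(\beta,1_X)M(\alpha,1_X)=M(\beta\alpha,1_X)$, horizontal composition from the algebra associativity axiom with $\mu$ identified as horizontal pasting of squares, and units from functoriality and the algebra axioms. The only difference is one of detail: you explicitly verify the dictionary entries $f^*w=M(1_f,w)$ and $\alpha^*_Y=M(\alpha,1_Y)$ and carry out the pullback computation reducing the Godement component to $M(\beta\ast\alpha,1_X)$, where the paper compresses all of this into the single remark that horizontal composition ``is preserved by the algebra associativity axiom.''
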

\begin{proof}  That $F_Q$ is functorial at the level of 1-cells follows since $Q$ has already been seen to be a split fibration.  For vertically composable 2-cells $\alpha\colon f\Rightarrow g$ and $\beta \colon g\Rightarrow h$, that $\beta^*\alpha^* = (\beta\alpha)^*$ holds is the functoriality of $M$ at $X\in \A_C$, that is, the equation
\[ M(\beta,1_X)M(\alpha,1_X) = M(\beta\alpha,1_X),
\]
by definition of the components.  Similarly, horizontal composition of 2-cells is preserved by the algebra associativity axiom.  Preservation of units follow by functoriality and algebra axioms.  \end{proof}

\begin{construction} \label{functor for essen surject} Let $Q\colon \A\to |\mathfrak B|$ denote a $T$-algebra.  The functor $H$ will be one
$$\begin{tikzpicture}
\node(1){$|\elt(F_Q)|$};
\node(2)[node distance=.6in, right of=1]{$$};
\node(3)[node distance=.6in, right of=2]{$\A$};
\node(4)[node distance=.6in, below of=2]{$|\mathfrak B|$};
\draw[->](1) to node [above]{$H$}(3);
\draw[->](1) to node [left]{$\Pi\;\;$}(4);
\draw[->](3) to node [right]{$\;\;Q$}(4);
\end{tikzpicture}$$
with $PH=\Pi$ that respects the actions of $|\mathbf{Sq}(\mathfrak B)|$.  An object of $|\elt(F_Q)|$ is a pair $(B,X)$ with $X\in \A_B$.  So, on objects, take
\[ H(B,X):=X
\]
An arrow is one $(f,u)\colon (B,X)\to (C,Y)$ with $f\colon B\to C$ in $\mathfrak B$ and $u\colon X\to f^*Y$.  Thus, take $H(f,u)$ to be the composite
$$\begin{tikzpicture}
\node(1){$X$};
\node(2)[node distance=.8in, right of=1]{$f^*Y$};
\node(3)[node distance=1in, right of=2]{$Y$};
\draw[->](1) to node [above]{$u$}(2);
\draw[->](2) to node [above]{$\phi(f,Y)$}(3);
\end{tikzpicture}$$
This is a functor by uniqueness properties and splitting equations satisfied by the chosen cartesian arrows $\phi(f,X)$ and the induced lifts.  It needs to be seen that $H$ is an isomorphism of algebras.  That $H$ is a bijection on objects is immediate; and that $H$ is fully faithful is by the existence and uniqueness properties of lifts via chosen cartesian arrows.  It is left to see that $H$ is action-preserving.  The following lemma gives a key computation used in the proof%.
\end{construction}

\begin{lemma}[Technical Preliminary] \label{technical lemma for iso} Let $Q\colon \A\to |\mathfrak B|$ denote a $T$-algebra.  Consider $|\elt(F_Q)|$ as in Construction \ref{functor for essen surject}.  Given a composable arrows $f\colon B\to C$ and $k\colon C\to D$ and a vertical arrow $u\colon X\to k^*Y$ of $\A_C$, the image arrow $f^*u$ under the transition functor as in Construction \ref{2functor construction}, is precisely $f^*u = M(=, \phi(k,Y)u)$ where the `$=$' here denotes the square
$$\begin{tikzpicture}
\node(1){$B$};
\node(2)[node distance=1in, right of=1]{$C$};
\node(3)[node distance=.7in, below of=1]{$B$};
\node(4)[node distance=.7in, below of=2]{$D$};
\node(5)[node distance=.5in, right of=1]{$$};
\node(6)[node distance=.35in, below of=5]{$=$};
\draw[->](1) to node [above]{$f$}(2);
\draw[->](1) to node [left]{$1$}(3);
\draw[->](2) to node [right]{$k$}(4);
\draw[->](3) to node [below]{$kf$}(4);
\end{tikzpicture}$$
and $\phi(k,Y)$ is the chosen cartesian arrow above $k$ from the proof of Lemma \ref{action is a fibration}.
\end{lemma}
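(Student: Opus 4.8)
The plan is to verify that the right-hand side $M(=,\phi(k,Y)u)$ satisfies the property that \emph{uniquely characterizes} the transition arrow $f^{*}u$, and then to conclude by the uniqueness clause in the cartesian lifting property. Recall from Construction \ref{2functor construction} that $f^{*}u$ is defined as the unique $Q$-vertical arrow $f^{*}X\to f^{*}(k^{*}Y)$ whose composite with the chosen cartesian arrow $\phi(f,k^{*}Y)$ equals $u\,\phi(f,X)$. First I would identify the (co)domains of the two arrows under comparison: by the splitting equation \ref{splitting 1cell} applied to the composite $kf$, one has $f^{*}(k^{*}Y)=(kf)^{*}Y$ together with the factorization $\phi(kf,Y)=\phi(k,Y)\,\phi(f,k^{*}Y)$, so both $f^{*}u$ and $M(=,\phi(k,Y)u)$ are arrows with domain $f^{*}X$ and codomain $(kf)^{*}Y$. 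Since $M$ is an algebra structure map over $|\mathfrak B|$, i.e. $Q\circ M=\src\circ\pi_{1}$, reading off the relevant leg of the square $=$ shows that $M(=,\phi(k,Y)u)$ is $Q$-vertical, hence is a legitimate candidate for the unique lift.

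Next I would establish the defining equation $\phi(f,k^{*}Y)\circ M(=,\phi(k,Y)u)=u\,\phi(f,X)$. The key tool is that, by Lemma \ref{action is a fibration}, the action $M$ itself furnishes the cleavage: each chosen cartesian arrow is of the form $\phi(s,Z)=M(\text{=},1_{Z})$ for the appropriate commutative square. Rewriting $\phi(f,k^{*}Y)$ (and, where convenient, $\phi(k,Y)$) in this way and then invoking the functoriality of $M$, the composite of two $M$-values becomes $M$ of a single composite morphism in $|\mathbf{Sq}(\mathfrak B)|$. That composite square should simplify — via the horizontal composition law for squares in $\mathbf{Sq}(\mathfrak B)$ together with the strict monad/algebra equations of Definition \ref{monad} — to the cartesian square whose $M$-image is exactly $u\,\phi(f,X)$. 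An essentially equivalent route is to postcompose both arrows with $\phi(kf,Y)$, compute the left-hand composite as $\phi(k,Y)\,u\,\phi(f,X)$ using \ref{splitting 1cell} and the defining square of $f^{*}u$, and compute the right-hand composite by the same functoriality-of-$M$ bookkeeping; the cartesianness of $\phi(kf,Y)$ over $kf$ then yields equality of the lifts.

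With the defining equation in hand, the conclusion is immediate: $\phi(f,k^{*}Y)$ is cartesian over $f$, and $M(=,\phi(k,Y)u)$ is a $Q$-vertical arrow satisfying the same factorization equation as $f^{*}u$, so the uniqueness of cartesian lifts forces $M(=,\phi(k,Y)u)=f^{*}u$. The hard part will be the middle step: correctly matching the composite of squares in $|\mathbf{Sq}(\mathfrak B)|$ against the splitting equation \ref{splitting 1cell}, which requires keeping careful track of the source/target conventions for $\mathbf{Sq}(\mathfrak B)$ (the ``paradoxical'' orientation flagged in Construction \ref{lax comma cat construction}) and of which leg of each square the action reads off. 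Once the square-algebra is organized so that horizontal composition of the two commutative squares reproduces precisely the cleavage for $kf$, everything else is a formal application of functoriality and the strictness of the algebra.
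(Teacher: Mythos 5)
Your proposal is correct and, in the ``essentially equivalent route'' you sketch, is exactly the paper's proof: the paper post-composes with $\phi(kf,Y)$, writes both ways around the resulting commutative square as $M$-images of one and the same arrow of $|\mathbf{Sq}(\mathfrak B)|\times_{|\mathfrak B|}\A$ (using $\phi(f,X)=M(\mathord{=},1_X)$, the algebra unit law in the form $\phi(k,Y)u = M(\mathrm{unit\ square},\phi(k,Y)u)$, and functoriality of $M$), and then concludes from verticality of both candidates and uniqueness of lifts through the cartesian arrow $\phi(kf,Y)$. One small caution about your first route: the composite $\phi(f,k^{*}Y)\circ M(\mathord{=},\phi(k,Y)u)$ is \emph{not} literally $M$ of a composite in $|\mathbf{Sq}(\mathfrak B)|\times_{|\mathfrak B|}\A$, because the two arrows being composed are $M$-images of arrows with mismatched (co)domain objects, namely $(kf,Y)$ versus $(f,k^{*}Y)$, which coincide in $\A$ only after applying $M$ --- which is precisely why the post-composition variant is the one to execute.
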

\begin{proof}  By its construction, $f^*u$ fits into a commutative square
$$\begin{tikzpicture}
\node(1){$f^*X$};
\node(2)[node distance=1.2in, right of=1]{$X$};
\node(3)[node distance=.8in, below of=1]{$(kf)^*X$};
\node(4)[node distance=.8in, below of=2]{$Y$};
\node(5)[node distance=.6in, right of=1]{$$};
\node(6)[node distance=.4in, below of=5]{$=$};
\draw[->](1) to node [above]{$\phi(f,X)$}(2);
\draw[->](1) to node [left]{$f^*u$}(3);
\draw[->](2) to node [right]{$\phi(k,Y)u$}(4);
\draw[->](3) to node [below]{$\phi(kf,Y)$}(4);
\end{tikzpicture}$$
by composing the square from Construction \ref{2functor construction} in the definition of $f^*u$ with the morphism $\phi(k,Y)$ and using the splitting equation in \S \ref{splitting 1cell}.  Now, the clockwise way around the square above is given by $M$ applied to the arrow
$$\begin{tikzpicture}
\node(1){$B$};
\node(2)[node distance=1in, right of=1]{$C$};
\node(3)[node distance=.7in, below of=1]{$C$};
\node(4)[node distance=.7in, below of=2]{$C$};
\node(5)[node distance=.5in, right of=1]{$$};
\node(6)[node distance=.35in, below of=5]{$=$};
\node(7)[node distance=.7in, below of=3]{$D$};
\node(8)[node distance=.7in, below of=4]{$D$};
\node(9)[node distance=.7in, below of=6]{$=$};
%\node(10)[node distance=.7in, below of=8]{$D$};
\node(11)[node distance=1in, right of=2]{$X$};
\node(12)[node distance=.7in, below of=11]{$X$};
\node(13)[node distance=.7in, below of=12]{$k^*Y$};
\draw[->](1) to node [above]{$f$}(2);
\draw[->](1) to node [left]{$f$}(3);
\draw[->](2) to node [right]{$1$}(4);
\draw[->](3) to node [below]{$1$}(4);
\draw[->](3) to node [left]{$k$}(7);
\draw[->](4) to node [right]{$k$}(8);
\draw[->](7) to node [below]{$1$}(8);
\draw[->](11) to node [right]{$1$}(12);
\draw[->](12) to node [right]{$\phi(k,Y)u$}(13);
\end{tikzpicture}$$
of $|\mathbf{Sq}(\mathfrak B)|\times_{|\mathfrak B|}|\elt(F_Q)|$.  (That is, since $u= M(1_{1_C},u)$ holds and $M$ is a functor).  On the other hand, the counter-clockwise way around the same square is $M$ applied to the arrow
$$\begin{tikzpicture}
\node(1){$B$};
\node(2)[node distance=1in, right of=1]{$C$};
\node(3)[node distance=.7in, below of=1]{$B$};
\node(4)[node distance=.7in, below of=2]{$D$};
\node(5)[node distance=.5in, right of=1]{$$};
\node(6)[node distance=.35in, below of=5]{$=$};
\node(7)[node distance=.7in, below of=3]{$D$};
\node(8)[node distance=.7in, below of=4]{$D$};
\node(9)[node distance=.7in, below of=6]{$=$};
%\node(10)[node distance=.7in, below of=8]{$D$};
\node(11)[node distance=1in, right of=2]{$X$};
\node(12)[node distance=.7in, below of=11]{$Y$};
\node(13)[node distance=.7in, below of=12]{$Y$};
\draw[->](1) to node [above]{$f$}(2);
\draw[->](1) to node [left]{$1$}(3);
\draw[->](2) to node [right]{$k$}(4);
\draw[->](3) to node [below]{$kf$}(4);
\draw[->](3) to node [left]{$kf$}(7);
\draw[->](4) to node [right]{$1$}(8);
\draw[->](7) to node [below]{$1$}(8);
\draw[->](11) to node [right]{$\phi(k,Y)u$}(12);
\draw[->](12) to node [right]{$1$}(13);
\end{tikzpicture}$$
of $|\mathbf{Sq}(\mathfrak B)|\times_{|\mathfrak B|}|\elt(F_Q)|$.  Of course the two arrows upon which $M$ acts in the last two displays are equal.  Hence by uniquness and functoriality of $M$ the desired equation $f^*u=M(=, \phi(k,Y)u)$ does hold.  \end{proof}

\begin{prop} \label{algebras obj assign essen surj} The object assignment
\[ |-|\colon \mathbf{D2Fib} \to T\mathbf{Alg}
\]
is essentially surjective to within isomorphism.
\end{prop}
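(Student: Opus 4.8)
The plan is to show that the functor $H\colon |\elt(F_Q)|\to\A$ constructed in Construction \ref{functor for essen surject} is an isomorphism of $T$-algebras, for any given $T$-algebra $Q\colon \A\to|\mathfrak B|$. It was already observed in that construction that $H$ is a bijection on objects, is fully faithful, and satisfies $QH=\Pi$, so that $H$ is an isomorphism of the underlying categories over $|\mathfrak B|$. What remains is therefore to verify that $H$ is action-preserving in the sense of Definition \ref{monad}; combined with invertibility of the underlying functor, this exhibits $H$ as an isomorphism in $T\mathbf{Alg}$. Since $\elt(F_Q)$ is a discrete 2-fibration and $|\elt(F_Q)|=|-|(\elt(F_Q))$, it then follows that every $T$-algebra $Q$ is isomorphic to one in the image of $|-|$, which is exactly essential surjectivity to within isomorphism.

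To verify action-preservation I would compare the two composites around the square asserting $H\circ M'=M\circ(1\times H)$, where $M$ is the given action on $\A$ and $M'$ is the action on $|\elt(F_Q)|$ arising from its canonical structure as a discrete 2-fibration (its own chosen cartesian arrows, via equation \ref{action} applied to $\elt(F_Q)$). On objects the agreement is immediate: an object $(f,(C,Y))$ of the pullback is sent by $M'$ to $(B,f^*Y)$ with $f^*Y=M(=,1_Y)$ computed through $F_Q$ exactly as in Lemma \ref{action is a fibration}, so that $H(B,f^*Y)=f^*Y=M(f,Y)=M(f,H(C,Y))$, and the two composites coincide.

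The substantive step is agreement on morphisms, and this is precisely what Lemma \ref{technical lemma for iso} is designed to supply. A morphism of $|\mathbf{Sq}(\mathfrak B)|\times_{|\mathfrak B|}|\elt(F_Q)|$ consists of a square $\alpha$ of $\mathbf{Sq}(\mathfrak B)$ together with a compatible arrow $(k,u)$ of $\elt(F_Q)$; the action $M'$ produces its value by the cartesian lifting properties in $\elt(F_Q)$, and applying $H$ then unwinds the transition-functor arrow assignment $f^*u$ appearing there. By Lemma \ref{technical lemma for iso} this arrow equals $M(=,\phi(k,Y)u)$, and chasing the definition of $H(f,u)$ as the composite $\phi(f,Y)\circ u$, together with the splitting equations of \S\ref{section splitting eqns} and the functoriality of $M$, both routes around the square collapse to $M$ applied to one and the same arrow of the pullback category. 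Hence $H\circ M'=M\circ(1\times H)$, so $H$ is action-preserving.

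I expect the main obstacle to be the bookkeeping in this last comparison: one must correctly identify the arrow produced by the discrete-opfibration lifts defining $M'$ in $\elt(F_Q)$ with the output of the algebra action $M$ after translation through $H$. The Technical Lemma isolates the one genuinely nontrivial identity, so beyond invoking it the remaining verification is a matching of composites governed by the splitting equations \ref{splitting 1cell}--\ref{splitting identi 2cells} and the uniqueness of lifts coming from $\Pi$ being locally a discrete opfibration. Once action-preservation is in hand, $H$ is an isomorphism of $T$-algebras, whence $Q\cong|-|(\elt(F_Q))$ and the object assignment $|-|$ is essentially surjective to within isomorphism.
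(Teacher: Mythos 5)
Your proposal is correct and follows essentially the same route as the paper: the paper's proof likewise reduces essential surjectivity to showing that the functor $H$ of Construction \ref{functor for essen surject} (already known there to be bijective on objects and fully faithful over $|\mathfrak B|$) respects the action, verifying this on objects by definition and on arrows by exhibiting both composites as images under $M$ of one and the same arrow of $|\mathbf{Sq}(\mathfrak B)|\times_{|\mathfrak B|}\A$, with Lemma \ref{technical lemma for iso} supplying the computation of $f^*u$ and the splitting equations plus functoriality of $M$ handling the rest. The only detail you elide, as the paper also notes briefly, is the (easier) check of the unit law.
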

\begin{proof}  It has now to be seen that $H$ is a morphism of the 2-category, that is, a homomorphism of $T$-algebras.  To show that $H$ commutes with the action, first note that this follows on objects simply by definition the $f^*X:=M(f,X)$ and the construction of $H$.  It must be checked on arrows, however.  Thus, start with
$$\begin{tikzpicture}
\node(1){$B$};
\node(2)[node distance=1in, right of=1]{$C$};
\node(3)[node distance=.8in, below of=1]{$A$};
\node(4)[node distance=.8in, below of=2]{$D$};
\node(5)[node distance=.5in, right of=1]{$$};
\node(6)[node distance=.4in, below of=5]{$\Downarrow\alpha$};
\node(7)[node distance=1in, right of=2]{$(B,X)$};
\node(8)[node distance=.8in, below of=7]{$(D,Y)$};
\draw[->](1) to node [above]{$f$}(2);
\draw[->](1) to node [left]{$h$}(3);
\draw[->](2) to node [right]{$k$}(4);
\draw[->](3) to node [below]{$g$}(4);
\draw[->](7) to node [right]{$(k,u)$}(8);
\end{tikzpicture}$$
of $|\mathbf{Sq}(\mathfrak B)|\times_{|\mathfrak B|}|\elt(F_Q)|$ with $u\colon X\to k^*Y$.  Chase this around each side of the preservation square.  On the one hand, $H$ following the action on $|\elt(F_Q)|$ yields the arrow
$$\begin{tikzpicture}
\node(1){$f^*X$};
\node(2)[node distance=1.2in, right of=1]{$h^*g^*Y$};
\node(3)[node distance=1.2in, right of=2]{$g^*Y.$};
\draw[->](1) to node [above]{$\alpha^*_Yf^*u$}(2);
\draw[->](2) to node [above]{$\phi(h,g^*Y)$}(3);
\end{tikzpicture}$$
On the other hand, the action on $\A$ following $1\times H$ yields the arrow
$$\begin{tikzpicture}
\node(1){$f^*X$};
\node(2)[node distance=1.6in, right of=1]{$g^*Y$};
\draw[->](1) to node [above]{$M(\alpha,\phi(k,Y)u).$}(2);
\end{tikzpicture}$$
Of course the claim is that these arrows are equal in $\A$.  Indeed each is the image under $M$ of the same arrow of $|\mathbf{Sq}(\mathfrak B)|\times_{|\mathfrak B|}\A$.  The first displayed arrow of $\A$ is the image under $M$ of
$$\begin{tikzpicture}
\node(1){$B$};
\node(2)[node distance=1in, right of=1]{$C$};
\node(3)[node distance=.7in, below of=1]{$B$};
\node(4)[node distance=.7in, below of=2]{$D$};
\node(5)[node distance=.5in, right of=1]{$$};
\node(6)[node distance=.35in, below of=5]{$=$};
\node(7)[node distance=.7in, below of=3]{$B$};
\node(8)[node distance=.7in, below of=4]{$D$};
\node(9)[node distance=.7in, below of=7]{$A$};
\node(10)[node distance=.7in, below of=8]{$D$};
\node(11)[node distance=1in, right of=2]{$X$};
\node(12)[node distance=.7in, below of=11]{$Y$};
\node(13)[node distance=.7in, below of=12]{$Y$};
\node(14)[node distance=.7in, below of=13]{$Y$};
\node(15)[node distance=.7in, below of=6]{$\Downarrow\alpha$};
\node(16)[node distance=.7in, below of=15]{$=$};
\draw[->](1) to node [above]{$f$}(2);
\draw[->](1) to node [left]{$1$}(3);
\draw[->](2) to node [right]{$k$}(4);
\draw[->](3) to node [below]{$kf$}(4);
\draw[->](3) to node [left]{$1$}(7);
\draw[->](4) to node [right]{$1$}(8);
\draw[->](7) to node [below]{$gh$}(8);
\draw[->](11) to node [right]{$\phi(k,Y)u$}(12);
\draw[->](12) to node [right]{$1$}(13);
\draw[->](7) to node [left]{$h$}(9);
\draw[->](8) to node [right]{$1$}(10);
\draw[->](9) to node [below]{$g$}(10);
\draw[->](13) to node [right]{$1$}(14);
\end{tikzpicture}$$
by the computation of $f^*u$ in Lemma \ref{technical lemma for iso}, the further equation $\alpha^*_Y=M(\alpha, 1_Y)$ from Construction \ref{2functor construction}, and the fact that $M$ is a functor.  On the other hand, the second arrow of $\A$ is the image under $M$ of the arrow 
$$\begin{tikzpicture}
\node(1){$B$};
\node(2)[node distance=1in, right of=1]{$C$};
\node(3)[node distance=.7in, below of=1]{$B$};
\node(4)[node distance=.7in, below of=2]{$C$};
\node(5)[node distance=.5in, right of=1]{$$};
\node(6)[node distance=.4in, below of=5]{$=$};
\node(7)[node distance=.7in, below of=3]{$A$};
\node(8)[node distance=.7in, below of=4]{$D$};
\node(9)[node distance=.7in, below of=6]{$\Downarrow\alpha$};
%\node(10)[node distance=.7in, below of=8]{$D$};
\node(11)[node distance=1in, right of=2]{$X$};
\node(12)[node distance=.7in, below of=11]{$k^*Y$};
\node(13)[node distance=.7in, below of=12]{$Y$};
\draw[->](1) to node [above]{$f$}(2);
\draw[->](1) to node [left]{$1$}(3);
\draw[->](2) to node [right]{$1$}(4);
\draw[->](3) to node [below]{$f$}(4);
\draw[->](3) to node [left]{$h$}(7);
\draw[->](4) to node [right]{$k$}(8);
\draw[->](7) to node [below]{$g$}(8);
\draw[->](11) to node [right]{$u$}(12);
\draw[->](12) to node [right]{$\phi(k,Y)$}(13);
\end{tikzpicture}$$
since $u= M(1_{1_C},u)$ holds by the identity law for the algebra.  But the arrows of $|\mathbf{Sq}(\mathfrak B)|\times_{|\mathfrak B|}\A$ in the last two displays are evidently equal by composition laws.  Since $M$ is functor, the images are equal.  Hence $H$ respects the action.  The proof for the identity law is similar but easier.\end{proof}

\subsubsection{Remaining Assignments}

Additionally, the object assignments from the previous subsection extend to well-defined 2-functors 
\[ |-|\colon \mathbf{D2Fib}(\mathfrak B) \rightleftarrows T\mathbf{Alg}\colon |\elt(F_{-})|
\]
which turn out to give an equivalence of 2-categories.  Given a morphism $F$ of discrete 2-fibrations $P\colon \mathfrak E\to\mathfrak B$ and $Q\colon \mathfrak G\to\mathfrak B$, the underying functor $|F|$ is a $T$-algebra homomorphism since it is supposed to preserve the splitting and since both $P$ and $Q$ are locally discrete opfibrations.  Each vertical 2-natural transformation $\alpha\colon F\Rightarrow G$ of such morphisms of discrete 2-fibrations clearly induces an ordinary vertical natural transformation of underlying functors $|\alpha|\colon |F|\Rightarrow |G|$.  Thus, $|-|$ is well-defined and functorial.

\begin{prop} \label{algebra isomorphism remaining assignments}  For any discrete 2-fibrations, $P\colon \mathfrak E\to\mathfrak B$ and $Q\colon \mathfrak G\to\mathfrak B$, the underlying functor
\[ |-|\colon \mathbf{D2Fib}(P,Q) \to T\mathbf{Alg}(|P|,|Q|)
\]
is an isomorphism of 1-categories and is natural in $P$ and $Q$.
\end{prop}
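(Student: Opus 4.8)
The plan is to exhibit an explicit inverse to $|-|$ at the level of hom-categories, leveraging throughout the single structural fact that $\mathfrak E$ and $\mathfrak G$ are locally discrete opfibrations: any $2$-cell in a discrete $2$-fibration is the unique $2$-cell lying above its image in the base with a prescribed source. Since the text preceding the statement already records that $|-|$ is a well-defined functor, what remains is to check that it is bijective on objects (morphisms of discrete $2$-fibrations) and bijective on arrows (vertical $2$-natural transformations), and that these bijections are compatible with whiskering. I would treat objects first, then arrows, then naturality.

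\textbf{Bijectivity on objects.} For injectivity, suppose $F,F'\colon \mathfrak E\to\mathfrak G$ are splitting-preserving $2$-functors over $\mathfrak B$ with $|F|=|F'|$. Then $F$ and $F'$ agree on objects and $1$-cells. For a $2$-cell $\theta\colon u\Rightarrow v$ of $\mathfrak E$, both $F\theta$ and $F'\theta$ lie over $Q(F\theta)=P\theta=Q(F'\theta)$ and share the source $Fu=F'u$; since $\mathfrak G$ is locally a discrete opfibration they coincide, whence $F=F'$. For surjectivity, let $h\colon |\mathfrak E|\to|\mathfrak G|$ be a $T$-algebra homomorphism over $|\mathfrak B|$. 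Define $F$ to agree with $h$ on objects and $1$-cells, and on a $2$-cell $\theta\colon u\Rightarrow v$ let $F\theta$ be the unique $2$-cell of $\mathfrak G$ above $P\theta$ with source $hu$. The target of $F\theta$ equals $hv$ precisely because $h$ preserves the action $M$, which by its construction encodes the opcartesian $2$-cell lifts; this is the one place the homomorphism condition is genuinely used. That $F$ so defined is a splitting-preserving $2$-functor over $\mathfrak B$ is then forced by uniqueness of lifts together with the splitting equations of \S \ref{section splitting eqns}, and by construction $|F|=h$.

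\textbf{Bijectivity on arrows.} A vertical $2$-natural transformation $\alpha\colon F\Rightarrow G$ has components $\alpha_X$ lying in the fibers, and $|\alpha|$ records exactly this family as a natural transformation of underlying functors, so $|-|$ is injective on arrows. For surjectivity I would start from a $T$-algebra $2$-cell $\sigma\colon |F|\Rightarrow|G|$, that is, a $|\mathfrak B|$-vertical natural transformation satisfying the compatibility of Definition \ref{monad}, and promote it to a vertical $2$-natural transformation. Ordinary $1$-cell naturality over all arrows of $\mathfrak E$ is already part of $\sigma$ being a natural transformation, and verticality of the components is inherited from $\sigma$ being a $2$-cell in the slice. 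The remaining $2$-cell naturality condition $\alpha_Y\ast F\theta=G\theta\ast\alpha_X$ is automatic: both composites project under $Q$ to $P\theta$ and share the source $\alpha_Y\cdot Ff=Gf\cdot\alpha_X$ supplied by $1$-cell naturality, so they agree by local discreteness of $\mathfrak G$. It then remains to match the algebra compatibility with ordinary naturality: unwinding the condition on components shows that, for homomorphisms arising from discrete $2$-fibrations, it is equivalent to naturality of $\sigma$ against the cartesian transition maps, which is subsumed by $\sigma$ being a natural transformation of the underlying functors in the sense of Construction \ref{2functor construction}.

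\textbf{Naturality and the main obstacle.} Naturality of $|-|$ in $P$ and $Q$ is immediate, since $|-|$ is defined by applying the underlying-$1$-category functor uniformly and therefore commutes strictly with pre- and post-whiskering: for morphisms of discrete $2$-fibrations $s$ and $t$ one has $|t\circ(-)\circ s|=|t|\circ|{-}|\circ|s|$, which is precisely the required naturality square. I expect the main obstacle to be the surjectivity-on-objects step, specifically verifying that the $2$-cell assignment of the reconstructed $F$ is well defined---that action-preservation by $h$ forces the target of each lifted $2$-cell to be correct---and that the resulting $F$ is genuinely $2$-functorial. This is where the identification of ``$T$-algebra homomorphism'' with ``morphism of discrete $2$-fibrations'' really resides, and it leans on the precise correspondence between the action $M$ and the opcartesian lifts set up in the constructions above.
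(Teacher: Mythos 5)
Your proposal is correct and follows the paper's own proof essentially step for step: the paper likewise reconstructs a $2$-functor from an algebra homomorphism $H$ by lifting each $2$-cell of $\mathfrak E$ uniquely over its image with prescribed source, and gets bijectivity on arrows from the same uniqueness clause, with $2$-naturality automatic by local discreteness. The one step you flag as the main obstacle---that action-preservation forces the target of the lifted $2$-cell to be $Hg$---is precisely where the paper's proof does its only real work, carried out by factoring $f=\phi(Pf,Y)u$ and $g=\phi(Pg,Y)v$ through the chosen cartesian arrows, producing the vertical comparison $w$ with $wu=v$, transporting the opcartesian lift across $H$ via the homomorphism equation, and concluding by uniqueness together with splitting-preservation.
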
 
\begin{proof}  First, the functor is a bijection on objects.  Take a morphism $H\colon |P|\to |Q|$ of algebras.  This extends to a 2-functor $H\colon \mathfrak E\to\mathfrak G$ in the following way.  Take a 2-cell $\alpha\colon f\Rightarrow g\colon B\rightrightarrows C$ of $\mathfrak E$.  By the discrete opfibration condition, there is a 2-cell of $\mathfrak G$
$$\begin{tikzpicture}
\node(1){$HX$};
\node(2)[node distance=.7in, right of=1]{$\Downarrow \tilde \alpha$};
\node(3)[node distance=.7in, right of=2]{$HY$};
\draw[->,bend left](1) to node [above]{$Hf$}(3);
\draw[->,bend right,dashed](1) to node [below]{$\alpha_!f$}(3);
\end{tikzpicture}$$
above $P\alpha$ in $\mathfrak B$.  Clearly $H\alpha := \tilde \alpha$ should be the definition.  But the claim is that $Hg = \alpha_!f$ so that the source and target are respected.  Then $H$ will be a 2-functor by uniqueness assumptions.

Start by factoring $f = \phi(Pf,Y)u$ and $g = \phi(Pg,Y)v$ through their respective chosen cartesian arrows.  By the definition of the action on $|\mathfrak E|$, there is a vertical morphism $w\colon P(f)^*Y \to P(g)^*Y$ and a lift of $\alpha$ in $\mathfrak E$ of the form
$$\begin{tikzpicture}
\node(1){$P(f)^*Y$};
\node(2)[node distance=.4in, below of=1]{$$};
\node(3)[node distance=.4in, below of=2]{$P(g)^*Y$};
\node(4)[node distance=1.2in, right of=2]{$Y.$};
\node(5)[node distance=.4in, right of=2]{$\Downarrow \bar \alpha$};
\draw[->](1) to node [above]{$\;\;\;\;\;\phi(Pf,Y)$}(4);
\draw[->](3) to node [below]{$\;\;\;\;\;\phi(Pg,Y)$}(4);
\draw[->](1) to node [left]{$w$}(3);
\end{tikzpicture}$$ 
Note that $wu=v$ holds by the lifting property of $\phi(Pg,Y)$.  Now, by the fact that $H$ respects the actions on $|\mathfrak E|$ and $|\mathfrak G|$, there results a 2-cell as in the diagram
$$\begin{tikzpicture}
\node(1){$P(f)^*HY$};
\node(2)[node distance=.4in, below of=1]{$$};
\node(3)[node distance=.4in, below of=2]{$P(g)^*HY$};
\node(4)[node distance=1.2in, right of=2]{$HY$};
\node(5)[node distance=.4in, right of=2]{$\Downarrow H\bar\alpha$};
\draw[->](1) to node [above]{$\;\;\;\;\;\;\phi(Pf,HY)$}(4);
\draw[->](3) to node [below]{$\;\;\;\;\;\;\phi(Pg,HY)$}(4);
\draw[->](1) to node [left]{$Hw$}(3);
\end{tikzpicture}$$ 
which is a lift of $\alpha$ with target $\phi(Pg,HY)Hw$.  Therefore, pulling back $H\bar\alpha$ by $Hu$ there is a 2-cell $H\bar\alpha\ast Hu$ whose source is $Hf$.  Thus, by uniqueness, $\tilde \alpha$ must have the same target namely, $\phi(Pg,HY)Hw$.  But this arrow is $Hg$ since $H$ respects the splitting and is a functor.

That the functor is a bijection on arrows follows by the discrete opfibration assumption.  For the components of any morphism of the target (i.e. a 2-cell between morphism of algebras) defines a system between the corresponding 2-functors in the source that must indeed satisfy the 2-cell condition for 2-naturality by construction of the 2-cell assignment above and by the uniqueness clause of the discrete opfibration assumption.  \end{proof}

Now the main result of the section can be given.

\begin{theo}[Discrete 2-Fibrations are 2-Monadic] \label{MAIN THM2 Monadicity} There is a 2-equivalence
\[ \mathbf{D2Fib}(\mathfrak B) \simeq T\mathbf{Alg}
\]
for any 2-category $\mathfrak B$ with $T$ as above in Construction \ref{2monad construction}.
\end{theo}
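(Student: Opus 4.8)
The plan is to reduce the theorem to the standard recognition criterion for a 2-equivalence: a 2-functor is a 2-equivalence precisely when it is essentially surjective on objects and is locally an equivalence of hom-categories. Both conditions have in fact already been established in the strong form ``up to isomorphism'' by the two preceding propositions, so the proof is an assembly of existing results rather than a fresh construction, exactly paralleling the proof of the representation theorem (Theorem \ref{theorem duality for disc 2-fibrations}).

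First I would confirm that the object assignment $|-|\colon \mathbf{D2Fib}(\mathfrak B)\to T\mathbf{Alg}$, together with the local functors $|-|\colon \mathbf{D2Fib}(P,Q)\to T\mathbf{Alg}(|P|,|Q|)$ of Proposition \ref{algebra isomorphism remaining assignments}, genuinely organize into a 2-functor. This requires only compatibility with composition and identities, which is immediate because passage to underlying 1-categories strictly preserves composites and units of 2-functors, vertical and horizontal composites of 2-natural transformations, and modifications; the naturality in $P$ and $Q$ asserted in Proposition \ref{algebra isomorphism remaining assignments} is precisely the statement that these local functors respect horizontal composition. With this in hand, Proposition \ref{algebras obj assign essen surj} supplies essential surjectivity on objects up to isomorphism, via the pseudo-inverse object assignment $Q\mapsto\elt(F_Q)$ and the isomorphism of algebras $H\colon|\elt(F_Q)|\cong Q$, while Proposition \ref{algebra isomorphism remaining assignments} shows that each local functor is an isomorphism of categories, hence \emph{a fortiori} a local equivalence. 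The recognition criterion then forces $|-|$ to be a 2-equivalence.

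To make the equivalence explicit rather than merely abstract, I would promote $\elt(F_{-})$ to a 2-functor $T\mathbf{Alg}\to\mathbf{D2Fib}(\mathfrak B)$ using the hom-level bijections of Proposition \ref{algebra isomorphism remaining assignments}, take the family $H\colon|\elt(F_Q)|\cong Q$ as an invertible pseudo-natural (indeed 2-natural) counit, and take the companion isomorphisms $P\cong\elt(F_{|P|})$ coming from the representation theorem as the unit. The one genuine check here is that the counit $H$ is natural in the algebra, i.e. that it commutes with algebra morphisms and their 2-cells; this follows from the action-preservation of $H$ recorded in Proposition \ref{algebras obj assign essen surj} together with the observation that both sides of the relevant square are determined by the splitting data and pinned down by the uniqueness clauses of the local discrete opfibration condition. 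The main obstacle, such as it is, therefore lies entirely in the work already carried out in the two propositions; once those are granted, nothing substantive remains, and the 2-equivalence $\mathbf{D2Fib}(\mathfrak B)\simeq T\mathbf{Alg}$ follows.
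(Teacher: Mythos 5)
Your proposal is correct and follows exactly the paper's own route: the proof of Theorem \ref{MAIN THM2 Monadicity} in the paper consists precisely of combining Propositions \ref{algebras obj assign essen surj} and \ref{algebra isomorphism remaining assignments} via the standard recognition criterion (essentially surjective on objects plus locally an isomorphism, hence equivalence, on hom-categories). The extra checks you spell out --- 2-functoriality of $|-|$ and naturality of the counit $H$ in the algebra --- are left implicit in the paper but are consistent with, and a reasonable elaboration of, its argument.
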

\begin{proof}  Taken together, Propositions \ref{algebras obj assign essen surj} and \ref{algebra isomorphism remaining assignments} give the result.  \end{proof}

\subsection{Remarks on Monadicity of 2-Fibrations}

The development thus far prompts some reflections upon the possibility of showing that 2-fibrations are monadic as well.  This is really a subject for a separat paper, but some preliminary considerations can be given here.  The first point to notice is that a 2-fibration $P\colon \mathfrak E\to\mathfrak B$ admits an action from the whole 2-category $\sq(\mathfrak B)$ and not just the underlying 1-category.

\begin{construction}  Assignments yielding an action $M\colon \sq(\mathfrak B) \times_{\mathfrak B}\mathfrak E\to\mathfrak E$ are given in the following way.  Again assign
\[ (f\colon B\to PX,X)\mapsto f^*X
\]
on objects.  The arrow assignment uses the opcartesian lift as before.  That is, the pair $(\alpha, u)$ is sent to the dashed arrow as at the right in the diagram
$$\begin{tikzpicture}
\node(1){$B$};
\node(2)[node distance=1in, right of=1]{$PX$};
\node(3)[node distance=1in, below of=1]{$A$};
\node(4)[node distance=1in, below of=2]{$PY$};
\node(5)[node distance=.5in, right of=1]{$$};
\node(6)[node distance=.5in, below of=5]{$\Downarrow\alpha$};
\node(7)[node distance=.6in, right of=2]{$X$};
\node(8)[node distance=1in, below of=7]{$Y$};
\draw[->](1) to node [above]{$f$}(2);
\draw[->](1) to node [left]{$h$}(3);
\draw[->](2) to node [right]{$Pu$}(4);
\draw[->](3) to node [below]{$g$}(4);
\draw[->](7) to node [right]{$u$}(8);
\node(9)[node distance=2in, right of=7]{$f^*X$};
\node(10)[node distance=2in, right of=9]{$X$};
\node(11)[node distance=1in, below of=9]{$g^*Y$};
\node(12)[node distance=1in, below of=10]{$Y$};
\node(13)[node distance=1.3in, right of=9]{$$};
\node(14)[node distance=.4in, below of=13]{$\Downarrow \phi(\alpha,u\phi(f,X))$};
\node(15)[node distance=2in, right of=6]{$\mapsto$};
\draw[->](9) to node [above]{$\phi(f,X)$}(10);
\draw[->,dashed](9) to node [left]{$M(\alpha,u)$}(11);
\draw[->](10) to node [right]{$u$}(12);
\draw[->](11) to node [below]{$\phi(g,Y)$}(12);
\draw[->,bend right=20](9) to node [below]{$$}(12);
\end{tikzpicture}$$
The dashed arrow exists because the target of the chose 2-cell above $\alpha$ is over $gh$ and $\phi(g,Y)$ is of course cartesian over $g$.  The 2-cell assignment can be seen from the display above.  That is, a 2-cell of the domain of $M$ is really completely given by a pair of 2-cells $h\Rightarrow h'$ and $u\Rightarrow u'$.  Since the lifts of $\alpha$ corresponding to $u$ and $u'$ are opcartesian, there will be a unique lift of the composite 2-cell $gh\Rightarrow gh'$ between the targets of the lifts of $\alpha$.  The required 2-cell is then uniquely induced $M(\alpha,u)\Rightarrow M(\alpha',u')$ by the 2-cell lifting property of $\phi(g,Y)$.  This $M$ is an action in the required sense by the assumed splitting equations for $P$.
\end{construction}

The question, then, is whether we should expect every 2-functor $Q\colon \mathfrak A\to\mathfrak B$ to be a 2-fibration.  That this is so is confirmed in the proof of the next result.

\begin{prop}  Let $P\colon \mathfrak E\to\mathfrak B$ denote a 2-functor admitting a strict action $M\colon \sq(\mathfrak B)\times_{\mathfrak B}\mathfrak E\to\mathfrak E$.  It follows then that $P$ is a split 2-fibration.
\end{prop}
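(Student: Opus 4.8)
The plan is to construct a cleavage for $P$ directly from the action $M$ and then to verify each defining condition of Definition \ref{2fibn defn} together with the splitting equations of \S \ref{section splitting eqns}. First I would define the chosen cartesian arrow over a morphism $f\colon B\to PX$ exactly as in the proof of Lemma \ref{action is a fibration}, namely $\phi(f,X):=M(=,1_X)\colon f^*X\to X$, where $f^*X:=M(f,X)$ and $=$ denotes the degenerate square with top and left edge $f$ and right and bottom edge the identity. Passing to underlying $1$-categories, the action $M$ restricts to an action $|M|\colon |\sq(\mathfrak B)|\times_{|\mathfrak B|}|\mathfrak E|\to|\mathfrak E|$, which is precisely a $T$-algebra structure on $|P|$ for the $2$-monad $T$ of Construction \ref{2monad construction}. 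Hence Lemma \ref{action is a fibration} applies verbatim and shows that $|P|$ is a split fibration: this yields the $1$-cell lifting property of the arrows $\phi(f,X)$ and the fact that they split, giving the $1$-dimensional half of the first condition of Definition \ref{2fibn defn}.

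Second, I would upgrade these cartesian $1$-cells to genuine cartesian arrows by establishing the $2$-cell lifting property in the definition of cartesian arrow. Given a $2$-cell $\theta\colon g\Rightarrow k$ of $\mathfrak E$ lying over a $2$-cell $\gamma$ of $\mathfrak B$ that factors through $Pf$, the required unique lift $\hat\gamma$ is obtained by applying $M$ to the corresponding $2$-cell of $\sq(\mathfrak B)$ assembled from $\gamma$ and the identity data on the cartesian edge; uniqueness follows because $M$ is a strict $2$-functor and $\phi(f,X)$ is $|M|$-cartesian. For the local opfibration condition (the second clause of Definition \ref{2fibn defn}), given an arrow $u$ of $\mathfrak E$ and a $2$-cell $\alpha\colon Pu\Rightarrow g$ of $\mathfrak B$, I would take the opcartesian lift $\phi(\alpha,u)$ to be the image under $M$ of the $2$-cell of $\sq(\mathfrak B)$ determined by $\alpha$ on one edge and identities elsewhere. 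Its opcartesian universal property amid $2$-cells over $\alpha$ then transfers from the corresponding universal property of $2$-cells in $\sq(\mathfrak B)$ (Example \ref{2cat of squares example}) along the strict action $M$.

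Third, the remaining requirements are comparatively formal. Closure of opcartesian $2$-cells under horizontal composition (the third clause of Definition \ref{2fibn defn}) follows because $M$ preserves the horizontal composition of $2$-cells of $\sq(\mathfrak B)$, which is itself governed by the pasting of squares. Likewise all of the splitting equations of \S \ref{section splitting eqns}---namely \eqref{splitting 1cell} through \eqref{splitting identi 2cells}---reduce to the strict associativity, unit, and interchange laws for composition in $\sq(\mathfrak B)$, and these transfer to $\mathfrak E$ exactly because $M$ is strict. This is the precise $2$-dimensional analogue of the way the splitting of a $T$-algebra was deduced from the strict algebra equations in Lemma \ref{action is a fibration}, so these verifications are routine once the cleavage above is in place.

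I expect the main obstacle to be the $2$-cell bookkeeping in the second step. The $2$-cells of $\sq(\mathfrak B)$ are not single $2$-cells of $\mathfrak B$ but pairs $(\gamma,\delta)$ satisfying the cylinder-type compatibility equation of Example \ref{2cat of squares example}, so matching the data of a prospective cartesian or opcartesian lift to an actual $2$-cell of $\sq(\mathfrak B)$, and then checking that the transported lift enjoys the correct universal property with the required uniqueness, demands careful tracking of sources, targets, and that compatibility condition. Once the correct $2$-cells of $\sq(\mathfrak B)$ are identified, strictness of $M$ does all the remaining work.
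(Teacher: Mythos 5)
Your proposal is correct and takes essentially the same route as the paper's own proof: you build the cleavage from the action exactly as in Lemma \ref{action is a fibration} (the degenerate squares giving the cartesian 1-cells, with the underlying $T$-algebra structure on $|P|$ handling the 1-dimensional part), obtain the local opcartesian 2-cells as $M$-images of the 2-cell of $\sq(\mathfrak B)\times_{\mathfrak B}\mathfrak E$ assembled from $\alpha$ and identities, and reduce the splitting equations and closure of opcartesian 2-cells under horizontal composition to strictness of $M$ and the composition laws of $\sq(\mathfrak B)$. Your closing caution about matching the pair-of-2-cells (cylinder) data in $\sq(\mathfrak B)$ is precisely the bookkeeping the paper's terse argument glosses over.
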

\begin{proof}  The required cartesian arrow above a morphism $f\colon B\to PX$ of $\mathfrak B$ is given in the same way as in the proof of Lemma \ref{action is a fibration}.  The 2-cell lifting condition follows from the construction of 2-cells in $\sq(\mathfrak B)$.  What needs to be proved is that $P$ is locally a split opfibration.  Take an arrow $u\colon X\to Y$ of $\mathfrak E$ and a 2-cell $\alpha\colon Pu\Rightarrow g$ of $\mathfrak B$.  Consider the 2-cell of $\sq(\mathfrak B)\times_{\mathfrak B}\mathfrak E$ as indicated by the equality
$$\begin{tikzpicture}
\node(1){$PX$};
\node(2)[node distance=1.4in, right of=1]{$PX$};
\node(3)[node distance=1in, below of=1]{$PY$};
\node(4)[node distance=1in, below of=2]{$PY$};
\node(5)[node distance=.5in, right of=1]{$$};
\node(6)[node distance=.25in, above of=5]{$$};
\node(7)[node distance=.5in, below of=5]{$\Downarrow \alpha$};
\node(8)[node distance=.9in, right of=7]{$\mathclap{\substack{1_{Pu} \\ \Leftarrow }}$};
\node(9)[node distance=2.3in, right of=7]{$=$};
\node(10)[node distance=2.5in, right of=2]{$PX$};
\node(11)[node distance=1.4in, right of=10]{$PX$};
\node(12)[node distance=1in, below of=10]{$PY$};
\node(13)[node distance=1in, below of=11]{$PY$};
\node(14)[node distance=4.3in, right of=7]{$=$};
\node(15)[node distance=.75in, below of=14]{$$};
\node(16)[node distance=.9in, left of=14]{$\mathclap{\substack{\alpha \\ \Leftarrow }}$};
\node(17)[node distance=.5in, right of=2]{$X$};
\node(18)[node distance=1in, below of=17]{$Y$};
\node(19)[node distance=.5in, right of=11]{$X$};
\node(20)[node distance=1in, below of=19]{$Y$};
\draw[->](1) to node [above]{$1_{PX}$}(2);
\draw[->](1) to node [left]{$g$}(3);
\draw[->,bend left](2) to node [right]{$Pu$}(4);
\draw[->,bend right](2) to node [left]{$Pu$}(4);
\draw[->](3) to node [below]{$1_{PY}$}(4);
\draw[->](10) to node [above]{$1_{PX}$}(11);
\draw[->,bend left](10) to node [right]{$Pu$}(12);
\draw[->,bend right](10) to node [left]{$g$}(12);
\draw[->](11) to node [right]{$Pu$}(13);
\draw[->](12) to node [below]{$1_{PY}$}(13);
\draw[->,bend left](17) to node [right]{$u$}(18);
\draw[->](19) to node [right]{$u$}(20);
\end{tikzpicture}$$
The image of this arrow under $M$ in $\mathfrak E$ is the chosen opcartesian 2-cell with domain $u$ above $\alpha$.  That it has the required lifting property follows from the definition of vertical composition of 2-cells in $\sq(\mathfrak B)$ and the fact that $M$ is well-defined.  That horizontal composition preserves opcartesian 2-cells follows from the definition.  \end{proof}

\begin{remark}  Notice that the lax structure encoded in $\sq(\mathfrak B)$ is really needed to give the opcartesian 2-cell in showing that the 2-functor $P\colon \mathfrak E\to\mathfrak B$ is a 2-fibration.  That is, if $P$ only admit an action of the cotensor arrow 2-category, having only commutative squares as its morphisms, then it is not clear that $P$ would be a 2-fibration in the present sense.  The result suggests that split 2-fibrations are 3-algebras for some 3-monad on some slice of 2-categories given by pulling by by $\tgt\colon \sq(\mathfrak B)\to\mathfrak B$, but this will have to wait for a separate treatment.
\end{remark}

\begin{remark}  It seems that with different choices of convention on the combination of `$op$' and `$co$' one would obtain actions of different 2-categories of squares associated to $\mathfrak B$.  It seems not-`$op$' vs. `$op$' is the difference between globally an opfibration vs. globally a fibration, which is also the difference between admitting a right vs. a left action of some kind of cotensor object.  The difference between not-`$co$' vs. `$co$' is that $P$ in the former case is locally a split opfibration vs. on the other hand is locally a split fibration, which seems to be the same as admitting an action from the lax comma square $\sq(\mathfrak B)$ vs. admitting one from the oplax comma square associated to $1_{\mathfrak B}$. 
\end{remark}

\section{Prospectus}

The paper closes here with some speculation about further avenues of inquiry.

\subsection{Internalization}

Returning to the discussion of the introduction, we are left with a difficult question about the approach to be taken toward internalizing the notion of a discrete 2-fibration in some higher topos.  Again the idea is to boost the internalization results for flat set-valued functors achieved by Diaconescu in \cite{DiaconescuChangeOfBase} and \cite{DiaconescuThesis} into the next highest dimension, giving an elementary account of flatness for something like 2- and pseudo-functors.  Discrete fibrations were encoded in dimension 1 as algebras for an action of the cotensor of the base category with $\mathbf 2$.  What has been discovered here, however, is that discrete 2-fibrations are algebras for a the action of a structure $\sq(\mathfrak B)$ that is not a cotensor is the expected venue, namely, $\mathbf{2Cat}$, but in the more fiddly 3-dimensional structure $\mathbf{Lax}$.

The approach that was taken in \cite{LambertThesis} was to axiomatize the idea of a 2-category internal to a given 1-category $\E$.  This included the definition of internal ``hom-categories" making sense of how to discuss concepts defined for internal 2-categories ``locally."  A discrete 2-fibration over a fixed base internal 2-category was then defined to be an internal 2-functor (1) whose underlying internal 1-functor is an algebra for the action of the internal arrow category of the underlying 1-category of the base; and (2) that is locally a discrete opfibration in the sense of being an algebra for an internalization of the action as in the opening of \S \label{classical case of monadicity}.  This approach worked to given an elementary version of the desired flatness results but was technically complicated in a way that fundamentally muddied what should have been a clear and elegant picture.  The question, then, is whether the results of this paper give any insight into the possibility of a more straightforward elementary axiomatization of the setting in which the flatness results should be achieved.

It is not clear that this is the case.  At least what one expects is that, just as the 1-dimensional flatness results were axiomatized in the internal category theory of a topos, the 2-dimensional flatness results should appear internally in some kind of 2-topos.  The 2-fibration concepts introduced in \cite{Hermida} and \cite{Buckley} that were considered here are not representable.  The representable concepts appear in \S I,2.9 of \cite{GrayFormalCats}, where a 2-fibration is defined using a certain ``Chevalley condition" via the cotensor with $\mathbf 2$ of the base.  It is asserted there without proof that such 2-fibrations correspond in a strong way with 2-category-valued 2-functors on the base.  The point is that the representable discrete 2-fibration concept would be the discretization of Gray's notion and again given by an action of the ordinary cotensor.  This would perhaps be the correct notion of discrete fibration to internalize in a 2-topos as in \cite{Weber}, which is already a reasonable well-established categorification of the idea of an ordinary elementary topos.  The issue, however, is again that the fibration concepts considered here are not the representable ones and the action of the cotensor object giving the algebra structure is not that of the cotensor in $\mathbf{2Cat}$, but rather $\mathbf{Lax}$, a much less studied 3-dimesional structure.  It is neither clear how ``2-toposy" $\mathbf{Lax}$ really is nor whether the fibration concepts studied here are the end of the story.

\subsection{Fibration Concepts}

An interesting pattern is suggested by the developments of the paper.  The main result, Theorem \ref{theorem duality for disc 2-fibrations}, and all those results summarized in the introduction, generally speaking, take the folllowing form: correspondences between a structured class of ``geometric data" -- that is, (discrete) fibrations -- on the one hand and a structured class of representations of some gadget in a base structure of which the gadget is either (A) of the same status (i.e. presheaves representing a category in the category of sets) or (B) a member (i.e. pseudo-functors representing a category as parameterized categories).  That is, on the one hand, there is a representing structure -- perhaps some kind of $n$-category -- denoted here by $\mathcal K$ and a higher $(n+1)$-structure, denoted by $\cat(\mathcal K)$ of which $\mathcal K$ is a member (think $\mathbf{Set}$ and $\cat$ or $\cat$ and $\twocat$).  There is a represented object of $\cat(\mathcal K)$ -- some $n$-category -- denoted by $\mathbb B$.  There is a ``hom-object" in $\cat(\mathcal K)$ of the same overall structure as $\mathcal K$ of representations of $\mathbb B$, denoted by $[\mathbb B,\mathcal K]$ and a higher class of representations $[\mathbb B,\cat(\mathcal K)]$ of the same overall status and structure as $\cat(\mathcal K)$.  There is an inclusion of representations $[\mathbb B,\mathcal K]\to [\mathbb B,\cat(\mathcal K)]$ where those of the source are thought of as the ``discrete representations" relative to those of the target since $\mathcal K\to \cat(\mathcal K)$ is the inclusion into the ambient $(n+1)$-structure of the $(n+1)$-discrete structures (i.e. precisely the members of $\mathcal K$).  A category of elements construction then establishes correspondences with geometric structures on the other side of $n$ and $(n+1)$-equivalences making the whole following situation commute:
$$\begin{tikzpicture}
\node(1){$\mathbf{Fib}(\mathbb B)$};
\node(2)[node distance=1.2in, right of=1]{$[\mathbb B,\cat(\mathcal K)]$};
\node(3)[node distance=.7in, below of=1]{$\mathbf{DFib}(\mathbb B)$};
\node(4)[node distance=.7in, below of=2]{$[\mathbb B,\mathcal K].$};
\node(5)[node distance=.5in, right of=1]{$$};
\node(6)[node distance=.4in, below of=5]{$$};
\draw[->](1) to node [above]{$\simeq$}(2);
\draw[->](3) to node [left]{$\mathrm{incl}$}(1);
\draw[->](4) to node [right]{$\mathrm{incl}$}(2);
\draw[->](3) to node [below]{$\simeq$}(4);
\end{tikzpicture}$$
The question, then, is given examples of at least one of the representation ``hom-structures" in $[\mathbb B,\mathcal K]\to [\mathbb B,\cat(\mathcal K)]$, what is the (discrete) fibration concept on the other side of some such equivalence?  That is, what is the corresponding lifting property of some structure-preserving $n$-functor into the represented stucture $\mathbb B$.  Insofar as there are functor-category structures other than those considered so far, this is a potentially fruitful area of inquiry.  For there are plenty of 2- and 3-dimensional representing structures such as bicategories of profunctors or of relations; 2-categories with lax functors or lax natural transformations; double categories of sets or of profunctors; higher $n$-categories -- all of which have well-established associated functor categories and thus potentially corresponding fibration concepts waiting to be discovered.  In fact this outline may be interesting even in considering lower-level representations of classical and well-known algebraic gadgets such as groups, rings, modules and their higher-dimensional analogues such as 2-groups and 2-rigs. 

\section{Acknowledgments}

Some of the work of the paper was included in the author's thesis \cite{LambertThesis}.  That research was supported by the NSERC Discovery Grant of Dr. Dorette Pronk at Dalhousie University and by NSGS funding through Dalhousie University.  The author would like to thank Dr. Pronk for supervising the research and for numerous conversations about the much of the content of the paper.  The author would also like to thank Dr. Pieter Hofstra, Dr. Robert Par\'e and Dr. Peter Selinger for comments and suggestions on an earlier version of the work in the paper.

\bibliography{research}
\bibliographystyle{alpha}

\end{document}